\documentclass[reqno,10pt]{amsart}    

\title[Mean curvature flow in higher codimension]{Mean curvature flow in higher codimension\\ - Introduction and survey -}
\author[Knut Smoczyk]{\sc Knut Smoczyk}
\hyphenation{Ma-the-ma-tik}
\hyphenation{Welfen-gar-ten}
\hyphenation{Hannover}
\address{
Leibniz Universit\"at Hannover,
Institut f\"ur Differentialgeometrie,    
Welfengarten 1,
30167 Hannover,
Germany}
\email{smoczyk@math.uni-hannover.de} 
\thanks{This survey is a contribution within the framework of the priority program ``Globale Differentialgeometrie", DFG-SPP 1154, supported by the German Science Foundation (DFG)}
  
\subjclass[2000]{Primary 53C44; 
}   
\keywords{introduction, survey, mean curvature flow, codimension, Lagrangian}
\date{\today} 

\usepackage{amscd,amsfonts,mathrsfs,amsthm,enumerate}
\usepackage{amssymb, amsmath}          
\usepackage[alphabetic, sorted-cites]{amsrefs} 
\parindent = 0 mm      
\hfuzz     = 6 pt      
\parskip   = 3 mm      


\def\real     #1{{\mathbb R^{#1}}}
\def\complex  #1{{\mathbb C^{#1}}}

\def\dd       #1#2#3{{#1}_{#2#3}}
\def\ddd      #1#2#3#4{{#1}_{#2#3#4}}
\def\dddd     #1#2#3#4#5{{#1}_{#2#3#4#5}}

\def\uu       #1#2#3{{#1}^{#2#3}}
\def\uuu      #1#2#3#4{{#1}^{#2#3#4}}
\def\uuuu     #1#2#3#4#5{{#1}^{#2#3#4#5}}
\def\ud       #1#2#3{{#1}^{#2}_{\phantom{#2}{#3}}}
\def\du       #1#2#3{{#1}_{#2}^{\phantom{#2}{#3}}}
\def\udu      #1#2#3#4{{#1}^{{#2}\phantom{#3}{#4}}_{\phantom{#2}{#3}}}

\def\ddu      #1#2#3#4{{#1}_{#2#3}^{\phantom{#2#3}{#4}}}
\def\udd      #1#2#3#4{{#1}^{#2}_{\phantom{#2}{#3#4}}}
\def\duu      #1#2#3#4{{#1}_{#2}^{\phantom{#2}{#3#4}}}

\def\uddd     #1#2#3#4#5{#1^#2_{\phantom{#2}#3#4#5}}

\def\dddu     #1#2#3#4#5{#1_{#2#3#4}^{\phantom{#2#3#4}#5}}

\def\dudu     #1#2#3#4#5{#1_{#2\phantom{#3}#4}^
              {\phantom{#2}#3\phantom{#4}#5}}

\def\dt       {\frac{d}{dt}\,}

\def\equationcolor {\color{black}}
\def\textcolor     {\color{black}}

\def\bcoleq    {\begin{equation}\equationcolor}
\def\ecoleq    {\textcolor\end{equation}}

\def\bcoleqn   {\equationcolor\begin{eqnarray}}
\def\ecoleqn   {\end{eqnarray}\textcolor}


\newtheorem{theorem}{Theorem}[section]   
\newtheorem{lemma}[theorem]{Lemma}   
\newtheorem{corollary}[theorem]{Corollary}   
\newtheorem{proposition}[theorem]{Proposition}   
\newtheorem{remark}[theorem]{Remark}   
  
\theoremstyle{definition}   
\newtheorem{definition}[theorem]{Definition}   
\newtheorem{example}[theorem]{Example}

\newcommand{\bfig}{\begin{figure}}
\newcommand{\efig}{\end{figure}}

\makeatletter
\def\pproof#1{\@ifnextchar[\opargproof
{\opargproof[\it Proof of #1.]}}
\def\opargproof[#1]{\par\noindent {\bf #1 }}

\makeatother

\begin{document}
\begin{abstract}
In this text we outline the major techniques, concepts and results in mean curvature flow with a focus
on higher codimension. In addition we include a few novel results and some material that cannot be found elsewhere.
\end{abstract}
\maketitle

\section{Mean curvature flow}
Mean curvature flow is perhaps the most important geometric evolution equation of submanifolds in Riemannian manifolds. Intuitively, a family of smooth submanifolds evolves under mean curvature flow, if the velocity at each point of the submanifold is given by the mean curvature vector at that point. For example, round spheres in euclidean space
evolve under mean curvature flow while concentrically shrinking inward until they collapse in finite time to a single point, the common center of the spheres.

Mullins \cite{Mullins56} proposed mean curvature flow to model the formation of grain boundaries in annealing metals. Later the evolution of submanifolds by their mean curvature has been studied by
Brakke \cite{Brakke} from the viewpoint of geometric measure theory. Among the first authors
who studied the corresponding nonparametric problem were Temam \cite{Temam} in the late 1970's
and Gerhardt \cite{Gerhardt} and Ecker \cite{Ecker82} in the early 1980's.
Pioneering work was done by Gage \cite{Gage84}, Gage \& Hamilton \cite{Gage-Hamilton86} and Grayson \cite{Grayson87} 
who proved that the curve shortening flow (more precisely,  the ``mean"
curvature flow of curves in $\real{2}$) shrinks embedded closed curves to ``round" points. In his seminal paper Huisken   \cite{Huisken84} proved that closed convex hypersurfaces in euclidean space $\real{m+1}, m>1$ contract to single round points in finite time (later he extended his
result to hypersurfaces in Riemannian manifolds that satisfy a suitable stronger convexity, see \cite{Huisken86}). Then, until the mid 1990's, most authors who studied mean curvature flow mainly considered hypersurfaces, both in euclidean and Riemannian manifolds, whereas mean curvature flow in higher codimension did not play a great
role. There are various reasons for this, one of them is certainly the much different geometric situation of submanifolds in higher codimension
since the normal bundle and the second fundamental tensor are more
complicated. But also the analysis becomes more involved
and the algebra of the second fundamental tensor is much more subtle since for hypersurfaces there usually exist more
scalar quantities related to the second fundamental form
than in case of submanifolds in higher codimension. Some
of the results previously obtained for mean curvature flow of hypersurfaces carry
over without change to submanifolds of higher codimension but many do not and
in addition even new phenomena occur.

Among the first results in this direction are the results on mean
curvature flow of space curves by Altschuler and Grayson \cites{Altschuler91, Altschuler-Grayson92}, 
measure-theoretic approaches to higher codimension mean curvature flows
by Ambrosio \& Soner \cite{Ambrosio-Soner97},  existence and convergence results 
for the Lagrangian mean curvature flow \cites{Smoczyk96, Smoczyk99, Smoczyk02, Thomas-Yau02}, mean curvature flow of symplectic surfaces in codimension two \cites{Chen-Li02, Wang02}
and long-time existence and convergence results of graphic mean curvature flows in
higher codimension \cites{Chen-Li-Tian02, Smoczyk-Wang02, Smoczyk04, Wang02, Xin08}. Recently there has been done quite some work on the formation
and classification of singularities in mean curvature flow
\cites{Anciaux, Castro-Lerma09, Chau-Chen-He09, Clutterbuck-Schnuerer-Schulze07, Colding-Minicozzi09, GSSZ07, Han-Li09, Huisken-Sinestrari09, Joyce-Lee-Tsui08,  Le-Sesum10, Le-Sesum10b, Liu-Xu-Ye-Zhao, Schoen-Wolfson03}, partially motivated
by Hamilton's and Perelman's \cites{Hamilton93, Perelman02, Perelman03a, Perelman03b} work on the Ricci flow that in many ways behaves akin to
the mean curvature flow and vice versa.

The results in mean curvature flow can be roughly grouped into two categories: The first category contains results that hold (more or less) in general,
i.e. that are independent of dimension, codimension or the ambient space.
In the second class we find results that are adapted to more specific geometric situations, like results for hypersurfaces, Lagrangian or symplectic submanifolds, graphs, etc..

Our aim in this article is twofold. We first want to summarize the most important properties of
mean curvature flow that hold in any dimension,
codimension and ambient space (first category). In the second part of this exposition we will give a - certainly
incomplete and not exhaustive -, overview on more specific results in higher codimension, like an overview on the Lagrangian mean curvature flow or the mean curvature flow of graphs (part of the second category). Graphs and Lagrangian submanifolds certainly
form the best understood subclasses of mean curvature flow in higher codimension.

 In addition this article is intended as an introduction to 
mean curvature flow for the beginner and we will derive the most relevant geometric structure and evolution equations
in a very general but consistent form that is rather hard to find in the literature.
However, there are several nice monographs on mean curvature flow, a well written introduction to the regularity of mean curvature flow of hypersurfaces is 
\cite{Ecker04}. For the curve shortening flow see \cite{Chou-Zhu01}. For mean curvature flow in higher codimension there exist some lecture notes by Wang \cite{Wang08b}.

Let us now turn our attention to the mathematical definition of mean curvature flow.
Suppose $M$ is a differentiable manifold of dimension $m$, $T>0$ a real number and $F:M\times[0,T)\to (N,g)$ a smooth time dependent family of immersions of $M$ into a Riemannian manifold $(N,g)$ of dimension $n$, i.e. $F$ is smooth and each 
\[F_t:M\to N\,,\quad F_t(p):=F(p,t)\,,\quad t\in[0,T)\]
is an immersion. If $F$ satisfies the evolution equation
\begin{gather}\label{mcf}
\frac{d F}{dt}(p,t)=\overrightarrow H(p,t)\,,\quad\forall p\in M, t\in[0,T)\,,\tag{MCF}
\end{gather}
where $\overrightarrow H(p,t)\in T_{F(p,t)}N$ is the mean curvature vector of the immersion $F_t$ at
$p$ (or likewise of the submanifold $U_t:=F_t(U)$ at $F_t(p)$, if for some $U\subset M$, $F_{t|U}$ is an embedding), then we say that
$M$ evolves by mean curvature flow in $N$ with initial data $F_0:M\to N$. 
As explained in section \ref{sec mean}, the mean curvature vector field can be defined for any immersion into a Riemannian manifold (or more generally for any space-like immersion into a pseudo-Riemannian manifold; in this survey we will restrict to
the Riemannian mean curvature flow) and it is the negative $L^2$-gradient of the volume functional $\operatorname{vol}:\mathscr{I}\to\real{}$ on the space $\mathscr{I}$ of immersions of $M$ into $(N,g)$.
Hence mean curvature flow is the steepest descent or negative $L^2$-gradient flow of the volume functional and formally equation (\ref{mcf}) makes sense for any immersed submanifold in a Riemannian manifold.  Therefore, following Hadamard, given an initial immersion $F_0:M\to N$ one is interested in the well-posedness of equation (\ref{mcf}) in the sense of
\begin{enumerate}[I.)]
\item
Does a solution exist?
\item
Is it unique?
\item
Does it behave continuously in some suitable topology?
\end{enumerate}
In addition, once short-time existence is established on some maximal time interval $[0,T), T\in(0,\infty]$, one wants to study the behavior of the flow
and in particular of the evolving immersed submanifolds $M_t:=F_t(M)$ as $t\to T$. Either singularities
of some kind will form and one might then study the formation of singularities in more details - with
possible significant geometric implications - or the flow has a long-time solution. In such a case
convergence to some nice limit (e.g. stationary, i.e. a limit with vanishing mean curvature) would be rather expected but in general will not hold a-priori.

In the most simplest case, i.e. if the dimension of $M$ is one, mean curvature flow is called curve shortening flow. In many contributions to the theory of mean curvature flow one assumes that $M$ is a smooth {\sl closed} manifold. The reason is, that one key technique in mean curvature flow (or more generally in the theory of parabolic geometric evolution equations) is the application of the maximum principle and in absence of compactness the  principle of ``first time violation" of a stated
inequality simply does not hold. But even for complete non-compact submanifolds there are
powerful techniques, similar to the maximum principle, that can be applied in
some situations. In the complete case one of the most important tools is the monotonicity formula found by
Huisken \cite{Huisken90}, Ecker \& Huisken \cite{Ecker-Huisken89} and Hamilton \cite{Hamilton93a}
and that equally well applies to mean curvature flow in higher codimension. Ecker \cite{Ecker01}
proved a beautiful local version of the
monotonicity formula for hypersurfaces
and another local monotonicity for evolving Riemannian
manifolds has been found recently by Ecker, Knopf, Ni and Topping \cite{Ecker-Knopf-Ni-Topping08}.

There are some very important contributions  to the regularity theory of mean curvature flow by White \cites{White05, White09}
that apply in all codimensions. For example in \cite{White05} he proves uniform curvature bounds 
of the euclidean mean curvature flow in regions of space-time where the Gaussian density ratios are close to $1$. With this result one can often 
exclude finite time singularities and prove long-time existence of the flow (see for example \cites{Wang02, Medos-Wang09}).

For simplicity and since some techniques and results do not hold for complete non-compact manifolds we will always assume in this article, unless otherwise agreed, that $M$ is an oriented {\sl closed} smooth manifold.

The organization of the survey is as follows: In section \ref{sec structure} we will review the geometric structure equations for immersions in Riemannian manifolds and we will introduce
most of our terminology and notations that will be used throughout the paper.
In particular we will mention the explicit formulas in the case of Lagrangian submanifolds in K\"ahler-Einstein manifolds.
For most computations we will use the Ricci calculus and apply the Einstein convention to sum over repeated indices. 
In section \ref{sec general} we will 
summarize those results that hold in general (first category).
The section is subdivided into four subsections. In the first
subsection \ref{sec existence} we will show that the mean curvature flow is 
a quasilinear (degenerate) parabolic system and we will treat the existence and
uniqueness problem.
In subsection \ref{sec evolution} we derive the evolution equations of the most important 
geometric quantities in the general situation, i.e. for immersions
in arbitrary Riemannian manifolds. In this general form these
formulas are hard to find in the literature and one can later
easily derive all related evolution equations from them that
occur in special situations like evolution equations for tensors 
that usually
appear in mean curvature flow of hypersurfaces, Lagrangian submanifolds or graphs. 
In subsection \ref{sec longtime} we recall general results concerning long-time existence of solutions. In the final subsection
\ref{sec singularities} of this section we explain the two types
of singularities that appear in mean curvature flow and discuss
some rescaling techniques. Moreover we will recall some of the results that have been obtained in the classification of solitons.
Section \ref{sec special} is on more specific results in higher codimension, the first subsection treats the Lagrangian mean
curvature flow and in the last and final subsection of this
article we give an overview of the results in mean
curvature flow of graphs.

\section{The geometry of immersions}\label{sec structure}
\subsection{Second fundamental form and mean curvature vector}\label{sec mean}~

In this subsection we recall the definition of the second fundamental form and mean curvature vector
of an immersion and we will introduce most of our notation.

Let $F:M\to (N,g)$ be an immersion of an $m$-dimensional differentiable manifold $M$ into
a Riemannian manifold $(N,g)$ of dimension $n$, i.e. $F$ is smooth and the pull-back $F^*g$ defines a Riemannian metric on $M$.  The number $k:=n-m\ge 0$ is called the codimension of the
immersion. 

For $p\in M$ let
$$T_p^\perp M:=\{\nu\in T_{F(p)}N:g(\nu,DF_{|p}(W))=0,\,\forall W\in T_pM\}$$
denote the normal space of $M$ at $p$ and $T^\perp M$ the associated normal bundle.
By definition, the normal bundle of $M$ is a sub-bundle of rank $k$ of the pull-back bundle $F^*TN=\bigcup_{p\in M}T_{F(p)}N$ over $M$. Using the differential of $F$ we thus have a splitting
$$T_{F(p)}N=DF_{|p}(T_pM)\oplus T_p^\perp M\,.$$
The differential $DF$ can be considered as a $1$-form on $M$ with values in $F^*TN$, i.e.
\begin{align}
DF\in\Gamma(F^*TN\otimes T^*M)=:\Omega^1(M,F^*TN)\,,\\\nonumber
\quad T_pM\ni V\mapsto DF_{|p}(V)\in T_{F(p)}N\,.\nonumber
\end{align}

The Riemannian metric $F^*g$ is also called the first fundamental form on $M$. 
In an obvious way the metrics $g$ and $F^*g$ induce Riemannian metrics on all bundles formed
from products of $TM,T^*M, T^\perp M, F^*TN, TN,$ and $T^*N$ and in the sequel we will
often denote all such metrics simply by the usual brackets $\langle\cdot,\cdot\rangle$ for an
inner product.

Similarly the Levi-Civita connection $\nabla$ on $(N,g)$ induces connections on the bundles
$TM, T^*M, T^\perp M, F^*TN$ and products hereof. Since the precise definition of these connections will be crucial in the understanding of the second fundamental form, the mean curvature vector and later also of the evolution equations, we will briefly recall them. The
connection $\nabla^{TM}$ on $TM$ can be obtained in two equivalent ways: either as the Levi-Civita connection of the induced metric $F^*g$ on $TM$ or else by projection of the ambient connection
to the tangent bundle, more precisely via the formula
$$DF(\nabla^{TM}_XY):=\nabla^\top_{DF(X)}\overline{DF(Y)}\,,\quad X,Y\in TM\,,$$
where ${}^\top$ denotes the projection onto $DF(TM)$
and $\overline{DF(Y)}$ is an arbitrary (local) smooth extension of $DF(Y)$.
The connection $\nabla^{T^*M}$
on $T^*M$ is then simply given by the dual connection of $\nabla^{TM}$.
Similarly one obtains the connection $\nabla^{F^*TN}$ on $F^*TN$ via the formula
$$\nabla^{F^*TN}_XV:=\nabla_{DF(X)}\overline{V}\,,$$
for any smooth section $V\in\Gamma(F^*TN)$ and finally the connection $\nabla^\perp$ on
the normal bundle is given by projection
$$\nabla^\perp_X\nu:=\left(\nabla^{F^*TN}_X\nu\right)^\perp$$
for $\nu\in\Gamma(T^\perp M)\subset\Gamma(F^*TN)$. Since the connections $\nabla^{TM}$, $\nabla^{T^*M}$, $\nabla^{F^*TN}$ and their associated product connections on product bundles
over $M$ formed from the factors $TM, T^*M, F^*TN$ are induced
by $\nabla$, it is common (and sometimes confusing) to denote all of them by the same symbol $\nabla$.
Since $T^\perp M$ is a sub-bundle of $F^*TN$,
one can consider a section $\nu\in\Gamma(T^\perp M)$ also as an element of $\Gamma(F^*TN)$
and hence one can apply both connections $\nabla^\perp$ and $\nabla=\nabla^{F^*TN}$ to
them, i.e. we 
will write $\nabla_X\nu$ ($=\nabla_X^{F^*TN}\nu$), if we consider $\nu$ as a section in 
$F^*TN$ and $\nabla^\perp_X\nu$, if $\nu$ is considered as a section in the normal bundle
$T^\perp M$.  The same holds, if we consider sections in product
bundles that contain $T^\perp M$ as a factor.

If we apply the resulting connection $\nabla$ on $F^*TN\otimes T^*M$ to $DF$, we obtain - by
definition - the second fundamental tensor
$$A:=\nabla DF\in\Gamma(F^*TN\otimes T^*M\otimes T^*M)\,.$$
It is then well-known that the second fundamental tensor is symmetric
\begin{equation}\label{sec 1}
A(X,Y)=(\nabla_XDF)(Y)=(\nabla_YDF)(X)=A(Y,X)
\end{equation}
and normal in the sense that
\begin{equation}\label{sec 2}
\langle A(X,Y),DF(Z)\rangle=0\,,\quad\forall X,Y,Z\in TM\,.
\end{equation}
Therefore in particular $A\in\Gamma(T^\perp M\otimes T^*M\otimes T^*M)$.

Taking the trace of $A$ gives the mean curvature vector field
\begin{equation}\label{sec 3}
\overrightarrow H:=\operatorname{trace}A=\sum_{i=1}^mA(e_i,e_i)\,,
\end{equation}
where $(e_i)_{i=1,\dots,m}$ is an arbitrary orthonormal frame of $TM$. Hence, since
$A$ is normal, we obtain a canonical section $\overrightarrow H\in\Gamma(T^\perp M)$ in the normal bundle of the immersion $F:M\to N$.

\subsection{Structure equations}~

The second fundamental tensor is a curvature quantity that determines how curved the immersed
submanifold $F(M)$ given by an immersion $F:M\to N$ lies within the ambient manifold $(N,g)$.
According to this we have a number of geometric equations that relate the second fundamental
tensor to the intrinsic curvatures of $(M,F^*g)$ and $(N,g)$. 

Let $\nabla$ be a connection on a vector bundle $E$ over a smooth manifold $M$. Our convention for the curvature tensor $R^{E,\nabla}\in\Omega^2(M,E)$ w.r.t. $\nabla$ is
$$R^{E,\nabla}(X,Y)\sigma:=(\nabla_X\nabla_Y-\nabla_Y\nabla_X-\nabla_{[X,Y]})\sigma\,,\quad\forall X,\,Y\in TM,\sigma\in\Gamma(E)\,.$$
Moreover, if $E$ is a bundle with bundle metric $\langle\cdot,\cdot\rangle$, then we set
$$R^{E,\nabla}(\mu,\sigma,X,Y):=\langle\mu,R^{E,\nabla}(X,Y)\sigma\rangle\,,\quad\forall X,Y\in TM,\,\sigma,\mu\in E\,.$$

We denote the curvature tensors $R^{TM,\nabla}$ and $R^{TN,\nabla}$ by
$R^M$ resp. $R^N$. Letting
$$(\nabla_XA)(Y,V):=\nabla_X(A(Y,V))-A(\nabla_XY,V)-A(Y,\nabla_XV)$$
the Codazzi equation is
\begin{eqnarray}
&&(\nabla_XA)(Y,V)-(\nabla_YA)(X,V)\nonumber\\
&&=R^N(DF(X),DF(Y))DF(V)-DF(R^M(X,Y)V)\,.\label{struc 2}
\end{eqnarray}
Note that $\nabla$ denotes the full connection, i.e. here we consider $A$ as a section
in $F^*TN\otimes T^*M\otimes T^*M$ and not in $T^\perp M\otimes T^*M\otimes  T^*M$.
Later we will sometimes consider $A$ as a section in $T^\perp M\otimes T^*M\otimes  T^*M$
and then we will also use the connection on the normal bundle instead, so that in this case we
write $(\nabla^\perp_XA)(Y,V)=\left((\nabla_XA)(Y,V)\right)^\perp$. In terms of $\nabla^\perp$
the Codazzi equation becomes
\begin{equation}\label{struc 2b}
(\nabla^\perp_XA)(Y,V)-(\nabla^\perp_YA)(X,V)=\left(R^N(DF(X),DF(Y))DF(V)\right)^\perp.
\end{equation}

From
$$\langle A(Y,V),DF(W)\rangle =0\,,\quad\forall Y,V,W\in TM$$
we get
\begin{equation}\label{struc 3}
\langle(\nabla_XA)(Y,V),DF(W)\rangle=-\langle A(Y,V),A(X,W)\rangle\,.
\end{equation}

From these equations we obtain Gau\ss\ equation (Theorema Egregium):
\begin{align}
R^M(X,Y,V,W)&=R^N(DF(X),DF(Y),DF(V),DF(W))\label{struc 1}\\
&+\langle A(X,V),A(Y,W)\rangle-\langle A(X,W),A(Y,V)\rangle\,.\nonumber
\end{align}

Finally, we have Ricci's equation. If $\nu\in T^\perp M$ and $X,Y\in TM$ then the following holds:
\begin{align}
&R^\perp(X,Y)\nu=(R^N(DF(X),DF(Y))\nu)^\perp\nonumber\\
&-\sum_{i=1}^m\bigl(\langle\nu,A(X,e_i)\rangle A(Y,e_i)
-\langle\nu,A(Y,e_i)\rangle A(X,e_i)\bigr)\,,\label{struc 4}
\end{align}
where $(e_i)_{i=1,\dots,m}$ is an arbitrary orthonormal frame of $TM$ and $R^\perp=R^{T^\perp M,\nabla^\perp}$ denotes the curvature tensor of the normal bundle of $M$. Note that the
Codazzi equation is useless in dimension one (i.e. for curves) and that Ricci's equation is useless
for hypersurfaces, i.e. in codimension one.

\subsection{Tensors in local coordinates}\label{sec loc}~

For computations one often needs local expressions of tensors. Whenever
we use local expressions and $F:M\to N$ is an immersion
we make the following general assumptions and notations
\begin{enumerate}[i)]
\item
$(U,x,\Omega)$ and $(V,y,\Lambda)$ are local coordinate charts around $p\in U\subset M$ and
$F(p)\in V\subset N$ such that $F_{|U }:U\to F(U)$ is an embedding and such that
$F(U)\subset V$. 
\item
From the coordinate functions
$$(x^i)_{i=1,\dots,m}:U\to\Omega\subset\real{m}\,,\quad (y^\alpha)_{\alpha=1,\dots,n}:V\to\Lambda\subset\real{n}$$
we obtain a local expression for $F$,
$$y\circ F\circ x^{-1}:\Omega\to\Lambda\,,\quad F^\alpha:=y^\alpha\circ F\circ x^{-1},\,\quad\alpha=1,\dots,n.$$
\item
The Christoffel symbols of the Levi-Civita connections on $M$ resp. $N$ will be denoted
$$\Gamma^i_{jk},\quad i,j,k=1,\dots,m\,,\quad\text{resp.}\quad\Gamma^\alpha_{\beta\gamma},\quad\alpha,\beta,\gamma=1,\dots, n\,.$$
\item
All indices referring to $M$ will be denoted by Latin minuscules and those related to $N$ by Greek minuscules.
Moreover, we will always use the Einstein convention to sum over repeated indices from $1$ to
the respective dimension.
\end{enumerate}

Then the local expressions for $g, DF,\,F^*g$ and $A$ are
$$g=\dd g\alpha\beta dy^\alpha\otimes dy^\beta\,,$$
$$DF=\ud F\alpha i\frac{\partial}{\partial y^\alpha}\otimes dx^i\,,\quad \ud F\alpha i:=\frac{\partial F^\alpha}{\partial x^i}\,,$$
$$F^*g=\dd gij dx^i\otimes dx^j\,,\quad\dd gij:=\dd g\alpha\beta \ud F\alpha i \ud F\beta j,$$
and
$$A=\dd Aijdx^i\otimes dx^j=\udd A\alpha ij\frac{\partial}{\partial y^\alpha}\otimes dx^i\otimes dx^j\,,$$
where the coefficients $\udd A\alpha ij$ are given by Gau\ss' formula
\begin{equation}\label{sec 4}
\udd A\alpha ij=\frac{\partial^2 F^\alpha}{\partial x^i\partial x^j}-\Gamma^k_{ij}\frac{\partial F^\alpha}{\partial x^k}+\Gamma^\alpha_{\beta\gamma}\frac{\partial F^\beta}{\partial x^i}\frac{\partial F^\gamma}{\partial x^j}\,.
\end{equation}
Let $(\uu gij)$ denote the inverse matrix of  $(\dd gij)$ so that $\uu gik\dd gkj=\ud \delta ij$
gives the Kronecker symbol. $(\uu gij)$ defines the metric on $T^*M$ dual to $F^*g$.
For the mean curvature vector we get
\begin{equation}\label{sec 5}
\overrightarrow H=H^\alpha\frac{\partial}{\partial y^\alpha}\,,\quad H^\alpha:=\uu gij \udd A\alpha ij\,.
\end{equation}
Gau\ss' equation (\ref{struc 1}) now becomes
\begin{equation}
R_{ijkl}=R_{\alpha\beta\gamma\delta}\ud F\alpha i\ud F\beta j\ud F\gamma k\ud F\delta l+\dd g\alpha\beta(\udd A\alpha ik\udd A\beta jl-\udd A\alpha il\udd A\beta jk)\,,\label{struc 1l}
\end{equation}
where the notation should be obvious, e.g.
$$R_{ijkl}=R^M\left(\frac{\partial}{\partial x^i},\frac{\partial}{\partial x^j},\frac{\partial}{\partial x^k},\frac{\partial}{\partial x^l}\right)$$
and
$$R_{\alpha\beta\gamma\delta}=R^N\left(\frac{\partial}{\partial y^\alpha},\frac{\partial}{\partial y^\beta},\frac{\partial}{\partial y^\gamma},\frac{\partial}{\partial y^\delta}\right)\,.$$
Note that the choice of the indices already indicates which curvature tensor is used.
In addition we write
$$\nabla A=\nabla_i\udd A\alpha jk\frac{\partial}{\partial y^\alpha}\otimes dx^i\otimes dx^j\otimes dx^k\,,$$
so that
$$(\nabla_{\frac{\partial}{\partial x^i}}A)\left(\frac{\partial}{\partial x^j},
\frac{\partial}{\partial x^k}\right)=\nabla_i\udd A\alpha jk\frac{\partial}{\partial y^\alpha}\,.$$
Similar notations will be used for other covariant derivatives, e.g. $\nabla_i\nabla_j\ud Tkl$ will
denote the coefficients of the tensor $\nabla^2T$ with $T\in\Gamma(TM\otimes T^*M)=\operatorname{End}(TM)$.
The Codazzi equation in local coordinates is
\begin{eqnarray}
\nabla_i\udd A\alpha jk-\nabla_j\udd A\alpha ik
=\uddd R{\alpha}\beta\gamma\delta \ud F\beta k\ud F\gamma i\ud F\delta j
-\uddd Rlkij \ud F\alpha l\,,\label{struc 2l}
\end{eqnarray}
where here and in the following all indices will be raised and lowered using the metric tensors, e.g.
$$\uddd R\alpha\beta\gamma\delta=\uu g\alpha\epsilon\dddd R\epsilon\beta\gamma\delta\,,\quad
\dudu Rkilj=\uu gip\uu gjq\dddd Rkplq\,.$$
Finally, if $(\nu_A)_{A=1,\dots,k:=n-m}$, $\nu_A=\nu_A^\alpha\frac{\partial}{\partial y^\alpha}$, is a local trivialization of $T^\perp M$, then
$$R^\perp\left(\frac{\partial}{\partial x^i},\frac{\partial}{\partial x^j}\right)\nu_A=:(R^\perp)\uddd {}BAij\nu_B$$
and Ricci's equation becomes
\begin{eqnarray}
(R^\perp)\uddd {}BAij\nu_B^\alpha&=&\uddd R\alpha\beta\gamma\delta\nu_A^\beta \ud F\gamma i\ud F\delta j-\uu gkl \uddd R\epsilon\beta\gamma\delta\dd g\epsilon\sigma\nu_A^\beta \ud F\gamma i\ud F\delta j \ud F\sigma k\nonumber\\
&&-\dd g\beta\gamma\uu gkl(\nu_A^\beta A^\gamma_{ik}A^\alpha_{jl}-\nu_A^\beta A^\gamma_{jk}A^\alpha_{il})\,.\label{struc 4l}
\end{eqnarray}
Using the rule for interchanging covariant derivatives and the structure equations one obtains
Simons' identity
\begin{eqnarray}
\nabla_k\nabla_lH^\alpha
&=&\Delta \udd A\alpha kl
+\left(\nabla_\epsilon\uddd R\alpha\beta\gamma\delta+\nabla_\gamma\uddd R\alpha\delta\beta\epsilon\right)
\ud F\epsilon i\ud F\beta l\ud F\gamma k\uu F\delta i\nonumber\\
&&+\uddd R\alpha\beta\gamma\delta
\left(2\udd A\beta ik\ud F\gamma l\uu F\delta i+2\udd A\beta il\ud F\gamma k\uu F\delta i\right.\nonumber\\
&&\left.+H^\delta\ud F\beta l\ud F\gamma k
+\udd A\gamma lk\ud F\beta i\uu F\delta i\right)\nonumber\\
&&-\left(\nabla_k\ud Rpl+\nabla_l\ud Rpk-\nabla^p\dd Rkl\right)\ud F\alpha p\nonumber\\
&&+2\dudu Rkilj\udd A\alpha ij-\ud Rpk\udd A\alpha pl
-\ud Rpl\udd A\alpha pk\,,\label{simons}
\end{eqnarray}
where $\dd Rij=\uu gkl\dddd Rikjl$ denotes the Ricci curvature on $M$. 
If one multiplies Simons' identity (\ref{simons}) with $2\duu A\alpha kl=2\dd g\alpha\epsilon\uu gkm\uu gln\udd A\epsilon mn$, one gets
\begin{eqnarray}
2\langle A,\nabla^2 \overrightarrow H\rangle
&=&\Delta|A|^2-2|\nabla A|^2\nonumber\\
&&
+2\left(\nabla_\epsilon\dddd R\alpha\beta\gamma\delta+\nabla_\gamma\dddd R\alpha\delta\beta\epsilon\right)
\ud F\epsilon i\uu F\beta l\uu F\gamma k\uu F\delta i\udd A\alpha kl\nonumber\\
&&+2\dddd R\alpha\beta\gamma\delta \uuu A\alpha kl
\left(4\udd A\beta ik\ud F\gamma l\uu F\delta i+H^\delta\ud F\beta l\ud F\gamma k
+\udd A\gamma lk\ud F\beta i\uu F\delta i\right)\nonumber\\
&&+4\uuuu Rkilj\langle A_{ij},A_{kl}\rangle-4\uu Rij\langle A_{ik},\du Ajk\rangle\nonumber
\end{eqnarray}
and then since
\begin{eqnarray}
\nabla_iA_{kl}
&=&\nabla^\perp_iA_{kl}+\uu gpq\langle\nabla_iA_{kl},F_p\rangle F_q\nonumber\\
&=&\nabla^\perp_iA_{kl}-\uu gpq\langle A_{kl},\nabla_iF_p\rangle F_q\nonumber\\
&=&\nabla^\perp_iA_{kl}-\uu gpq\langle A_{kl},A_{ip}\rangle F_q\nonumber
\end{eqnarray}
implies
$$|\nabla A|^2=|\nabla^\perp A|^2+\langle A^{ij},A^{kl}\rangle\langle A_{ij},A_{kl}\rangle$$
we obtain with Gau\ss' equation the second Simons' identity
\begin{eqnarray}
2\langle A,\nabla^2 \overrightarrow H\rangle
&=&\Delta|A|^2-2|\nabla^\perp A|^2\nonumber\\
&&+2\langle A^{ij},A^{kl}\rangle\langle A_{ij},A_{kl}\rangle
-4\langle A^{kj},A^{il}\rangle\langle A_{ij},A_{kl}\rangle\nonumber\\
&&-4\langle \overrightarrow H,A^{ij}\rangle\langle A_{ik}, \du Ajk\rangle
+4\langle A^{il},\du Alj\rangle\langle A_{ik}, \du Ajk\rangle\nonumber\\
&&+4\dddd R\alpha\beta\gamma\delta \ud F\alpha k \ud F\beta i \ud F\gamma l \ud F\delta j
\left(\langle A^{ij},A^{kl}\rangle-\uu gkl\langle A^{ip},\du Apj\rangle\right)\nonumber\\
&&+2\dddd R\alpha\beta\gamma\delta \uuu A\alpha kl
\left(4\udd A\beta ik\ud F\gamma l\uu F\delta i+\ud F\beta l\ud F\gamma kH^\delta
+\ud F\beta i\udd A\gamma lk\uu F\delta i\right)\nonumber\\
&&
+2\left(\nabla_\epsilon\dddd R\alpha\beta\gamma\delta+\nabla_\gamma\dddd R\alpha\delta\beta\epsilon\right)
\ud F\epsilon i\ud F\beta l\ud F\gamma k\uu F\delta i\uuu A\alpha kl\nonumber\,.
\end{eqnarray}
The second and third line can be further simplified , so that we get
\begin{eqnarray}
2\langle A,\nabla^2 \overrightarrow H\rangle
&=&\Delta|A|^2-2|\nabla^\perp A|^2\label{simons 3}\\
&&+\bigl|\langle \dd Aij,\dd Akl\rangle-\langle\dd Ail,\dd Ajk\rangle\bigr|^2+\bigl|\udd A\alpha ik\udu  A\beta jk-\udd A\beta ik\udu A\alpha jk\bigr|^2\nonumber\\
&&+2\bigl|\langle \overrightarrow H,\dd Aij\rangle-\langle\dd Aik,\du Ajk\rangle\bigr|^2-2\bigl|\langle \overrightarrow H,\dd Aij\rangle\bigr|^2\nonumber\\
&&+4\dddd R\alpha\beta\gamma\delta \ud F\alpha k \ud F\beta i \ud F\gamma l \ud F\delta j
\left(\langle A^{ij},A^{kl}\rangle-\uu gkl\langle A^{ip},\du Apj\rangle\right)\nonumber\\
&&+2\dddd R\alpha\beta\gamma\delta \uuu A\alpha kl
\left(4\udd A\beta ik\ud F\gamma l\uu F\delta i+\ud F\beta l\ud F\gamma kH^\delta
+\ud F\beta i\udd A\gamma lk\uu F\delta i\right)\nonumber\\
&&
+2\left(\nabla_\epsilon\dddd R\alpha\beta\gamma\delta+\nabla_\gamma\dddd R\alpha\delta\beta\epsilon\right)
\ud F\epsilon i\ud F\beta l\ud F\gamma k\uu F\delta i\uuu A\alpha kl\nonumber\,.
\end{eqnarray}
This last equation is useful to substitute terms in the evolution equation of $|A|^2$ (see subsection \ref{sec evolution} below).

\subsection{Special situations}
\subsubsection{Hypersurfaces}\label{sec hyper}~

If $F:M\to N$ is an immersion of a hypersurface, then $n=m+1$ and one can define a number
of scalar curvature quantities related to the second fundamental tensor of $M$. For simplicity
assume that both $M$ and $N$ are orientable (otherwise the following computations are only
local). Then there exists a unique normal vector field $\nu\in\Gamma(T^\perp M)$ - called
the principle normal - such that for all $p\in M$:
\begin{enumerate}[i)]
\item
$|\nu_{|p}|=1$, $\nu_{|p}\in T^\perp_p M$,
\item
If $e_1,\dots, e_m$ is a positively oriented basis of $T_pM$, then $$DF(e_1),\dots,DF(e_m),\nu_{|p}$$
forms a positively oriented basis of $T_{F(p)}N$.
\end{enumerate}
Using the principle normal $\nu$, one defines the (scalar) second fundamental form $h\in\Gamma(T^*M\otimes T^*M)$ by
$$h(X,Y):=\langle A(X,Y),\nu\rangle$$
and the scalar mean curvature $H$ by 
$$H:=\operatorname{trace}h$$
so that
$$A=\nu\otimes h\,,\quad\overrightarrow H=H\nu\,.$$
The map
$$\flat:TM\to T^*M\,,\quad V\mapsto V_\flat:=\langle V,\cdot\rangle$$
is a bundle isomorphism with inverse denoted by 
$$\sharp:T^*M\to TM\,.$$
This musical isomorphism can be used to define the Weingarten map
$$\mathscr{W}\in\operatorname{End}(TM)\,,\quad \mathscr{W}(X):=(h(X,\cdot))^\sharp\,.$$
Since $h$ is symmetric, the Weingarten map is self-adjoint and the real eigenvalues of $\mathscr{W}$
are called principle curvatures, often denoted by $\lambda_1,\dots,\lambda_m$, so that e.g.
$H=\lambda_1+\dots+\lambda_m$. Note, that in the theory of mean curvature flow $H$ is not
the arithmetic means $\frac{1}{m}\sum_{i=1}^m\lambda_i$ (which would justify its name)
as is often the case in classical books on differential geometry. In local coordinates we have
$$\udd A\alpha ij=\nu^\alpha\dd hij$$
and then the equations of Gau\ss\ and Codazzi can be rewritten in terms of $\dd hij$. E.g.
since $|\nu|^2=1$ we have $\langle\nabla_i\nu,\nu\rangle=0$ and then
$$\nabla_i\nu=\langle\nabla_i\nu,F^m\rangle F_m=-\langle\nu,\nabla_iF^m\rangle F_m=-\du himF_m\,.$$
This implies
\begin{eqnarray}
\nabla_i\udd A\alpha jk&=&\nabla_i(\nu^\alpha\dd hjk)\nonumber\\
&=&-\du him\dd hjk \ud F\alpha m+\nabla_i\dd hjk\nu^\alpha\,.\nonumber
\end{eqnarray}
Multiplying with $\nu_\alpha$ yields
$$\langle\nabla_i\dd Ajk,\nu\rangle=\nabla_i\dd hjk\,.$$
Interchanging $i, j$ and subtracting gives
\begin{eqnarray}
\nabla_i\dd hjk-\nabla_j\dd hik&=&\langle\nabla_i\dd Ajk-\nabla_j\dd Aik,\nu\rangle\nonumber\\
&\overset{(\ref{struc 2l})}{=}&\dddd R\alpha\beta\gamma\delta\nu^\alpha\ud F\beta k\ud F\gamma i\ud F\delta j=R^N(\nu,F_k,F_i,F_j)\,.\nonumber
\end{eqnarray}
Similarly we get Gau\ss\ equation in the form
$$\dddd Rijkl=R^N(F_i,F_j,F_k,F_l)+\dd hik\dd hjl-\dd hil\dd hjk$$
and since the codimension is one, we do not have a Ricci equation in this case.

\subsubsection{Lagrangian submanifolds}~

Let $(N,g=\langle\cdot,\cdot\rangle,J)$ be a K\"ahler manifold, i.e. $J\in\operatorname{End}(TN)$ 
is a parallel complex structure compatible with $g$.
Then $N$ becomes a symplectic manifold with the symplectic form $\omega$ given by the K\"ahler form
$\omega(V,W)=\langle JV,W\rangle$.  An immersion $F:M\to N$ is called Lagrangian, if $F^*\omega=0$ and
$n=\dim N=2m=2\dim M$.  For a Lagrangian immersion we define a section
$$\nu\in\Gamma(T^\perp M\otimes T^*M)\,,\quad\nu:=JDF\,,$$
where $J$ is applied to the $F^*TN$-part of $DF$.
$\nu$ is a $1$-form with values in $T^\perp M$ since by the Lagrangian condition $J$ induces
a bundle isomorphism (actually even a bundle isometry) between $DF(TM)$ and $T^\perp M$.
In local coordinates $\nu$ can be written as
$$\nu=\nu_idx^i=\ud\nu\alpha i\frac{\partial}{\partial y^\alpha}\otimes dx^i$$
with
$$\nu_i=JF_i=\ud J\alpha\beta \ud F\beta i\frac{\partial}{\partial y^\alpha}\,,\quad
\ud\nu\alpha i=\ud J\alpha\beta \ud F\beta i\,.$$
Since $J$ is parallel, we have
$$\nabla\nu=J\nabla DF=JA\,.$$
In contrast to hypersurfaces, we may now define a second fundamental form as a tri-linear form
$$h(X,Y,Z):=\langle\nu(X),A(Y,Z)\rangle\,.$$
It turns out that $h$ is fully symmetric. Moreover, taking a trace, we obtain a $1$-form $H\in\Omega^1(M)$, called
the mean curvature form,
$$H(X):=\operatorname{trace}h(X,\cdot,\cdot)\,.$$
In local coordinates
$$h=\ddd hijk dx^i\otimes dx^j\otimes dx^k,\,\quad H=H_idx^i,\,\quad H_i=\uu gkl\ddd hikl\,.$$
The second fundamental tensor $A$ and the mean curvature vector $\overrightarrow H$
can be written in the form
$$\udd A\alpha ij=\ddu hijk\ud\nu\alpha k\,,\quad\overrightarrow H=H^k\nu_k\,.$$
Since $J$ gives an isometry between the normal and tangent bundle of $M$, the equations
of Gau\ss\ and Ricci coincide, so that we get the single equation 
$$\dddd Rijkl=R^N(F_i,F_j,F_k,F_l)+\ddd hikm\ddu hjlm-\ddd hilm\ddu hjkm\,.$$
Since $\nabla J=0$ and $J^2=-\operatorname{Id}$ we also get
$$\nabla_i\ud\nu\alpha j
=\nabla_i(\ud J\alpha\beta\ud F\beta j)
=\ud J\alpha\beta\nabla_i\ud F\beta j
=\ud J\alpha\beta\udd A\beta ij
=\ud J\alpha\beta\ud\nu\beta k\ddu hijk
=-\ddu hijk\ud F\alpha k.$$
Similarly as above we conclude
\begin{eqnarray}
\nabla_i\ddd hjkl-\nabla_j\ddd hikl&=&\nabla_i\langle\dd Ajk,\nu_l\rangle-\nabla_j\langle\dd Aik,\nu_l\rangle\nonumber\\
&\overset{(\nabla\nu_l\in DF(TM))}{=}&\langle\nabla_i\dd Ajk-\nabla_j\dd Aik,\nu_l\rangle\nonumber\\
&\overset{(\ref{struc 2l})}{=}&R^N(\nu_l,F_k,F_i,F_j)\,.\nonumber
\end{eqnarray}
Taking a trace over $k$ and $l$, we deduce
$$\nabla_iH_j-\nabla_jH_i=R^N(\nu_k, F^k,F_i,F_j)$$
and if we take into account that $N$ is K\"ahler and $M$ Lagrangian, then the RHS is a Ricci curvature,
so that the exterior derivative $dH$ of the mean curvature form $H$ is given by
$$(dH)_{ij}=\nabla_i H_j- \nabla_jH_i=-\operatorname{Ric}^N(\nu_i,F_j).$$

If $(N,g,J)$ is K\"ahler-Einstein, then $H$ is closed (since $\operatorname{Ric}^N(\nu_i,F_j)=c\cdot\omega(F_i,F_j)=0$) and defines a cohomology
class on $M$. In this case any (in general only locally defined) function $\alpha$ with $d\alpha=H$ 
is called a Lagrangian angle. In some sense the Lagrangian condition is an integrability condition.
If we represent a Lagrangian submanifold locally as the graph over its tangent space, then the $m$ ``height" functions are not completely independent but are related
to a common potential. An easy way to see this, is to consider a locally defined $1$-form $\lambda$
on $M$ (in a neighborhood of some point of $F(M)$) with $d\lambda=\omega$. Then by the
Lagrangian condition
$$0=F^*\omega=F^*d\lambda=dF^*\lambda\,.$$
So $F^*\lambda$ is closed and by Poincar\'e's Lemma locally integrable. By the implicit function theorem this potential for $\lambda$ is related to the height functions of $M$ (cf. \cite{Smoczyk99}). Note also that by a result of Weinstein for any Lagrangian embedding $M\subset N$
there exists a tubular neighborhood of $M$ which is
symplectomorphic to $T^*M$ with its canonical symplectic structure $\omega=d\lambda$ induced by the Liouville form $\lambda$.

\subsubsection{Graphs}~

$Let  (M,g^M)$, $(K,g^K)$ be two Riemannian manifolds and $f:M\to K$ a smooth map. $f$ induces a graph 
$$F:M\to N:=M\times K\,,\quad F(p):=(p,f(p))\,.$$
Since $N$ is also a Riemannian manifold equipped with the product metric $g=g^M\times g^K$ one
may consider the geometry of such graphs. It is clear that the geometry of $F$ must be completely
determined by $f$, $g^M$ and $g^K$. Local coordinates $(x^i)_{i=1,\dots,m}$, $(z^A)_{A=1,\dots, k}$ for $M$ resp. $K$ induce local coordinates $(y^\alpha)_{\alpha=1,\dots ,n=m+k}$ on $N$
by $y=(x,z)$. Then locally
$$F_i(x)=\frac{\partial}{\partial x^i}+\ud fAi(x)\frac{\partial}{\partial z^A}\,,$$
where similarly as before $f^A=z^A\circ f\circ x^{-1}$ and $\ud fAi=\frac{\partial f^A}{\partial x^i}$. For the induced metric $F^*g=\dd gij dx^i\otimes dx^j$ we get
$$\dd gij=g^M_{ij}+g^K_{AB}\ud fAi\ud fBj\,.$$
Since this is obviously positive definite and $F$ is injective, graphs $F:M\to M\times K$ 
of smooth mappings $f:M\to K$ are always embeddings. From the formula for $DF=F_idx^i$ and the Gau\ss\ formula
one may then compute the second fundamental tensor $A=\nabla DF$. Since the precise formula
for $A$ is not important in this article, we leave the details as an
exercise to the reader.

\section{General results in higher codimension}\label{sec general}
In this section we focus on results in mean curvature flow that are valid in
any dimension and codimension and that do not depend on specific geometric situations.
\subsection{Short-time existence and uniqueness}\label{sec existence}~

Consider the mean curvature vector field $\overrightarrow H=\overrightarrow H[F]$ as an operator on the class of smooth immersions
$$\mathscr{I}:=\{F:M\to N:F\text{ is a smooth immersion}\}\,.$$
We want to compute the linearized operator belonging to $\overrightarrow H$. To this
end we need to look at the symbol and therefore we consider the locally defined expression
$$L^{\alpha;ij}_\beta:=\frac{\partial H^\alpha}{\partial F^\beta_{ij}}\,,$$
where $F^\beta_{ij}$ is shorthand for $\frac{\partial^2 F^\beta}{\partial x^i\partial x^j}$
and locally $\overrightarrow H=H^\alpha\frac{\partial}{\partial y^\alpha}$.

Let $\dd gk{i,j}:=\partial \dd gki/\partial x^j$.
We start with
\begin{eqnarray}
\frac{\partial g_{kt,m}}{\partial F^\beta_{ij}}
&=&\frac{\partial}{\partial F^\beta_{ij}}\left(g_{\delta\epsilon,\rho}\ud F\delta k \ud F\epsilon t \ud F\rho m+ g_{\delta\epsilon}(F^\delta_{km}\ud F\epsilon t+\ud F\delta kF^\epsilon_{tm})\right)\nonumber\\
&=&g_{\beta\epsilon}\ud \delta jm(\ud F\epsilon t\ud\delta ik+\ud F\epsilon k\ud \delta it)\,.\nonumber
\end{eqnarray}
From this we then obtain
\begin{eqnarray}
\frac{\partial \Gamma^s_{km}}{\partial F^\beta_{ij}}&=&\frac{1}{2} g^{st}g_{\beta\epsilon}\left((\ud\delta ik\ud\delta jm+\ud\delta im\ud\delta jk)\ud F\epsilon t\right.\nonumber\\
&&\left.+(\ud\delta it\ud\delta jm-\ud\delta im\ud\delta jt)\ud F\epsilon k
+(\ud\delta it\ud\delta jk-\ud\delta ik\ud\delta jt)\ud F\epsilon m\right)\,.\nonumber
\end{eqnarray}
Since by Gau\ss' formula
$$H^\alpha=\uu gkm \udd A\alpha km=\uu gkm(F^\alpha_{km}-\Gamma^s_{km}\ud F\alpha s
+\Gamma^\alpha_{\beta\gamma}\ud F\beta k \ud F\gamma m)$$
we obtain
\begin{eqnarray}
L^{\alpha;ij}_\beta
&=&\uu gkm\Bigl(\ud\delta\alpha\beta\ud\delta ik\ud\delta jm-\frac{1}{2} g^{st}g_{\beta\epsilon}\bigl((\ud\delta ik\ud\delta jm+\ud\delta im\ud\delta jk)\ud F\epsilon t
\nonumber\\
&&+(\ud\delta it\ud\delta jm-\ud\delta im\ud\delta jt)\ud F\epsilon k
+(\ud\delta it\ud\delta jk-\ud\delta ik\ud\delta jt)\ud F\epsilon m\bigr)
\ud F\alpha s\Bigr)\nonumber\\
&=&\ud\delta\alpha\beta\uu gij-\uu gst\dd g\beta\epsilon\uu gij \ud F\epsilon t \ud F\alpha s
-(\uu gkj\uu gsi-\uu gki\uu gsj)\dd g\beta\epsilon \ud F\epsilon k\ud F\alpha s\,.\nonumber
\end{eqnarray}

For an arbitrary nonzero $1$-form $\xi=\xi_idx^i$ we define the endomorphism $L=(\ud L\alpha\beta)_{\alpha,\beta=1,\dots,n}$ by
$$\ud L\alpha\beta:=L^{\alpha;ij}_\beta\xi_i\xi_j\,.$$
We compute
\begin{eqnarray}
\ud L\alpha\beta&=&(\ud\delta\alpha\beta-\dd g\beta\epsilon\uu gst \ud F\epsilon t \ud F\alpha s)|\xi|^2\,.\nonumber
\end{eqnarray}
Applying this to a tangent vector $F_l=\ud F\beta l\frac{\partial}{\partial y^\beta}$ we get
$$\ud L\alpha\beta \ud F\beta l=0\,.$$
If $\nu=\nu^\beta\frac{\partial}{\partial y^\beta}$ is normal, then
$$\dd g\beta\epsilon \nu^\beta \ud F\epsilon t=0$$
and hence
$$\ud L\alpha\beta \nu^\beta =|\xi|^2\nu^\alpha\,.$$
Consequently $L$ is degenerate along tangent directions of $F$ and elliptic along normal directions, more precisely for $\xi\in T_p^*M$ we have
$$L_{|p}=|\xi|^2\pi_{|p}\,,$$
where $\pi_{|p}:T_{F(p)}N\to T_p^\perp M$ is the projection of $T_{F(p)}N$ onto $T_p^\perp M$.
The reason for the $m$ degeneracies is the following: Writing a solution $F:M\to N$ of $\overrightarrow H=0$
locally as the graph over its tangent plane at $F(p)$, we see that we need as many height functions as there are codimensions, i.e. we need $k=n-m$ functions. On the other hand the system $H^\alpha=0,\alpha=1,\dots, n$ consists of $n$ coupled equations and is therefore overdetermined with a
redundancy of $m$ equations. These $m$ redundant equations
correspond
to the diffeomorphism group of the underlying $m$-dimensional manifold $M$. This
means the following:
\begin{proposition}[Invariance under the diffeomorphism group]\label{prop diff}
If $F:M\times[0,T)\to N$ is a solution of the mean curvature flow,
and $\phi\in \operatorname{Diff}(M)$ a fixed diffeomorphism of $M$,
then $\tilde F:M\times[0,T)\to N$, $\tilde F(p,t):=F(\phi(p),t)$
is another solution. In particular, the (immersed) submanifolds $\tilde M_t:=\tilde F(M,t)$
and $M_t:=F(M,t)$ coincide for all $t$.
\end{proposition}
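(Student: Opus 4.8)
The plan is to split the statement into two logically independent observations. The first is an essentially trivial remark about the time derivative, which uses only that $\phi$ does not depend on $t$. The second is the geometric heart of the matter, namely that the mean curvature vector is invariant under reparametrization of the domain; this is what makes (\ref{mcf}) a genuinely geometric equation despite the parametrization-dependent appearance of $F$.

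First I would differentiate $\tilde F$ in time. Since $\phi\in\operatorname{Diff}(M)$ is fixed and in particular independent of $t$, the chain rule gives, for every $p\in M$ and $t\in[0,T)$,
\begin{equation*}
\frac{d\tilde F}{dt}(p,t)=\frac{d}{dt}F(\phi(p),t)=\frac{dF}{dt}(\phi(p),t)=\overrightarrow H(\phi(p),t),
\end{equation*}
where the last equality uses that $F$ solves (\ref{mcf}). Here $\overrightarrow H(\phi(p),t)$ denotes the mean curvature vector of the immersion $F_t$ at the point $\phi(p)$.

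The main step is then to identify $\overrightarrow H(\phi(p),t)$ with the mean curvature vector of $\tilde F_t=F_t\circ\phi$ at $p$. For this I would establish the naturality of the second fundamental tensor under reparametrization. From $\tilde F_t=F_t\circ\phi$ the chain rule gives $D\tilde F_{t|p}=DF_{t|\phi(p)}\circ D\phi_{|p}$, so the induced metric satisfies $\tilde F_t^*g=\phi^*(F_t^*g)$ and the image tangent space $D\tilde F_{t|p}(T_pM)=DF_{t|\phi(p)}(T_{\phi(p)}M)$ agrees with that of $F_t$ at $\phi(p)$; consequently the normal spaces and the normal projections coincide as well. Using that the connections $\nabla^{TM}$ and $\nabla^{F^*TN}$ of Section \ref{sec mean} are obtained by projection of the ambient $\nabla$ and are therefore natural with respect to $\phi$, one checks that $A=\nabla DF$ satisfies
\begin{equation*}
A[\tilde F_t](X,Y)_{|p}=A[F_t]\bigl(D\phi_{|p}(X),D\phi_{|p}(Y)\bigr)_{|\phi(p)},\qquad X,Y\in T_pM.
\end{equation*}
Since $D\phi_{|p}$ maps a $\tilde F_t^*g$-orthonormal frame $(e_i)$ at $p$ to an $F_t^*g$-orthonormal frame $(D\phi_{|p}e_i)$ at $\phi(p)$, taking the trace in (\ref{sec 3}) yields $\overrightarrow H[\tilde F_t](p)=\overrightarrow H[F_t](\phi(p))$. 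Combined with the time derivative above, this shows $\frac{d\tilde F}{dt}(p,t)=\overrightarrow H[\tilde F_t](p)$, i.e. $\tilde F$ solves (\ref{mcf}).

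I expect the only genuine obstacle to be the verification of this naturality identity for $A$; everything else is bookkeeping. Concretely, the point is that all connections entering the definition of $A$ are induced by the ambient $\nabla$ and by projection onto $DF(TM)$, both of which are compatible with precomposition by the fixed diffeomorphism $\phi$; this can be checked most transparently from the local Gau\ss\ formula (\ref{sec 4}), or coordinate-free from the uniqueness of the Levi-Civita connection applied to the isometry $\phi:(M,\tilde F_t^*g)\to(M,F_t^*g)$. Finally, the statement about the images is immediate: since $\phi$ is a diffeomorphism of $M$ it is surjective, so $\tilde M_t=\tilde F(M,t)=F(\phi(M),t)=F(M,t)=M_t$ for every $t$.
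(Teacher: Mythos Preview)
Your argument is correct: the time derivative is immediate because $\phi$ is time-independent, and the identification $\overrightarrow H[\tilde F_t](p)=\overrightarrow H[F_t](\phi(p))$ is precisely the reparametrization invariance of the mean curvature vector, which you verify via the naturality of $A$ and the fact that $D\phi_{|p}$ is an isometry $(T_pM,\tilde F_t^*g)\to(T_{\phi(p)}M,F_t^*g)$.

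The paper, however, does not give a proof of this proposition at all. It is stated immediately after the computation showing that the symbol $L$ of the linearized mean curvature operator vanishes on tangent directions, and is offered as the geometric explanation of that $m$-fold degeneracy (``This means the following:''). So there is nothing to compare at the level of argument; your write-up supplies the direct verification that the paper omits as obvious. If anything, your emphasis on the naturality of $A$ via the induced connections is slightly more elaborate than necessary---one could equally well note that $\overrightarrow H$ depends only on the image $F_t(M)$ near $F_t(\phi(p))=\tilde F_t(p)$---but what you wrote is correct and self-contained.
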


Thus the mean curvature flow is a (degenerate) quasilinear parabolic evolution equation.
The following theorem is well-known and in particular forms a special case of a theorem by Richard Hamilton \cite{Ham82-2}, based on the Nash-Moser implicit function theorem treated in another paper by Hamilton \cite{Ham82-1}.
\begin{proposition}[Short-time existence and uniqueness]\label{prop short time}
Let $M$ be a  smooth closed manifold and $F_0:M\to N$ a smooth immersion into a smooth Riemannian manifold $(N,g)$. Then the mean curvature flow admits a unique smooth
solution on a maximal time interval $[0,T)$, $0<T\le\infty$. 
\end{proposition}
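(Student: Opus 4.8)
The plan is to overcome the single genuine difficulty, namely that (\ref{mcf}) is only \emph{degenerate} parabolic: we computed above that the principal symbol of $\overrightarrow H$ equals $|\xi|^2\pi$, which annihilates the tangential directions $DF(T_pM)$ and is an isomorphism only on the normal space $T^\perp_pM$. This $m$-fold degeneracy is exactly the infinitesimal expression of the diffeomorphism invariance of Proposition \ref{prop diff}, so the system becomes strictly parabolic only once the gauge freedom is fixed. I would therefore split the proof into three steps: break the gauge to obtain a strictly parabolic system, solve that system by standard linear theory together with a fixed-point argument, and finally reparametrise to recover a genuine solution of (\ref{mcf}).

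First I would introduce a DeTurck-type modification. Fixing the initial immersion $F_0$ together with a background connection $\widehat\Gamma$ on $M$, I define the tangential vector field $W=W[F]\in\Gamma(TM)$ with local components $W^k=\uu gij(\Gamma^k_{ij}-\widehat\Gamma^k_{ij})$, i.e.\ the contraction of the (tensorial) difference of the induced Levi-Civita connection of $F^*g$ and the fixed one, and consider the modified flow
\[
\frac{\partial F}{\partial t}=\overrightarrow H+DF(W)\,.
\]
The extra first-order term is engineered to act precisely along the degenerate tangential directions: in Gau\ss' formula the problematic tangential term $-\uu gij\Gamma^k_{ij}F_k$ inside $\overrightarrow H$ is replaced by $-\uu gij\widehat\Gamma^k_{ij}F_k$, which no longer contributes to the symbol. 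A short computation parallel to the one leading to $L_{|p}=|\xi|^2\pi_{|p}$ then shows that the modified operator has principal symbol $|\xi|^2\operatorname{Id}$ on all of $F^*TN$, so the modified flow is a genuinely strictly parabolic quasilinear system.

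Solving the modified system is routine. I would work in parabolic Hölder spaces $C^{2+\alpha,1+\alpha/2}$ of sections of $F^*TN$ (or $L^p$-type spaces), linearise about $F_0$, invoke the existence, uniqueness and Schauder estimates for linear strictly parabolic systems, and close a contraction-mapping argument to obtain a unique short-time solution $\tilde F$ of the modified flow with $\tilde F(\cdot,0)=F_0$; bootstrapping the Schauder estimates gives smoothness in space and time. To return to (\ref{mcf}), I would solve the ordinary differential equation $\frac{\partial}{\partial t}\psi_t=-W[\tilde F]\circ\psi_t$, $\psi_0=\operatorname{id}$, on the closed manifold $M$, obtaining a smooth family $\psi_t\in\operatorname{Diff}(M)$, and set $F:=\tilde F\circ\psi_t$. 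Using the chain rule, the reparametrisation invariance of $\overrightarrow H$ and the defining ODE for $\psi_t$, the extra term $DF(W)$ is cancelled and $F$ solves (\ref{mcf}) with $F(\cdot,0)=F_0$; consistency of this procedure with the gauge is exactly Proposition \ref{prop diff}.

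For uniqueness I would apply the same gauge fixing to any two smooth solutions of (\ref{mcf}) with the same initial data, reducing to uniqueness for the strictly parabolic modified system, which is standard; one only needs to check that the reparametrising diffeomorphisms coincide, and this again follows from uniqueness for the generating ODE. Finally, the set of $T$ for which a smooth solution exists on $[0,T)$ is non-empty by the above and open by reapplying short-time existence at any interior time, so taking $T$ to be its supremum yields the maximal interval $[0,T)$ with $0<T\le\infty$. \emph{The main obstacle throughout is the degeneracy of the symbol}; once it is removed by the DeTurck term every remaining step is classical parabolic theory. I note that the cited work of Hamilton instead treats the degenerate system directly, verifying that the cokernel of the non-invertible linearisation is absorbed by the diffeomorphism group so that the Nash--Moser inverse function theorem applies.
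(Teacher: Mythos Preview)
Your argument is correct; the DeTurck modification does exactly what you claim, the resulting system is strictly parabolic with symbol $|\xi|^2\operatorname{Id}$, and the reparametrisation step together with the ODE for $\psi_t$ recovers a solution of (\ref{mcf}). The one place where a reader might want slightly more is uniqueness: given an arbitrary solution $F$ of (\ref{mcf}) you must first produce the diffeomorphisms that carry it to a solution of the \emph{modified} flow before you can invoke uniqueness for the strictly parabolic system, and this amounts to solving a harmonic-map-type heat flow for maps $M\to M$ rather than a mere ODE. This is still standard, but it is a PDE step, not the ODE you wrote.

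The paper, however, gives no proof at all: it simply records the proposition as well known and cites Hamilton \cite{Ham82-2}, where the degenerate system is handled directly via the Nash--Moser inverse function theorem of \cite{Ham82-1}. So your approach is genuinely different. Hamilton's route accepts the degeneracy and shows that the linearisation is invertible modulo the diffeomorphism action, which forces the heavy Nash--Moser machinery because of the loss of derivatives. Your DeTurck route trades that machinery for an auxiliary gauge-fixing term, after which only classical linear parabolic theory and a contraction mapping are needed; the price is the extra bookkeeping of the reparametrising diffeomorphisms. In practice the DeTurck argument is the one most people would now write down, and you have correctly identified at the end that the paper's citation points to the other method.
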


Besides the invariance of the equation under the diffeomorphism group of $M$ the flow is 
isotropic, i.e. invariant under isometries of the ambient space. This property follows from the
invariance of the first and second fundamental forms under isometries.
\begin{proposition}[Invariance under isometries]\label{theo isom}
Suppose $F:M\times[0,T)\to N$ is a smooth solution of the mean curvature flow and assume that
$\phi$ is an isometry of the ambient space $(N,g)$. Then the family $\tilde F:=\phi\circ F$
is another smooth solution of the mean curvature flow. In particular, if the initial immersion is invariant under $\phi$,
then it will stay invariant for all $t\in[0,T)$.
\end{proposition}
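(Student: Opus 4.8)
The plan is to use the naturality of every geometric structure entering the mean curvature vector under the isometry $\phi$. First I would check that $\tilde F_t:=\phi\circ F_t$ is again a smooth immersion: since an isometry is in particular a diffeomorphism, $D\tilde F_t=D\phi\circ DF_t$ has the same rank as $DF_t$ at every point, and smoothness in $(p,t)$ is immediate. The heart of the argument is then the transformation law $\overrightarrow{\tilde H}=D\phi(\overrightarrow H)$ for the mean curvature vector.

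To establish this, I would invoke the defining property of an isometry of $(N,g)$, namely that it preserves the Levi-Civita connection, $D\phi(\nabla_VW)=\nabla_{D\phi(V)}(D\phi(W))$ for all vector fields $V,W$ on $N$, together with $\phi^*g=g$. These two facts imply that $\phi$ identifies the induced data of $F_t$ with that of $\tilde F_t$: one has $\tilde F_t^*g=F_t^*g$, the differential $D\phi$ maps $T^\perp M$ (the normal bundle of $F_t$) isometrically onto the normal bundle of $\tilde F_t$, and $D\phi$ intertwines the induced connections on $F^*TN$. Since the second fundamental tensor is defined as $A=\nabla DF$, naturality of $\nabla$ under $\phi$ gives $\tilde A(X,Y)=D\phi(A(X,Y))$. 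Taking the trace over an orthonormal frame of $TM$ — which $D\phi$ sends to an orthonormal frame because $\phi$ is an isometry — yields the desired $\overrightarrow{\tilde H}=D\phi(\overrightarrow H)$.

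With the transformation law in hand, I would differentiate the composition in time. As $\phi$ is independent of $t$,
\[
\frac{d\tilde F}{dt}(p,t)=D\phi\Bigl(\frac{dF}{dt}(p,t)\Bigr)=D\phi(\overrightarrow H(p,t))=\overrightarrow{\tilde H}(p,t),
\]
using that $F$ solves (\ref{mcf}) in the middle step and the transformation law in the last. Hence $\tilde F$ solves (\ref{mcf}) as well. For the final assertion, if the initial immersion is invariant in the sense that $\phi\circ F_0=F_0$, then $\tilde F_0=\phi\circ F_0=F_0$, so $F$ and $\tilde F$ are two mean curvature flows with identical initial data; the uniqueness statement of Proposition \ref{prop short time} forces $\tilde F_t=F_t$, i.e. $\phi\circ F_t=F_t$, for all $t\in[0,T)$.

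The whole argument is essentially formal once the transformation law $\overrightarrow{\tilde H}=D\phi(\overrightarrow H)$ is secured, so that identity is the one step deserving care; it rests entirely on the naturality of the Levi-Civita connection under isometries, which is precisely the mechanism making the mean curvature vector an isometric invariant. If one only assumes the weaker invariance $\phi(F_0(M))=F_0(M)$ of images, one combines the above with the reparametrization freedom of Proposition \ref{prop diff}.
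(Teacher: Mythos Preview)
Your argument is correct and follows the same route the paper indicates: the paper does not give a formal proof but only remarks, just before the statement, that the result ``follows from the invariance of the first and second fundamental forms under isometries.'' Your proof is precisely a careful unpacking of that sentence, together with the (implicit in the paper) appeal to uniqueness in Proposition~\ref{prop short time} for the ``in particular'' clause.
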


We note that the short-time existence and uniqueness result stated
above is not in the most general form. For example, it is not necessary
to assume smoothness initially, it suffices to assume Lipschitz continuity. We note also that in general the short-time existence
and uniqueness result for non-compact complete manifolds $M$ is open but there exist
important contributions in special cases. Based on interior estimates, Ecker and Huisken \cite{E-H91} proved  -
requiring only a local Lipschitz condition for the initial hypersurface -,
a short-time existence result for the mean curvature flow of complete hypersurfaces. In that paper the authors also show that the mean curvature flow smoothes out Lipschitz hypersurfaces (i.e. the solution becomes smooth for $t>0$). This short-time existence result has been improved
in a paper by Colding and Minicozzi \cite{Colding-Minicozzi} where one only needs to assume
a local bound for the initial height function.
The smoothing out result by Ecker and Huisken has been extended by Wang to any dimension and codimension in \cite{Wang04-2} provided the submanifolds have a small local Lipschitz  norm (which cannot be improved by an example of Lawson and Osserman) and the
ambient space has bounded geometry.
Recently 
Chen and Yin \cite{Chen-Yin07} proved that uniqueness for complete
manifolds $M$ still holds within the class of smooth solutions with bounded second fundamental tensor, if the ambient Riemannian manifold $(N,g)$ has bounded geometry in a certain sense.
Chen and Pang \cite{Chen-Pang09} considered uniqueness of unbounded solutions of the Lagrangian mean curvature flow equation for graphs. 

\subsection{Evolution equations}\label{sec evolution}~

Suppose $F:M\times[0,T)\to N$ is a smooth solution of the mean curvature flow
$$\frac{d}{dt}F=\overrightarrow H\,.$$
In this subsection we want to state and prove evolution equations of the most important
geometric quantities on $M$, like the first and second fundamental forms. 

To this end we will compute evolution equations for various sections $\sigma$ in vector bundles $E$ over
$M$. We will use the index notation introduced in subsection \ref{sec loc}.
In particular, we will consider those cases, where $\sigma$ is a section in a 
vector bundle $E_t$  which itself depends on time $t$. If for example $\nu_t$ is the 
principal normal vector field of a hypersurface $F:M\to N$, then $\nu_t$ is a section in $E_t:=F^{*}_tTN$. In
this case the mere computation of the total derivative of $\nu_t$ w.r.t. $t$ will be
insufficient since this would only make sense in local coordinates (local in space and time). To overcome this difficulty we just need to define a connection $\nabla$ on 
$F^{*}TN$, where $F$ 
is now considered as a smooth map (in general no immersion) from the space-time manifold $M\times[0,T)$ to 
$N$. A time derivative then becomes a covariant derivative in direction of $\dt$,
for example for a time dependent section $\nu\in F^{*}TN$ we have in local coordinates
$$\nu(x,t)=\nu^\alpha(x,t)\frac{\partial}{\partial y^\alpha}$$
$$\nabla_{\dt}\nu=\left(\frac{d\nu^\alpha}{dt}
+\Gamma^\alpha_{\beta\delta}\frac{dF^\beta}{dt}\nu^\delta\right)
\frac{\partial}{\partial y^\alpha}=\left(\frac{d\nu^\alpha}{dt}
+\Gamma^\alpha_{\beta\delta}H^\beta\nu^\delta\right)
\frac{\partial}{\partial y^\alpha}\,,$$
where $\Gamma^\alpha_{\beta\delta}$ are the Christoffel symbols of the Levi-Civita
connection on $N$ and $(y^\alpha)$ are local coordinates on $N$.
On the other hand, if $\sigma$ is a section in a bundle $E$ and $E$ does not depend on $t$, 
then the covariant derivative $\nabla_{\dt}\sigma$ coincides with $\dt\sigma$. For example for
the induced metric $F_t^*g\in\Gamma(T^*M\otimes T^*M)$ we have
$$F^*_tg=\dd gij(x,t) dx^i\otimes dx^j$$
and
$$\nabla_{\dt}F^*_t g=\dt\dd gij(x,t) dx^i\otimes dx^j$$
since $T^*M$ does not depend on $t$.
Likewise, for the second fundamental tensor $A$ (considered as a section in $F^*TN\otimes T^*M\otimes T^*M$, which makes sense since for $\tilde M= M\times [0,T)$ we have $T^*\tilde M= T^*M\oplus T^*\real{}$) we get
\begin{equation}\label{cov 1}
\nabla_{\dt}\udd A\alpha ij=\dt \udd A\alpha ij+\Gamma^\alpha_{\beta\gamma}\frac{dF^\beta}{dt}\udd A\gamma ij=
\dt \udd A\alpha ij+\Gamma^\alpha_{\beta\gamma}H^\beta \udd A\gamma ij\,.
\end{equation}
\begin{lemma}
If $F:M\times[0,T)\to (N,g)$ evolves under the mean curvature flow, then the induced Riemannian metrics $F_t^*g=\dd gij(x,t) dx^i\otimes dx^j\in\Gamma(T^*M\otimes T^*M)$ evolve according to
\begin{equation}\label{evol 2}
\nabla_{\dt}\dd gij=\dt\dd gij=-2\langle \overrightarrow H,A_{ij}\rangle\,.
\end{equation}
\end{lemma}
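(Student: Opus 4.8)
The plan is to compute the time derivative of the induced metric coefficients $\dd gij = \dd g\alpha\beta \ud F\alpha i \ud F\beta j$ directly, exploiting the fact that $T^*M\otimes T^*M$ does not depend on $t$ so that $\nabla_{\dt}\dd gij = \dt\dd gij$, as already observed in the excerpt. First I would differentiate $\dd gij$ using the product rule. Three terms arise: one from the $t$-dependence of $\dd g\alpha\beta$ (which is evaluated along $F$, so differentiating it picks up $\partial_\rho \dd g\alpha\beta$ times $\frac{dF^\rho}{dt}=H^\rho$), and two from differentiating each factor $\ud F\alpha i$ and $\ud F\beta j$ in $t$. The cleanest way to organize this is to interpret the computation covariantly: since $\nabla$ is the Levi--Civita connection of $g$ on $N$, the metric is parallel, $\nabla g=0$, so the term coming from $\partial_\rho \dd g\alpha\beta$ combines with Christoffel-symbol corrections from the other two terms. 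Concretely, I expect
\begin{equation*}
\dt\dd gij = \langle \nabla_{\dt}F_i, F_j\rangle + \langle F_i, \nabla_{\dt}F_j\rangle,
\end{equation*}
where $F_i := DF(\partial/\partial x^i)$ and $\nabla_{\dt}$ is the space-time connection on $F^*TN$ introduced just before the lemma.

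Next I would interchange the order of covariant differentiation. Because $\partial/\partial t$ and $\partial/\partial x^i$ commute as coordinate vector fields on the space-time manifold $M\times[0,T)$, and because the connection on $F^*TN$ is torsion-free in the relevant sense (it is pulled back from the Levi--Civita connection on $N$), one has $\nabla_{\dt}F_i = \nabla_i\left(\frac{dF}{dt}\right) = \nabla_i \overrightarrow H$. Substituting the flow equation $\frac{dF}{dt}=\overrightarrow H$ gives
\begin{equation*}
\dt\dd gij = \langle \nabla_i \overrightarrow H, F_j\rangle + \langle F_i, \nabla_j \overrightarrow H\rangle.
\end{equation*}

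Finally I would convert each inner product into the desired form using the normality of the second fundamental tensor. Since $\overrightarrow H$ is a normal vector field, $\langle \overrightarrow H, F_j\rangle=0$, and differentiating this identity in the $i$-direction yields $\langle \nabla_i \overrightarrow H, F_j\rangle = -\langle \overrightarrow H, \nabla_i F_j\rangle = -\langle \overrightarrow H, A_{ij}\rangle$, where the last equality is the Gau\ss{} relation $\nabla_i F_j = A_{ij}$ (modulo the tangential piece, which is killed by the inner product with the normal $\overrightarrow H$). The symmetric term contributes the same, so the two combine to $-2\langle \overrightarrow H, A_{ij}\rangle$, as claimed. The main obstacle, and the step deserving the most care, is the bookkeeping in the first paragraph: one must verify that the Christoffel-symbol terms appearing when differentiating $\dd g\alpha\beta(F(x,t))$ and $\ud F\alpha i$ assemble exactly into the covariant expression $\langle\nabla_{\dt}F_i,F_j\rangle+\langle F_i,\nabla_{\dt}F_j\rangle$; this is precisely where $\nabla g=0$ is used, and it is the place where an index error would otherwise creep in. Everything after that is a clean application of commuting derivatives and normality.
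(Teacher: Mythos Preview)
Your proposal is correct and follows essentially the same route as the paper: both compute $\nabla_{\dt}\dd gij$ from $\dd gij=\dd g\alpha\beta\ud F\alpha i\ud F\beta j$ using $\nabla g=0$, then invoke the symmetry $\nabla_{\dt}F_i=\nabla_i\overrightarrow H$ of the space-time second fundamental tensor, and finally differentiate the normality relation $\langle\overrightarrow H,F_j\rangle=0$ to convert $\langle\nabla_i\overrightarrow H,F_j\rangle$ into $-\langle\overrightarrow H,A_{ij}\rangle$. The only difference is cosmetic: the paper carries the computation in index notation throughout, whereas you phrase the same steps in invariant bracket notation; also note that in the paper's conventions $\nabla_iF_j=A_{ij}$ holds exactly (no tangential remainder), so your parenthetical qualification there is unnecessary.
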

\begin{proof}
We have
$$\dd gij=\dd g\alpha\beta \ud F\alpha i \ud F\beta j$$
and thus
\begin{eqnarray}
\nabla_{\dt}\dd gij&=&\underbrace{\nabla_\gamma\dd g\alpha\beta}_{=0}\frac{d F^\gamma}{dt}\ud F\alpha i\ud F\beta j+\dd g\alpha\beta\left(\nabla_{\dt}\ud F\alpha i \ud F\beta j
+\ud F\alpha i\nabla_{\dt}\ud F\beta j\right)\nonumber\\
&=&\dd g\alpha\beta\left(\nabla_i\frac{d F^\alpha}{dt}\ud F\beta j+\ud F\alpha i\nabla_j\frac{d F^\beta}{dt}\right)\nonumber\\
&=&\dd g\alpha\beta\left(\nabla_iH^\alpha \ud F\beta j+\ud F\alpha i\nabla_jH^\beta\right)\,,\label{evol 2b}
\end{eqnarray}
where we have used that $\nabla_\gamma\dd g\alpha\beta=0$ (since $\nabla $ is metric)
and $\nabla_{\dt}\ud F\alpha i=\nabla_i\frac{dF^\alpha}{dt}$. This last identity holds since
the second fundamental tensor $\tilde A\in\Gamma(F^*TN\otimes T^*\tilde M\otimes T^*\tilde M)$
of the map $F:\tilde M\to N$ is symmetric, so that
$$\tilde A\left(\frac{\partial}{\partial x^i},\dt\right)=\nabla_i\frac{d F^\alpha}{dt}\frac{\partial}{\partial y^\alpha}=\nabla_{\dt}\ud F\alpha i\frac{\partial}{\partial y^\alpha}=\tilde A\left(\dt,\frac{\partial}{\partial x^i}\right)\,.$$
Now since $\dd g\alpha\beta H^\alpha \ud F\beta j=0$, we get
\begin{eqnarray}
0&=&\nabla_i(\dd g\alpha\beta H^\alpha \ud F\beta j)\nonumber\\
&=&\nabla_\gamma\dd g\alpha\beta \ud F\gamma iH^\alpha \ud F\beta j+\dd g\alpha\beta(\nabla_iH^\alpha \ud F\beta j+H^\alpha\nabla_i\ud F\beta j)\nonumber\\
&=&\dd g\alpha\beta(\nabla_iH^\alpha \ud F\beta j+H^\alpha \udd A\beta ij)\nonumber
\end{eqnarray}
since $\nabla_i\ud F\beta j=\udd A\beta ij$.
If we insert this into equation (\ref{evol 2b}), then we obtain the result.
\end{proof}
\begin{corollary}
The induced volume form $d\mu_t$ on $M$ evolves according to
\begin{equation}\label{evol 3}
\nabla_\dt d\mu_t=\dt d\mu_t=-|\overrightarrow H|^2d\mu_t\,.
\end{equation}
\end{corollary}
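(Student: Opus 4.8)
The plan is to pass to local coordinates and reduce everything to the evolution of the metric that was just established. Recall that in a chart the induced volume form is $d\mu_t=\sqrt{\det(\dd gij(x,t))}\,dx^1\wedge\cdots\wedge dx^m$. Since $d\mu_t$ is a section of the bundle $\Lambda^m T^*M$, which (like $T^*M$ itself) does not depend on $t$, the general principle recorded earlier in this subsection — that $\nabla_{\dt}\sigma=\dt\sigma$ whenever $\sigma$ is a section of a time-independent bundle — immediately gives the first equality $\nabla_\dt d\mu_t=\dt d\mu_t$. So the whole content is to compute the ordinary time derivative of $\sqrt{\det(\dd gij)}$.

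First I would invoke Jacobi's formula for the derivative of a determinant, which yields
$$\dt\det(\dd gij)=\det(\dd gij)\,\uu gij\,\dt\dd gij\,.$$
Combining this with the chain rule for the square root gives
$$\dt\sqrt{\det(\dd gij)}=\tfrac{1}{2}\sqrt{\det(\dd gij)}\;\uu gij\,\dt\dd gij\,.$$
Next I would substitute the evolution equation of the metric from the preceding Lemma, namely $\dt\dd gij=-2\langle\overrightarrow H,A_{ij}\rangle$, so that
$$\uu gij\,\dt\dd gij=-2\,\uu gij\langle\overrightarrow H,A_{ij}\rangle=-2\langle\overrightarrow H,\uu gij A_{ij}\rangle\,.$$

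At this point the key algebraic observation is that the mean curvature vector is exactly the metric trace of the second fundamental tensor, $\overrightarrow H=\uu gij A_{ij}$ (this is the coordinate form of $\overrightarrow H=\operatorname{trace}A$ used to define $H^\alpha=\uu gij\udd A\alpha ij$). Hence $\uu gij\langle\overrightarrow H,A_{ij}\rangle=\langle\overrightarrow H,\overrightarrow H\rangle=|\overrightarrow H|^2$, and therefore $\dt\sqrt{\det(\dd gij)}=-|\overrightarrow H|^2\sqrt{\det(\dd gij)}$, which is precisely the asserted identity for $d\mu_t$. There is no genuine obstacle in this argument: the only points requiring a word of justification are the reduction $\nabla_\dt=\dt$ on the time-independent bundle $\Lambda^m T^*M$ and the use of Jacobi's formula; the remainder is a single index contraction resting on the definition of $\overrightarrow H$ as the trace of $A$.
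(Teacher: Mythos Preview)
Your proof is correct and follows essentially the same approach as the paper: express $d\mu_t$ in local coordinates, apply Jacobi's formula $\dt\det(\dd gij)=\det(\dd gij)\,\uu gij\,\dt\dd gij$, and substitute the evolution equation of the metric. The paper simply says ``the claim follows easily'' after stating Jacobi's formula, whereas you spell out the chain rule, the contraction $\uu gij A_{ij}=\overrightarrow H$, and the justification of $\nabla_{\dt}=\dt$ on the time-independent bundle $\Lambda^m T^*M$; these are exactly the details the paper leaves implicit.
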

\begin{proof}
In local coordinates we have
$$d\mu_t=\sqrt{\det\dd gkl}dx^1\wedge\dots\wedge dx^m\,.$$
Since 
$$\dt\left(\det\dd gkl\right)=\left(\uu gij\dt\dd gij\right)\det\dd gkl$$
the claim follows easily.
\end{proof}
\begin{corollary}
The Christoffel symbols $\Gamma^k_{ij}$ of the Levi-Civita connection on $M$ evolve according to
\begin{eqnarray}
\dt\Gamma^k_{ij}=-\uu gkl\Bigl(\nabla_i\langle \overrightarrow H,A_{jl}\rangle+\nabla_j\langle \overrightarrow H,A_{il}\rangle-\nabla_l\langle\overrightarrow H,A_{ij}\rangle\Bigr)\,.
\end{eqnarray}
\end{corollary}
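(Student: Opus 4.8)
The plan is to compute the time derivative of the Christoffel symbols directly from their coordinate definition $\Gamma^k_{ij}=\frac{1}{2}g^{kl}(g_{il,j}+g_{jl,i}-g_{ij,l})$ and substitute the evolution equation (\ref{evol 2}) for the metric. First I would recall that under the flow the metric evolves as $\dt g_{ij}=-2\langle\overrightarrow H,A_{ij}\rangle$, which is the content of the preceding lemma. The key structural observation is that although $\Gamma^k_{ij}$ is not a tensor, its time derivative \emph{is} a tensor of type $(1,2)$, because the difference of two connections is tensorial; this is why the final answer can be written covariantly.

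The main computation proceeds as follows. Writing $\dt\Gamma^k_{ij}$ in terms of $\dt g^{kl}$ and $\dt g_{il,j}$, I would use $\dt g^{kl}=-g^{kp}g^{lq}\dt g_{pq}$ for the inverse metric and commute the time derivative past the coordinate partial derivatives $\partial/\partial x^j$ (which is legitimate since $M\times[0,T)$ is a product and spatial and temporal coordinates are independent). This gives raw partial derivatives of $\dt g_{ij}$. The essential trick, standard in this type of calculation, is to replace the resulting combination of coordinate partial derivatives of the symmetric $2$-tensor $\dt g_{ij}$ by covariant derivatives: the non-tensorial correction terms involving $\Gamma$'s arrange themselves so that the formula
\begin{equation}\label{chr 1}
\dt\Gamma^k_{ij}=\tfrac{1}{2}g^{kl}\bigl(\nabla_i(\dt g_{jl})+\nabla_j(\dt g_{il})-\nabla_l(\dt g_{ij})\bigr)
\end{equation}
holds. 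Substituting $\dt g_{ij}=-2\langle\overrightarrow H,A_{ij}\rangle$ into (\ref{chr 1}) and pulling out the factor $-2$ yields exactly the claimed expression.

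The step I expect to be the main obstacle, or at least the only one requiring genuine care, is verifying the covariant reformulation (\ref{chr 1}): one must check that the Christoffel correction terms produced when expanding $\dt g_{il,j}$ in coordinates precisely cancel against those generated by $\dt g^{kl}$ and by the antisymmetric combination of the three derivative terms. This is a bookkeeping identity that is cleanest to establish at a point using normal coordinates centered there, where all $\Gamma^k_{ij}$ vanish (though their derivatives do not) and where covariant and partial derivatives of $\dt g_{ij}$ agree to first order; since both sides of (\ref{chr 1}) are tensorial, validity at one point in one chart suffices for validity everywhere. Everything else is routine substitution, and the final formula follows immediately once (\ref{chr 1}) is in hand.
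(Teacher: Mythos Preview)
Your proposal is correct and follows essentially the same approach as the paper: the paper's proof invokes precisely the three ingredients you identify (the coordinate formula for $\Gamma^k_{ij}$, the evolution equation of the metric, and the tensoriality of $\dt\Gamma^k_{ij}$), and your intermediate identity (\ref{chr 1}) together with the normal-coordinate verification simply spells out what the paper leaves implicit.
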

\begin{proof}
This follows directly from
$$\Gamma^k_{ij}=\frac{1}{2}\uu gkl\bigl(g_{il,j}+g_{jl,i}-g_{ij,l}\bigr)\,,$$
the evolution equation of the metric
and the fact that $\dt\Gamma^k_{ij}$ is a tensor (though $\Gamma^k_{ij}$ is not).
\end{proof}

Next we compute the evolution equation for the second fundamental tensor
$A=\udd A\alpha ij\frac{\partial}{\partial y^\alpha}\otimes dx^i\otimes dx ^j$
\begin{lemma}
The second fundamental tensor $A$ evolves under the mean curvature flow by
\begin{equation}\label{evol sec}
\nabla_{\dt}\udd A\alpha ij=\nabla_i\nabla_jH^\alpha-C^k_{ij}\ud F\alpha k+\uddd R\alpha\delta\gamma\epsilon \ud F\delta jH^\gamma \ud F\epsilon i\,,
\end{equation}
where $C^k_{ij}=\dt\Gamma^k_{ij}$.
\end{lemma}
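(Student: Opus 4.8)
The plan is to start from the coordinate description of the second fundamental tensor as the second covariant derivative of $F$, that is $\udd A\alpha ij=\nabla_i\ud F\alpha j$, where $\ud F\alpha j$ are the components of $DF$ and $\nabla_i$ is the full connection on $F^*TN\otimes T^*M$ (this is just Gau\ss' formula (\ref{sec 4})). Regarding $F$ as a smooth map from the space-time manifold $\tilde M=M\times[0,T)$ into $N$, I would compute $\nabla_{\dt}\udd A\alpha ij=\nabla_{\dt}\nabla_i\ud F\alpha j$ by commuting the covariant time derivative past the spatial one. Because $[\dt,\partial_i]=0$ on the product manifold, the commutator is purely a curvature term,
$$\nabla_{\dt}\nabla_i\ud F\alpha j=\nabla_i\nabla_{\dt}\ud F\alpha j+R(\dt,\partial_i)\ud F\alpha j,$$
where $R$ denotes the curvature of $F^*TN\otimes T^*M$ evaluated in the mixed time-space direction.

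For the leading term I would invoke the identity $\nabla_{\dt}\ud F\alpha j=\nabla_j\frac{dF^\alpha}{dt}=\nabla_jH^\alpha$, which follows from the symmetry of the space-time second fundamental tensor $\tilde A$ and was already used in the derivation of (\ref{evol 2b}); this immediately gives $\nabla_i\nabla_{\dt}\ud F\alpha j=\nabla_i\nabla_jH^\alpha$. The remaining task is to split the curvature $R$ according to the two tensor factors. On the $T^*M$-factor (the lower index $j$), the time-direction connection on purely spatial tensors is trivial, so a short direct computation of $\nabla_{\dt}\nabla_iT_j-\nabla_i\nabla_{\dt}T_j$ for a spatial one-form $T$ yields exactly $-(\dt\Gamma^k_{ij})T_k=-C^k_{ij}T_k$; with $T_j=\ud F\alpha j$ this produces the term $-C^k_{ij}\ud F\alpha k$. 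On the $F^*TN$-factor (the upper index $\alpha$), the curvature in directions $(\dt,\partial_i)$ is the pullback of the ambient curvature $R^N$ evaluated on $F_*\dt=\overrightarrow H$ and $F_*\partial_i=F_i$, which contributes $(R^N(\overrightarrow H,F_i)F_j)^\alpha=\uddd R\alpha\delta\gamma\epsilon\ud F\delta jH^\gamma\ud F\epsilon i$. Summing the three contributions yields the asserted formula (\ref{evol sec}).

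\emph{The main obstacle} is the bookkeeping of the curvature over space-time with the correct conventions and signs. In particular, the crucial point is to recognize that the mixed time-space curvature of the $T^*M$-factor is governed entirely by the time derivative of the Christoffel symbols: although $\Gamma^k_{ij}$ is not a tensor, its derivative $C^k_{ij}=\dt\Gamma^k_{ij}$ is (as noted in the corollary on the evolution of the Christoffel symbols), and it appears naturally here as a genuine curvature component. One must also make sure the index placement in the ambient curvature term matches the convention $R^N(X,Y)Z=\uddd R\alpha\delta\gamma\epsilon Z^\delta X^\gamma Y^\epsilon$, so that the third argument $F_j$, the velocity $\overrightarrow H$, and the direction $F_i$ enter in the stated order.
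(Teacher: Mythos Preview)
Your argument is correct, and it takes a genuinely different route from the paper's. The paper proceeds by brute force in coordinates: it differentiates Gau\ss' formula
\[
\udd A\alpha ij=\frac{\partial^2 F^\alpha}{\partial x^i\partial x^j}-\Gamma^k_{ij}\ud F\alpha k+\Gamma^\alpha_{\beta\gamma}\ud F\beta i\ud F\gamma j
\]
term by term with respect to $t$, then separately writes out $\nabla_i\nabla_j V^\alpha$ for a general section $V\in\Gamma(F^*TN)$ in full coordinate detail, sets $V=\overrightarrow H$, and compares the two expressions line by line until the residual terms assemble into $\uddd R\alpha\delta\gamma\epsilon\ud F\delta jH^\gamma\ud F\epsilon i$; only at the very end is (\ref{cov 1}) invoked to pass from $\dt\udd A\alpha ij$ to $\nabla_\dt\udd A\alpha ij$. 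Your approach instead packages everything into the single commutator identity $\nabla_\dt\nabla_i-\nabla_i\nabla_\dt=R(\dt,\partial_i)$ on $F^*TN\otimes T^*M$ over space-time, and then reads off the two curvature contributions (the $-C^k_{ij}$ from the $T^*M$-factor, the ambient $R^N$ from the $F^*TN$-factor) structurally. This is cleaner and more conceptual, and it scales better to higher-order tensors; the paper's computation is more elementary in that it never appeals to the abstract curvature of a product connection, only to partial derivatives and Christoffel symbols, at the cost of a page of bookkeeping. Both arrive at exactly the same formula with the same index conventions.
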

\begin{proof}
Since
$$\udd A\alpha ij=\frac{\partial^2 F^\alpha}{\partial x^i\partial x^j}
-\Gamma^k_{ij}\ud F\alpha k
+\Gamma^\alpha_{\beta\gamma}\ud F\beta i\ud F\gamma j$$
we get
\begin{eqnarray}
\dt A^\alpha_{ij}=&&\frac{\partial^2 H^\alpha}{\partial x^i\partial x^j}
-\Gamma^k_{ij}\frac{\partial H^\alpha}{\partial x^k}
+\Gamma^\alpha_{\beta\gamma}\left(\frac{\partial H^\beta}{\partial x^i}\ud F\gamma j+
\ud F\beta i\frac{\partial H^\gamma}{\partial x^j}\right)\nonumber\\
&&-\dt\Gamma^k_{ij}\ud F\alpha k+\Gamma^\alpha_{\beta\gamma,\delta}H^\delta
\ud F\beta i\ud F\gamma j\,.
\end{eqnarray}

To continue we need some covariant expressions. For a section 
$V=V^\alpha\frac{\partial}{\partial y^\alpha}\in \Gamma\left(F^{*}TN\right)$
we have
$$\nabla_jV^\alpha=
\frac{\partial V^\alpha}{\partial x^j}+\Gamma^\alpha_{\beta\gamma}
\ud F\beta jV^\gamma$$
and then
\begin{eqnarray}
\nabla_i\nabla_jV^\alpha
&=&\frac{\partial}{\partial x^i}
\left(\frac{\partial V^\alpha}{\partial x^j}+\Gamma^\alpha_{\beta\gamma}
\ud F\beta jV^\gamma\right)
-\Gamma_{ij}^k\left(\frac{\partial V^\alpha}{\partial x^k}+\Gamma^\alpha_{\beta\gamma}
\ud F\beta kV^\gamma\right)\nonumber\\
&&+\Gamma^\alpha_{\beta\gamma}\ud F\beta i
\left(\frac{\partial V^\gamma}{\partial x^j}+\Gamma^\gamma_{\delta\epsilon}
\ud F\delta jV^\epsilon\right)\nonumber\\
&=&\frac{\partial ^2V^\alpha}{\partial x^i\partial x^j}
+\Gamma^\alpha_{\beta\gamma,\delta}\ud F\delta i\ud F\beta jV^\gamma
+\Gamma^\alpha_{\beta\gamma}\frac{\partial^2 F^\beta}{\partial x^i\partial x^j}V^\gamma
+\Gamma^\alpha_{\beta\gamma}\ud F\beta j\frac{\partial V^\gamma}{\partial x^i}\nonumber\\
&&-\Gamma_{ij}^k\left(\frac{\partial V^\alpha}{\partial x^k}+\Gamma^\alpha_{\beta\gamma}
\ud F\beta kV^\gamma\right)+\Gamma^\alpha_{\beta\gamma}\ud F\beta i
\left(\frac{\partial V^\gamma}{\partial x^j}+\Gamma^\gamma_{\delta\epsilon}
\ud F\delta jV^\epsilon\right)\nonumber\\
&=&\frac{\partial ^2V^\alpha}{\partial x^i\partial x^j}
-\Gamma_{ij}^k\frac{\partial V^\alpha}{\partial x^k}
+\Gamma^\alpha_{\beta\gamma}\left(\frac{\partial V^\beta}{\partial x^i}\ud F\gamma j+
\ud F\beta i\frac{\partial V^\gamma}{\partial x^j}\right)\nonumber\\
&&+\Gamma^\alpha_{\beta\gamma}\frac{\partial^2 F^\beta}{\partial x^i\partial x^j}V^\gamma
-\Gamma^k_{ij}\Gamma^\alpha_{\beta\gamma}
\ud F\beta kV^\gamma
+\Gamma^\alpha_{\beta\epsilon}\Gamma^\beta_{\delta\gamma}
\ud F\epsilon i
\ud F\delta jV^\gamma
\nonumber\\
&&+\Gamma^\alpha_{\beta\gamma,\delta}\ud F\delta i\ud F\beta jV^\gamma
\nonumber\\
&=&\frac{\partial ^2V^\alpha}{\partial x^i\partial x^j}
-\Gamma_{ij}^k\frac{\partial V^\alpha}{\partial x^k}
+\Gamma^\alpha_{\beta\gamma}\left(\frac{\partial V^\beta}{\partial x^i}\ud F\gamma j+
\ud F\beta i\frac{\partial V^\gamma}{\partial x^j}\right)\nonumber\\
&&+\Gamma^\alpha_{\beta\gamma}V^\gamma \udd A\beta ij
+\left(\Gamma^\alpha_{\beta\epsilon}\Gamma^\beta_{\delta\gamma}
-\Gamma^\alpha_{\beta\gamma}\Gamma^\beta_{\delta\epsilon}\right)
\ud F\epsilon i
\ud F\delta jV^\gamma
+\Gamma^\alpha_{\beta\gamma,\delta}\ud F\delta i\ud F\beta jV^\gamma\,,\nonumber
\end{eqnarray}
where we have used $\Gamma^\alpha_{\beta\gamma}=\Gamma^\alpha_{\gamma\beta}$ several times.

Applying this to $V^\alpha=H^\alpha$ we conclude
\begin{eqnarray}
\dt \udd A\alpha ij
&=&\nabla_i\nabla_jH^\alpha-\dt\Gamma^k_{ij}\ud F\alpha k+\Gamma^\alpha_{\beta\gamma,\delta}H^\delta
\ud F\beta i\ud F\gamma j\nonumber\\
&&-\Gamma^\alpha_{\beta\gamma}H^\gamma \udd A\beta ij
-\left(\Gamma^\alpha_{\beta\epsilon}\Gamma^\beta_{\delta\gamma}
-\Gamma^\alpha_{\beta\gamma}\Gamma^\beta_{\delta\epsilon}\right)
\ud F\epsilon i\ud F\delta jH^\gamma
-\Gamma^\alpha_{\beta\gamma,\delta}\ud F\delta i\ud F\beta jH^\gamma\nonumber\nonumber\\
&=&\nabla_i\nabla_jH^\alpha-\dt\Gamma^k_{ij}\ud F\alpha k
-\Gamma^\alpha_{\beta\gamma}H^\gamma \udd A\beta ij\nonumber\\
&&
+\left(\Gamma^\alpha_{\epsilon\delta,\gamma}
-\Gamma^\alpha_{\gamma\delta,\epsilon}
-\Gamma^\alpha_{\beta\epsilon}\Gamma^\beta_{\delta\gamma}
+\Gamma^\alpha_{\beta\gamma}\Gamma^\beta_{\delta\epsilon}\right)
\ud F\epsilon i\ud F\delta jH^\gamma\nonumber\\
&=&\nabla_i\nabla_jH^\alpha
-\dt\Gamma^k_{ij}\ud F\alpha k
-\Gamma^\alpha_{\beta\gamma}H^\gamma \udd A\beta ij
+\uddd R\alpha\delta\gamma\epsilon
\ud F\epsilon i\ud F\delta jH^\gamma\,.\nonumber
\end{eqnarray}
The result then follows from (\ref{cov 1}).
\end{proof}

\begin{corollary}
Under the mean curvature flow the mean curvature satisfies the following evolution equations:
\begin{eqnarray}
\nabla_\dt H^\alpha&=&\Delta H^\alpha-\uu gij C^k_{ij}\ud F\alpha k+\uddd R\alpha\delta\gamma\epsilon \ud F\epsilon i \uu F\delta i H^\gamma
+2\langle A_{kl}, \overrightarrow H\rangle \uuu A\alpha kl\quad\quad\label{evol mean1}\\
\nabla_\dt|\overrightarrow H|^2&=&\Delta|\overrightarrow H|^2-2|\nabla \overrightarrow H|^2+4\langle A^{ij},\overrightarrow H\rangle\langle A_{ij},\overrightarrow H\rangle\nonumber\\
&&+2\dddd R\alpha\beta\gamma\delta H^\alpha \ud F\beta i H^\gamma \uu F\delta i\\\label{evol mean2}
&=&\Delta|\overrightarrow H|^2-2|\nabla^\perp \overrightarrow H|^2+2\langle A^{ij},\overrightarrow H\rangle\langle A_{ij},\overrightarrow H\rangle\nonumber\\
&&+2\dddd R\alpha\beta\gamma\delta H^\alpha \ud F\beta i H^\gamma \uu F\delta i\label{evol mean3}
\end{eqnarray}
\end{corollary}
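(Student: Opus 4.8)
The plan is to obtain all three identities from the already-proven evolution equation (\ref{evol sec}) for the full second fundamental tensor $A$, first by tracing to reach $\overrightarrow H$ and then by contracting with $\overrightarrow H$ itself.

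For (\ref{evol mean1}) I would start from $H^\alpha=\uu gij\udd A\alpha ij$ and, because the induced metric depends on $t$, write
$$\nabla_\dt H^\alpha=(\nabla_\dt\uu gij)\udd A\alpha ij+\uu gij\nabla_\dt\udd A\alpha ij\,.$$
The inverse-metric evolution follows from (\ref{evol 2}) through $\nabla_\dt\uu gij=-\uu gik\uu gjl\nabla_\dt\dd gkl=2\uu gik\uu gjl\langle\overrightarrow H,A_{kl}\rangle$, so the first summand produces exactly the term $2\langle A_{kl},\overrightarrow H\rangle\uuu A\alpha kl$. The second summand is the $\uu gij$-trace of (\ref{evol sec}): the Hessian traces to $\Delta H^\alpha$, the term $-C^k_{ij}\ud F\alpha k$ traces to $-\uu gij C^k_{ij}\ud F\alpha k$, and the ambient-curvature term traces to $\uddd R\alpha\delta\gamma\epsilon\ud F\epsilon i\uu F\delta iH^\gamma$. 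Adding the two summands gives (\ref{evol mean1}).

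For (\ref{evol mean2}) I would use that the connection on $F^*TN$ is metric, so $\nabla_\dt|\overrightarrow H|^2=2\langle\nabla_\dt\overrightarrow H,\overrightarrow H\rangle$, and substitute (\ref{evol mean1}). Three observations finish the computation: the Weitzenb\"ock-type identity $\Delta|\overrightarrow H|^2=2\langle\Delta\overrightarrow H,\overrightarrow H\rangle+2|\nabla\overrightarrow H|^2$ rewrites the Laplacian contribution as $\Delta|\overrightarrow H|^2-2|\nabla\overrightarrow H|^2$; the term $-\uu gij C^k_{ij}\ud F\alpha k$ drops out after contraction with $\overrightarrow H$ since $\overrightarrow H$ is normal and $F_k$ tangent, whence $\langle\overrightarrow H,F_k\rangle=0$; and the quadratic term contributes $4\langle A^{ij},\overrightarrow H\rangle\langle A_{ij},\overrightarrow H\rangle$. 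The curvature term becomes $2\dddd R\alpha\beta\gamma\delta H^\alpha\ud F\beta iH^\gamma\uu F\delta i$ after invoking the pair symmetry $R_{\alpha\beta\gamma\delta}=R_{\gamma\delta\alpha\beta}$ to relocate the raised index to the last slot.

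The third identity (\ref{evol mean3}) is a reformulation of (\ref{evol mean2}) using the normal connection. Proceeding exactly as in the splitting of $\nabla A$ carried out before (\ref{simons 3}), I would decompose $\nabla_i\overrightarrow H=\nabla^\perp_i\overrightarrow H-\uu gpq\langle\overrightarrow H,A_{ip}\rangle F_q$, the tangential part arising from $\langle\nabla_i\overrightarrow H,F_p\rangle=-\langle\overrightarrow H,A_{ip}\rangle$. Orthogonality of the two parts then yields $|\nabla\overrightarrow H|^2=|\nabla^\perp\overrightarrow H|^2+\langle\overrightarrow H,A_{ij}\rangle\langle\overrightarrow H,A^{ij}\rangle$, and substituting this into (\ref{evol mean2}) simultaneously replaces $-2|\nabla\overrightarrow H|^2$ by $-2|\nabla^\perp\overrightarrow H|^2$ and lowers the coefficient $4$ to $2$. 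The only genuine difficulty is bookkeeping: one must keep the rough Laplacian on $F^*TN$ distinct from $\Delta$ acting on scalars, and the full connection $\nabla$ distinct from $\nabla^\perp$; beyond this, the proof is a routine trace and contraction of (\ref{evol sec}).
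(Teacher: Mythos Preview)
Your proof is correct and follows essentially the same route as the paper: trace (\ref{evol sec}) against $\uu gij$ using the evolution of the inverse metric to obtain (\ref{evol mean1}), contract with $\overrightarrow H$ and use $\langle\overrightarrow H,F_k\rangle=0$ together with $\nabla_\dt\dd g\alpha\beta=0$ for (\ref{evol mean2}), and then split $\nabla_k\overrightarrow H$ into its normal and tangential parts for (\ref{evol mean3}). The only superfluous remark is the appeal to pair symmetry of $R^N$ in the curvature term of (\ref{evol mean2}); a relabeling of dummy indices already does the job.
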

\begin{proof}
The first equation follows from
$H^\alpha=\uu gij\udd A\alpha ij$, equations  (\ref{evol 2}), (\ref{evol sec})  and
$$\nabla_\dt\uu gij=-\uu gik\uu gjl\nabla_\dt\dd gkl\,.$$
The second equation then follows from $|\overrightarrow H|^2=\dd g\alpha\beta H^\alpha H^\beta$,
$\dd g\alpha\beta \ud F\alpha i H^\beta=0$ and
$$\nabla_\dt\dd g\alpha\beta=\nabla_\gamma\dd g\alpha\beta H^\gamma=0\,.$$
Finally, (\ref{evol mean3}) follows from
\begin{eqnarray}
\nabla_k \overrightarrow H
&=&\nabla^\perp_k \overrightarrow H+\uu gij\langle\nabla_k \overrightarrow H, F_i\rangle F_j\nonumber\\
&=&\nabla^\perp_k \overrightarrow H-\uu gij\langle \overrightarrow H,\nabla_k F_i\rangle F_j\nonumber\\
&=&\nabla^\perp_k \overrightarrow H-\uu gij\langle\overrightarrow H, A_{ki}\rangle F_j\nonumber
\end{eqnarray}
and $\langle\nabla_k^\perp\overrightarrow H,F_j\rangle =0$.
\end{proof}
From the evolution equation of $\udd A\alpha ij$ we obtain in the same way
\begin{eqnarray}
\nabla_\dt|A|^2
&=&2\langle \nabla^2\overrightarrow H,A\rangle+4\langle\overrightarrow H,A^{ij}\rangle\langle A_{ik}, \du A jk\rangle\nonumber\\
&&+2\dddd R\alpha\beta\gamma\delta \uuu A\alpha kl\ud F\beta k H^\gamma \ud F\delta l\label{evol sec2}
\end{eqnarray}
Applying Simons' identity (\ref{simons 3}) we get
\begin{eqnarray}
\nabla_\dt|A|^2
&=&\Delta|A|^2-2|\nabla^\perp A|^2\nonumber\\
&&+\bigl|\langle \dd Aij,\dd Akl\rangle-\langle\dd Ail,\dd Ajk\rangle\bigr|^2+\bigl|\udd A\alpha ik\udu  A\beta jk-\udd A\beta ik\udu A\alpha jk\bigr|^2\nonumber\\
&&+2\bigl|\langle \overrightarrow H,\dd Aij\rangle-\langle\dd Aik,\du Ajk\rangle\bigr|^2-2\bigl|\langle \overrightarrow H,\dd Aij\rangle\bigr|^2\nonumber\\
&&+4\dddd R\alpha\beta\gamma\delta \ud F\alpha k \ud F\beta i \ud F\gamma l \ud F\delta j
\left(\langle A^{ij},A^{kl}\rangle-\uu gkl\langle A^{ip},\du Apj\rangle\right)\nonumber\\
&&+2\dddd R\alpha\beta\gamma\delta \uuu A\alpha kl
\left(4\udd A\beta ik\ud F\gamma l\uu F\delta i+\ud F\beta l\ud F\gamma kH^\delta
+\ud F\beta i\udd A\gamma lk\uu F\delta i\right)\nonumber\\
&&
+2\left(\nabla_\epsilon\dddd R\alpha\beta\gamma\delta+\nabla_\gamma\dddd R\alpha\delta\beta\epsilon\right)
\ud F\epsilon i\ud F\beta l\ud F\gamma k\uu F\delta i\uuu A\alpha kl\nonumber\\
&&+4\langle\overrightarrow H,A^{ij}\rangle\langle A_{ik}, \du A jk\rangle\nonumber\\
&&+2\dddd R\alpha\beta\gamma\delta \uuu A\alpha kl\ud F\beta k H^\gamma \ud F\delta l\nonumber\\
&=&\Delta|A|^2-2|\nabla^\perp A|^2\nonumber\\
&&+2\bigl|\langle \dd Aij,\dd Akl\rangle\bigr|^2+\bigl|\udd A\alpha ik\udu  A\beta jk-\udd A\beta ik\udu A\alpha jk\bigr|^2\nonumber\\
&&+4\dddd R\alpha\beta\gamma\delta \ud F\alpha k \ud F\beta i \ud F\gamma l \ud F\delta j
\left(\langle A^{ij},A^{kl}\rangle-\uu gkl\langle A^{ip},\du Apj\rangle\right)\nonumber\\
&&+2\dddd R\alpha\beta\gamma\delta \uuu A\alpha kl
\left(4\udd A\beta ik\ud F\gamma l\uu F\delta i
+\ud F\beta i\udd A\gamma lk\uu F\delta i\right)\nonumber\\
&&
+2\left(\nabla_\epsilon\dddd R\alpha\beta\gamma\delta+\nabla_\gamma\dddd R\alpha\delta\beta\epsilon\right)
\ud F\epsilon i\ud F\beta l\ud F\gamma k\uu F\delta i\uuu A\alpha kl\nonumber
\end{eqnarray}
Thus we have shown
\begin{corollary}
Under the mean curvature flow the quantity $|A|^2$  satisfies the following evolution equation:
\begin{eqnarray}
\nabla_\dt|A|^2
&=&\Delta|A|^2-2|\nabla^\perp A|^2\label{evol sec3}\\
&&+2\bigl|\langle \dd Aij,\dd Akl\rangle\bigr|^2+\bigl|\udd A\alpha ik\udu  A\beta jk-\udd A\beta ik\udu A\alpha jk\bigr|^2\nonumber\\
&&+4\dddd R\alpha\beta\gamma\delta \ud F\alpha k \ud F\beta i \ud F\gamma l \ud F\delta j
\left(\langle A^{ij},A^{kl}\rangle-\uu gkl\langle A^{ip},\du Apj\rangle\right)\nonumber\\
&&+2\dddd R\alpha\beta\gamma\delta \uuu A\alpha kl
\left(4\udd A\beta ik\ud F\gamma l\uu F\delta i
+\ud F\beta i\udd A\gamma lk\uu F\delta i\right)\nonumber\\
&&
+2\left(\nabla_\epsilon\dddd R\alpha\beta\gamma\delta+\nabla_\gamma\dddd R\alpha\delta\beta\epsilon\right)
\ud F\epsilon i\ud F\beta l\ud F\gamma k\uu F\delta i\uuu A\alpha kl\,.\nonumber
\end{eqnarray}
\end{corollary}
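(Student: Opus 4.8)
The plan is to differentiate the scalar $|A|^2=\dd g\alpha\beta\uu gik\uu gjl\udd A\alpha ij\udd A\beta kl$ along the flow and then reduce the highest-order term by means of the second Simons' identity \ref{simons 3}. Three kinds of factor occur and each is governed by a rule already at hand: the ambient metric $\dd g\alpha\beta$ is parallel, so $\nabla_\dt\dd g\alpha\beta=0$; the two inverse metrics satisfy $\nabla_\dt\uu gij=-\uu gik\uu gjl\nabla_\dt\dd gkl=2\langle\overrightarrow H,A^{ij}\rangle$ by the metric evolution \ref{evol 2}; and the two copies of $A$ evolve by \ref{evol sec}. In \ref{evol sec} the tangential contribution $-C^k_{ij}\ud F\alpha k$ is annihilated upon contraction with $A^{ij}$ because $A$ is normal (equation \ref{sec 2}), so only the Hessian $\nabla_i\nabla_jH^\alpha$ and the ambient curvature term survive. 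Collecting the Leibniz contributions — $2\langle\nabla^2\overrightarrow H,A\rangle$ from the Hessian, $4\langle\overrightarrow H,A^{ij}\rangle\langle A_{ik},\du Ajk\rangle$ from the two inverse-metric derivatives, and the curvature term — reproduces exactly the intermediate equation \ref{evol sec2}.

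The decisive step is that the leading term $2\langle A,\nabla^2\overrightarrow H\rangle$ of \ref{evol sec2} is precisely the quantity evaluated in \ref{simons 3}. Substituting \ref{simons 3} converts this Hessian contraction into the parabolic part $\Delta|A|^2-2|\nabla^\perp A|^2$, a package of terms of order four in $A$, and a list of ambient-curvature terms. Most of the curvature terms are transcribed unchanged into \ref{evol sec3}; the one curvature term of \ref{simons 3} carrying an explicit factor $H^\delta$ cancels against the curvature term of \ref{evol sec2} once the symmetries of $R^N$ are used. The genuine work is then to reorganize the terms quartic in $A$.

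For that reorganization I would first isolate the terms containing $\overrightarrow H$. Expanding $2\bigl|\langle\overrightarrow H,\dd Aij\rangle-\langle\dd Aik,\du Ajk\rangle\bigr|^2$ from \ref{simons 3} produces $2\bigl|\langle\overrightarrow H,\dd Aij\rangle\bigr|^2$, a cross term, and $2\bigl|\langle\dd Aik,\du Ajk\rangle\bigr|^2$. The first cancels the $-2\bigl|\langle\overrightarrow H,\dd Aij\rangle\bigr|^2$ already present in \ref{simons 3}, and the cross term cancels the $+4\langle\overrightarrow H,A^{ij}\rangle\langle A_{ik},\du Ajk\rangle$ carried over from \ref{evol sec2}; hence no mean-curvature dependence survives among the algebraic terms, leaving only $2\bigl|\langle\dd Aik,\du Ajk\rangle\bigr|^2$. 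It then remains to combine this with the expansion of $\bigl|\langle\dd Aij,\dd Akl\rangle-\langle\dd Ail,\dd Ajk\rangle\bigr|^2$, using the symmetry \ref{sec 1} of $A$ in its lower indices to identify equal contractions, so that the surviving purely-algebraic terms reassemble into the manifestly nonnegative norms $\bigl|\langle\dd Aij,\dd Akl\rangle\bigr|^2$ and $\bigl|\udd A\alpha ik\udu A\beta jk-\udd A\beta ik\udu A\alpha jk\bigr|^2$ of \ref{evol sec3}.

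The main obstacle is exactly this last bookkeeping. Several of the quartic contractions look different but coincide only after the symmetry \ref{sec 1} and normality \ref{sec 2} of $A$ are used together with a relabeling of the summation indices, and one must track the signs through every expanded square. It is precisely here — in matching the handful of $A^4$-contractions and pinning down their numerical coefficients — that a stray factor is most easily introduced; none of the individual steps is deep, but assembling them into the two squared norms without a coefficient error is the delicate point.
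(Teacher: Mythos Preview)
Your proposal is correct and follows essentially the same route as the paper: derive the intermediate identity \eqref{evol sec2} from the evolution equations of $g$ and $A$, substitute the second Simons' identity \eqref{simons 3} for $2\langle A,\nabla^2\overrightarrow H\rangle$, observe that the two $H$-carrying curvature terms cancel by the symmetries of $R^N$, expand $2\bigl|\langle\overrightarrow H,A_{ij}\rangle-\langle A_{ik},\du Ajk\rangle\bigr|^2$ to eliminate all remaining $\overrightarrow H$-dependence among the algebraic terms, and finally regroup the surviving $A^4$-contractions into the two squared norms of \eqref{evol sec3}. Your diagnosis that the only delicate point is the bookkeeping of the quartic coefficients is exactly where the paper's own argument is compressed to a single ``$=$''.
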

These general evolution equations simplify in more special geometric situations. E.g., if
the codimension is one, then $\udd A\alpha ij=\nu^\alpha\dd hij$ (cf. subsection \ref{sec hyper})
implies $|\nabla^\perp A|^2=|\nabla h|^2$, $|A|^2=|h|^2$ and
\begin{eqnarray}
\nabla_\dt|h|^2&=&\Delta|h|^2-2|\nabla h|^2+2|h|^2(|h|^2+\overline{\operatorname{Ric}}(\nu,\nu))\nonumber\\
&&-4(\uu hij\du hjm\dddu{\bar R}mlil-\uu hij\uu hlm\dddd{\bar R}milj)\nonumber\\
&&+2\uu hij(\bar\nabla_j\dddu{\bar R}0lil+\bar\nabla_l\dddu{\bar R}0ijl)\,,\label{eq sechyper}
\end{eqnarray}
where
$$\dddd{\bar R}milj:=\dddd R\alpha\beta\gamma\delta \ud F\alpha m\ud F\beta i\ud F\gamma l\ud F\delta j,\quad\overline{\operatorname{Ric}}(\nu,\nu):=\dddd R\alpha\beta\gamma\delta \nu^\alpha\ud F\beta i\nu^\gamma\uu F\delta i$$
and
$$\bar\nabla_l\dddu{\bar R}0ijl:=\nabla_\alpha\dddd R\beta\gamma\delta\epsilon\ud F\alpha l
\nu^\beta\ud F\gamma i\ud F\delta j\uu F\epsilon l.$$
Equation (\ref{eq sechyper}) is Corollary 3.5 (ii) in \cite{Huisken86}. Note that there is a plus sign in the last
line of (\ref{eq sechyper}) since our unit normal is inward pointing and the unit normal in
\cite{Huisken86} is outward directed.

\subsection{Long-time existence}\label{sec longtime}~

In general long-time existence of solutions cannot be expected as the following 
well-known theorem shows:
\begin{proposition}
Suppose $F_0:M\to \real{n}$ is a smooth immersion of a closed $m$-dimensional manifold $M$.
Then the maximal time $T$ of existence of a smooth solution $F:M\times[0,T)\to\real{n}$
of the mean curvature flow with initial immersion $F_0$ is finite.
\end{proposition}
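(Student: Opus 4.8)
The plan is to track the squared distance of the moving submanifold from the origin, $u:=|F|^2$, and to show that it obeys a heat equation with a strictly negative \emph{constant} source term. Since $u\ge 0$ and $M$ is closed, such a source drains $u$ to zero in finite time, which forces the maximal existence time to be bounded by $\tfrac{1}{2m}\max_M|F_0|^2$. Here we identify points of $\real{n}$ with their position vectors, so that $F$ is simultaneously the immersion and an $\real{n}$-valued map.

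First I would record the classical identity $\Delta F=\overrightarrow H$, valid for immersions into $\real{n}$. In Euclidean coordinates the ambient Christoffel symbols $\Gamma^\alpha_{\beta\gamma}$ vanish, so Gau\ss' formula (\ref{sec 4}) reduces to $\udd A\alpha ij=\partial_i\partial_j F^\alpha-\Gamma^k_{ij}\partial_k F^\alpha$; tracing with $\uu gij$ and using (\ref{sec 5}) yields $H^\alpha=\uu gij\udd A\alpha ij=\Delta F^\alpha$. Next I would differentiate $u=\dd g\alpha\beta F^\alpha F^\beta$ in space. A direct computation, together with the isometric-immersion identity $\uu gij\langle F_i,F_j\rangle=\uu gij\dd gij=m$ and $\Delta F=\overrightarrow H$, gives
\[
\Delta|F|^2=2\langle F,\overrightarrow H\rangle+2m.
\]
On the other hand the flow equation $\tfrac{dF}{dt}=\overrightarrow H$ gives $\dt|F|^2=2\langle F,\overrightarrow H\rangle$. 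Subtracting the two yields the key identity
\[
\left(\dt-\Delta\right)|F|^2=-2m.
\]

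The final step is the maximum principle on the closed manifold $M$. Put $u_{\max}(t):=\max_{p\in M}|F(p,t)|^2$. By Hamilton's trick $u_{\max}$ is Lipschitz, and at any point where the spatial maximum is attained one has $\Delta|F|^2\le 0$, so the identity above gives $\dt u_{\max}(t)\le -2m$ for almost every $t$. Integrating, $u_{\max}(t)\le u_{\max}(0)-2mt$. Because $u_{\max}(t)\ge 0$ for all $t$, the maximal existence time must satisfy
\[
T\le\frac{1}{2m}\,u_{\max}(0)=\frac{1}{2m}\max_{M}|F_0|^2<\infty.
\]

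The hard part is not any single computation but rather the appeal to compactness hidden in the last step: for a complete non-compact $M$ the spatial maximum of $|F|^2$ need not be attained, the forward difference quotients of $u_{\max}$ need not be controlled, and indeed the conclusion fails, since entire minimal graphs furnish stationary eternal solutions. This is exactly why the standing assumption that $M$ be closed is indispensable here; it is also the reason the same argument, transplanted to a general ambient Riemannian manifold, breaks down, since there neither $\Delta F=\overrightarrow H$ nor the clean constant $-2m$ is available.
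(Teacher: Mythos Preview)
Your proof is correct and essentially identical to the paper's. The paper packages the same computation by setting $f:=|F|^2+2mt$, observes that $\dt f=\Delta f$, and applies the maximum principle to $f$; this is exactly your identity $(\dt-\Delta)|F|^2=-2m$ rewritten, and both arguments yield the same bound $T\le\frac{1}{2m}\max_M|F_0|^2$.
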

\begin{proof}
The proof easily follows by applying the parabolic maximum principle to the function
$f:=|F|^2+2mt$ which satisfies the  evolution equation
$$\frac{d}{dt}f=\Delta f\,.$$
Hence $T\le\frac{1}{2m}\max|F_0|^2$ and the inequality is sharp since equality is attained for round spheres
centered at the origin.
\end{proof}
This result is no longer true for complete submanifolds since for example for entire
$m$-dimensional graphs in $\real{m+1}$ one has long-time existence (see
\cite{Ecker-Huisken89}). In addition, the result can fail, if the ambient space is
a Riemannian manifold since in some cases one gets long-time existence and convergence (for example in \cite{Grayson89}, \cites{Smoczyk02, Smoczyk04}, \cite{Smoczyk-Wang02}, 
\cite{Wang02}, \cite{Tsui-Wang04}).

The next well known theorem holds in any case:
\begin{proposition}\label{theo blowup}
Let $M$ be a closed manifold and $F:M\times[0,T)\to (N,g)$ a smooth solution of the mean curvature flow in a complete (compact or non-compact) Riemannian manifold $(N,g)$. Suppose the maximal time of existence $T$ is finite. Then
$$\limsup_{t\to T}\max_{M_t}|A|^2=\infty\,.$$
Here, $M_t:=F(M,t)$.
\end{proposition}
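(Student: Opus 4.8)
The plan is to prove the contrapositive: assuming $|A|^2$ stays bounded up to $T$, I will show that the flow extends smoothly past $T$, contradicting the maximality of $T$. So suppose $\sup_{[0,T)}\max_M|A|^2\le C_0<\infty$. Since $\overrightarrow H=\operatorname{trace}A$ we get $|\overrightarrow H|^2\le m|A|^2\le mC_0$, so the velocity of the flow is uniformly bounded. As $M$ is compact, the trajectories $t\mapsto F(p,t)$ are then uniformly Lipschitz, hence $F_t$ converges uniformly to a continuous limit $F_T$; in particular the image $\overline{\bigcup_t F_t(M)}$ lies in a compact set $K\subset N$, on which $R^N$ and all its covariant derivatives are bounded. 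Moreover, from the metric evolution (\ref{evol 2}) one has $|\partial_t\, g(v,v)|\le 2|\overrightarrow H|\,|A(v,v)|\le C\,g(v,v)$ for every tangent vector $v$, so integrating over the finite interval $[0,T)$ keeps the metrics $g_{ij}(t)$ uniformly equivalent to $g_{ij}(0)$, and in particular uniformly nondegenerate.

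The decisive step is to upgrade the single bound on $|A|$ to uniform bounds on all covariant derivatives $|\nabla^k A|$ on $[0,T)$, which I would obtain by induction on $k$ via the maximum principle. Differentiating (\ref{evol sec}) $k$ times and commuting derivatives, $\nabla^k A$ satisfies an evolution equation of the schematic form
\[
\nabla_\dt \nabla^k A=\Delta\nabla^k A+\sum_{p+q+r=k}\nabla^p A*\nabla^q A*\nabla^r A+(\text{ambient curvature terms}),
\]
where $*$ denotes a contraction with the metric. Pairing with $\nabla^k A$, the only cubic term carrying the top-order factor is $\nabla^k A*A*A$, controlled by $C_0|\nabla^k A|^2$; every other term involves only factors $\nabla^j A$ with $j<k$ (bounded by the inductive hypothesis) together with $R^N$ and its derivatives (bounded on $K$), hence contributes at most $C_k|\nabla^k A|^2+C_k'$ after pairing. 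This yields
\[
\nabla_\dt |\nabla^k A|^2\le \Delta|\nabla^k A|^2+C_k|\nabla^k A|^2+C_k',
\]
and since $M$ is closed the parabolic maximum principle applied to $\phi_k(t):=\max_M|\nabla^k A|^2(\cdot,t)$ gives $\phi_k'\le C_k\phi_k+C_k'$; as $T<\infty$, Gronwall's inequality bounds $\phi_k$ on $[0,T)$, closing the induction.

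With uniform bounds on the metric and on all $|\nabla^k A|$, standard interpolation and the evolution equations (which express the time derivatives of the geometric quantities through the $\nabla^j A$ and the bounded ambient geometry) show that $F_t$ converges in $C^\infty$ as $t\to T$ to a smooth limit $F_T:M\to N$. Because the metrics remain uniformly nondegenerate, $F_T$ is again an immersion. Applying the short-time existence and uniqueness result (Proposition \ref{prop short time}) with initial datum $F_T$ produces a smooth solution on $[T,T+\delta)$, which by uniqueness glues with $F$ to a smooth solution on $[0,T+\delta)$. This contradicts the maximality of $T$, and therefore $\limsup_{t\to T}\max_{M_t}|A|^2=\infty$.

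I expect the main obstacle to be the inductive derivative estimates of the second paragraph: one must understand the precise structure of the evolution equations for $\nabla^k A$ well enough to verify that the top-order factor enters only linearly (after pairing) with coefficients controlled by the $|A|$-bound and the compactness of the image, so that the reaction terms can never overwhelm the good negative Laplacian term in the maximum-principle argument.
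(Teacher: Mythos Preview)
Your proposal is correct and follows essentially the same approach as the paper. The paper's own proof is only a sketch referencing Huisken \cites{Huisken84, Huisken90}, stating that the key step is to bound all $|\nabla^k A|$ by induction via the maximum principle applied to their evolution equations; you have written out exactly this argument, together with the standard surrounding machinery (uniform control of the image and the metrics, $C^\infty$-convergence to a limiting immersion, and re-application of short-time existence) that the paper leaves implicit.
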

\begin{remark}
The same result also holds in some other situations. For example one can easily see that under suitable assumptions on the solution one can allow $N$ to have boundary.
\end{remark}
\begin{proof}
The theorem is one of the ``folklore" results in mean curvature flow for which
a rigorous proof in all dimensions and codimensions has not been written up
in detail but can be carried out in the same way as the corresponding proof for
hypersurfaces. This has been done by Huisken in \cites{Huisken84, Huisken90} and is again based on the maximum principle. The key observation is, that all higher derivatives $\nabla^kA$ of the
second fundamental tensor are uniformly bounded, once $A$ is uniformly bounded. This can be shown by induction and has originally been carried out for hypersurfaces using $L^p$-estimates in \cite{Huisken84}. For compact
hypersurfaces there exists a more direct argument involving the maximum principle applied to the evolution equations of $|A|^2$ in (\ref{evol sec3})
and $|\nabla^kA|^2$. The method can be found in the proof of Proposition 2.3 in \cite{Huisken90} and works in the same way in any codimension and in any
ambient Riemannian manifold with bounded geometry. 
\end{proof}

A corollary is
\begin{corollary}\label{cor long time}
Let $M$ be a closed manifold and $F:M\times[0,T)\to N$ a smooth solution of
the mean curvature flow on a maximal time interval in a  complete Riemannian
manifold $(N,g)$. If $\sup_{t\in[0,T)}\max_{M_t}|A|<\infty$, then $T=\infty$.
\end{corollary}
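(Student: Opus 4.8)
The plan is to obtain the corollary as the immediate contrapositive of the blow-up criterion in Proposition \ref{theo blowup}. I would argue by contradiction. Suppose the conclusion fails, i.e. that the maximal existence time $T$ is finite. By hypothesis there is a constant $C<\infty$ with
$$\sup_{t\in[0,T)}\max_{M_t}|A|=C\,,$$
and therefore $\max_{M_t}|A|^2\le C^2$ for every $t\in[0,T)$, which gives $\limsup_{t\to T}\max_{M_t}|A|^2\le C^2<\infty$. On the other hand, since $M$ is closed and $F:M\times[0,T)\to N$ is a smooth solution of the mean curvature flow in the complete Riemannian manifold $(N,g)$ with finite maximal time $T$, Proposition \ref{theo blowup} asserts that $\limsup_{t\to T}\max_{M_t}|A|^2=\infty$. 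These two statements are incompatible, so the assumption $T<\infty$ is untenable and we conclude $T=\infty$.

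The point is that all of the analytic work has already been carried out in the proof of Proposition \ref{theo blowup}: the genuine difficulty -- deriving uniform bounds on all higher derivatives $\nabla^kA$ of the second fundamental tensor from a uniform bound on $A$ alone, so that the flow can be smoothly continued past a finite $T$ whenever $|A|$ stays bounded -- is precisely what powers the blow-up alternative. Consequently there is no real obstacle left at the level of the corollary; the only (trivial) step is the observation that a finite supremum of $|A|$ over space-time forces a finite $\limsup$ of $\max_{M_t}|A|^2$ as $t\to T$, which is immediate. The sole thing to verify is that the hypotheses of Proposition \ref{theo blowup} are genuinely met here, namely that $M$ is closed and $(N,g)$ is complete, both of which are assumed in the statement of the corollary.
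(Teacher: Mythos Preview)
Your argument is correct and is precisely the approach the paper intends: the corollary is stated in the paper with no separate proof, as it is the immediate contrapositive of Proposition~\ref{theo blowup}. Your second paragraph correctly identifies where the actual analytic content lies, and the verification of hypotheses is exactly what is needed.
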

Note that long-time existence does not automatically imply convergence. For example, consider
the surface of revolution $N\subset\real{3}$ generated by the function $f(x)=1+e^{-x}$.
A circle $\gamma$ of revolution moving by curve shortening
flow on $N$ will then exist for all $t\in[0,\infty)$ with
uniformly bounded curvature but it will not converge since it tends off to infinity. Some results on the
regularity of curve shortening  flow in high codimension have been derived in \cite{Chen-Ma07}.

However, in some geometries once long-time existence is established one can use the Arzela-Ascoli theorem to extract  convergent subsequences.

\subsection{Singularities}\label{sec singularities}~

If a solution $F:M\times[0,T)\to N$ of the mean curvature flow exists only for finite time, then Proposition \ref{theo blowup} implies the formation of a singularity. The question then
arises how to understand the geometric and analytic nature of these
singularities. From Proposition \ref{theo blowup} we know that
$$\limsup_{t\to T}\max_{M_t}|A|^2=\infty\,.$$
One possible approach to classify singularities is to distinguish them by
the blow-up rate of $\max_{M_t}|A|^2$. The next definition originally appeared
in \cite{Huisken90} in the context of hypersurfaces in $\real{m+1}$ but can be
stated in the same way for arbitrary mean curvature flows.
\begin{definition}\label{def blowup}
Suppose $F:M\times[0,T)\to N$ is a smooth solution of the mean curvature flow with $T<\infty$
and
$$\limsup_{t\to T}\max_{M_t}|A|^2=\infty\,.$$ 
\begin{enumerate}[a)]
\item
A point $q\in N$ is called a blow-up point, if there exists a point $p\in M$ such that
$$\lim_{t\to T}F(p,t)=q\,,\quad\lim_{t\to T}|A(p,t)|=\infty\,.$$
\item
One says that $M$ develops a singularity of Type I, if there exists a constant $c>0$
such that
$$\max_{M_t}|A|^2\le\frac{c}{T-t}\,,\forall t\in[0,T)\,.$$
Otherwise one calls the singularity of Type II.
\end{enumerate}
\end{definition}
So if $q$ is a blow-up point then for $t\to T$ a singularity of Type I or Type II will form at $q\in N$ 
(and perhaps at other points as well).

In this context it is worth noting that the flow need not have a blow-up point in the sense of Definition
\ref{def blowup}, even if the second fundamental form blows up, e.g. the ambient space
might have boundary or the singularity might form at spatial infinity. For this and other reasons it 
is  appropriate to come up with more definitions. In \cite{Stone}, Stone introduced special and general
singular points.
\begin{definition}
\begin{enumerate}[a)]
\item
A point $p\in M$ is called a special singular point of the mean curvature flow, as $t\to T$, if there exists a sequence of times $t_k\to T$, such that
$$\limsup_{k\to\infty}|A|(p,t_k)=\infty.$$
\item
A point $p\in M$ is called a general singular point of the mean curvature flow, as $t\to T$, if there exists a sequence of times $t_k\to T$ and a sequence of points $p_k\to p$, such that
$$\limsup_{k\to\infty}|A|(p_k,t_k)=\infty.$$
\end{enumerate}
\end{definition}
The reason to introduce the blow-up rate in Definition \ref{def blowup}
is that for closed
submanifolds in euclidean space one always has an analogue inequality in the other direction, i.e.
\begin{equation}\label{eq lowest}
\max_{M_t}|A|^2\ge\frac{\tilde c}{T-t}
\end{equation}
for some positive number $\tilde c$ (note that this does not necessarily hold, if the ambient
space $N$ differs from $\real{n}$). So in some sense singularities of Type I have the best controlled
blow-up rate of $|A|^2$. Because of (\ref{eq lowest})
one may actually refine the definition of special and general
singular points for the mean curvature flow in $\real{n}$, as was originally done by Stone in \cite{Stone}.
Instead of requiring $\limsup_{k\to\infty}|A|(p_k,t_k)=\infty$ one can define a general
singular point $p\in M$ such that there exists some $\delta>0$ and a sequence $(p_k,t_k)\to (p,T)$
with
$$|A|^2(p_k,t_k)\ge\frac{\delta}{T-t_k}.$$
A sequence $(p_k,t_k)$ with this property is called an essential blow-up sequence.
Although (\ref{eq lowest}) gives a minimum blow-up rate for $\max_{p\in M}|A|^2(p,t)$ in the euclidean space,
as $t$ approaches $T$, this does not rule out the possibility that, while $|A|^2(p,t)\ge\frac{\delta}{T-t}$ in some part of $M$, the blow-up of $|A|^2$ might simultaneously occur at some slower rate (say like $(T-t)^{-\alpha}, \alpha\in(0,1)$) somewhere else. Such "slowly forming singularities" would not be detected
by a Type I blow-up procedure (see below) since the rescaling would be too fast. It is therefore
interesting to understand, if this phenomenon occurs at all. As was recently shown by Le and Sesum 
\cite{Le-Sesum10b} this does not happen in the case of Type I singularities of hypersurfaces in
$\real{m+1}$ and all notions of singular sets defined in \cite{Stone} coincide. In particular they
prove that the blow-up rate of the mean curvature must coincide with the blow-up rate of the
second fundamental form, if a singularity of Type I is forming. We also mention that there exist
many similarities between the formation of singularities in mean curvature flow and Ricci flow
(see \cite{Enders-Mueller-Topping} for a nice overview on Type I singularities in Ricci flow).

{\bf Type I:}
Let us now assume that $q\in\real{n}$ is a blow-up point of Type I of 
$F:M\times[0,T)\to\real{n}$ and that $\dim M=m$. Huisken introduced the following rescaling
technique in \cite{Huisken90} for hypersurfaces, 
but obviously it can be done in the same way for any codimension in $\real{n}$:
Define an immersion $\tilde F:M\times[-1/2\log T,\infty)\to\real{n}$ by
$$\tilde F(\cdot,s):=(2(T-t))^{-1/2}(F(\cdot,t)-q)\,,\quad s(t)=-\frac{1}{2}\log(T-t)\,.$$
One can then compute that $\tilde F$ satisfies the rescaled flow equation
$$\frac{d}{ds}\tilde F=\tilde{\overrightarrow H}+\tilde F\,.$$
Since by assumption $|A|^2\le c/(T-t)$ the second fundamental tensor $\tilde A$ of the
rescaling is uniformly bounded in space and time. To study the geometric and analytic behavior
of the rescaled immersions $\tilde M_s=\tilde F(M,s)$, Huisken proved a monotonicity
formula for hypersurfaces in $\real{n}$ moving by mean curvature. The corresponding result
in arbitrary dimension and codimension is as follows:  For $t_0\in\real{}$ let
$$\rho:\real{n}\times\real{}\setminus\{t_0\}:=\frac{1}{(4\pi(t_0-t))^{\frac{m}{2}}}e^{-\frac{|y|^2}{4(t_0-t)}}\,.$$
Then $\rho_{|\real{m}\times\real{}\setminus\{t_0\}}$ is the backward heat kernel of $\real{m}$ at
$(0,t_0)$ and the following monotonicity formula holds
\begin{proposition}[Monotonicity formula (cf. Huisken \cite{Huisken90})]
Let $F:M\times[0,T)\to\real{n}$ be a smooth solution of the mean curvature flow and let $M$ be
closed and $m$-dimensional. Then
$$\dt\int_M\rho(F(p,t),t)d\mu(p,t)=-\int_M\left|\overrightarrow H(p,t)+\frac{F^\perp(p,t)}{2(t_0-t)}\right|^2\rho(F(p,t),t)d\mu(p,t)\,,$$
where $d\mu(\cdot,t)$ denotes the volume element on $M$ induced by the immersion
$F(\cdot,t)$ and $F^\perp$ denotes the normal part of the position vector $F$.
\end{proposition}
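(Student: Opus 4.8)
The plan is to differentiate the weighted area $\int_M\rho(F,t)\,d\mu_t$ directly under the integral sign, keeping track of the two sources of time dependence: the integrand $\rho(F(p,t),t)$, which varies both through the explicit $t$ in the kernel and through the motion of the footpoint $F(p,t)$, and the induced volume element $d\mu_t$, whose evolution is already recorded in (\ref{evol 3}) as $\frac{d}{dt}d\mu_t=-|\overrightarrow H|^2d\mu_t$. Writing $\theta:=t_0-t$, I would first compute the chain-rule derivative $\frac{d}{dt}\rho(F,t)=(\partial_t\rho)(F,t)+\langle D\rho(F,t),\overrightarrow H\rangle$, using $\frac{dF}{dt}=\overrightarrow H$. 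Since $\rho=(4\pi\theta)^{-m/2}e^{-|y|^2/(4\theta)}$, the Euclidean gradient is $D\rho=-\frac{y}{2\theta}\rho$ and the explicit time derivative is $\partial_t\rho=\rho\bigl(\frac{m}{2\theta}-\frac{|y|^2}{4\theta^2}\bigr)$; because $\overrightarrow H$ is normal, $\langle D\rho,\overrightarrow H\rangle=-\frac{\langle F^\perp,\overrightarrow H\rangle}{2\theta}\rho$.

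The decisive step is to trade the awkward zeroth-order terms in $\partial_t\rho$ for a tangential Laplacian that integrates to zero. For a function $f$ on $\mathbb{R}^n$ restricted to $M$ one has the standard identity $\Delta_M f=\sum_{i=1}^m D^2f(e_i,e_i)+\langle Df,\overrightarrow H\rangle$, where $(e_i)$ is an orthonormal frame of the tangent space. Applying this to $\rho$ and computing the Euclidean Hessian $D^2\rho=\rho\bigl(\frac{y\otimes y}{4\theta^2}-\frac{\mathrm{Id}}{2\theta}\bigr)$, I would trace over the $m$-dimensional tangent space to get $\sum_i D^2\rho(e_i,e_i)=\rho\bigl(\frac{|F^\top|^2}{4\theta^2}-\frac{m}{2\theta}\bigr)$. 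Comparing this with $\partial_t\rho$ and splitting $|F|^2=|F^\top|^2+|F^\perp|^2$ yields the key algebraic identity $\partial_t\rho+\sum_i D^2\rho(e_i,e_i)=-\frac{|F^\perp|^2}{4\theta^2}\rho$, i.e. $\rho$ solves the backward heat equation tangentially up to a normal correction. This is exactly the point where the $\mathbb{R}^m$-normalization (the exponent $m/2$ rather than $n/2$) gets used, and recognizing that this matching is what one must arrange is, to my mind, the conceptual heart of the argument and the only genuinely nontrivial obstacle; everything else is bookkeeping.

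Finally I would substitute $\sum_i D^2\rho(e_i,e_i)=\Delta_M\rho-\langle D\rho,\overrightarrow H\rangle=\Delta_M\rho+\frac{\langle F^\perp,\overrightarrow H\rangle}{2\theta}\rho$ to convert the Hessian trace into $\Delta_M\rho$ plus an explicit normal term, collecting everything into $\frac{d}{dt}\rho(F,t)=-\Delta_M\rho-\frac{\langle F^\perp,\overrightarrow H\rangle}{\theta}\rho-\frac{|F^\perp|^2}{4\theta^2}\rho$. Adding the volume contribution $-|\overrightarrow H|^2\rho$ and integrating over the closed manifold $M$, the divergence theorem discards $\int_M\Delta_M\rho\,d\mu=0$, and the remaining integrand is precisely the negative of the perfect square $\bigl|\overrightarrow H+\frac{F^\perp}{2\theta}\bigr|^2\rho=\bigl(|\overrightarrow H|^2+\frac{\langle\overrightarrow H,F^\perp\rangle}{\theta}+\frac{|F^\perp|^2}{4\theta^2}\bigr)\rho$. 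With $\theta=t_0-t$ this is the asserted identity. Compactness of $M$ is what legitimizes both differentiating under the integral and dropping the total-divergence term, and the completion of the square at the end is mechanical once the tangential backward-heat structure has been extracted.
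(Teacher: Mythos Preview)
Your proof is correct and follows essentially the same route as the paper: compute $\frac{d}{dt}\rho$ and $\Delta_M\rho$ from the explicit Euclidean derivatives of the backward heat kernel, combine with the volume evolution $\frac{d}{dt}d\mu=-|\overrightarrow H|^2d\mu$, discard $\int_M\Delta_M\rho\,d\mu$ via the divergence theorem on the closed $M$, and complete the square. Your intermediate packaging of the identity $\partial_t\rho+\sum_iD^2\rho(e_i,e_i)=-\frac{|F^\perp|^2}{4\theta^2}\rho$ is a slight reorganization, but the computation and the ingredients are the same as the paper's.
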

The proof is a simple consequence of
$$\dt\rho=\left(\frac{m}{2(t_0-t)}-\frac{|F|^2}{4(t_0-t)^2}-\frac{\langle F,\overrightarrow H\rangle}{2(t_0-t)}\right)\rho$$
and
$$\Delta\rho=\left(-\frac{m}{2(t_0-t)}+\frac{|F^\top|^2}{4(t_0-t)^2}-\frac{\langle F,\overrightarrow H\rangle}{2(t_0-t)}\right)\rho$$
so that by the divergence theorem and from $\dt d\mu=-|\overrightarrow H|^2d\mu$ we get
$$\dt\int_M\rho d\mu=\int_M(\dt\rho+\Delta\rho-|\overrightarrow H|^2\rho)d\mu=-\int_M\left|\overrightarrow H+\frac{F^\perp}{2(t_0-t)}\right|^2\rho d\mu\,.$$
Though the proof is easy, it is not obvious to look at the backward heat kernel when studying
the mean curvature flow. This nice formula was used by Huisken to study the asymptotic behavior of
the Type I blow-up and he proved the following beautiful theorem for hypersurfaces which again
holds in arbitrary codimension
\begin{proposition}[Type I blow-up (cf. Huisken \cite{Huisken90})]\label{theo 1blowup}
Suppose $F:M\times[0,T)\to\real{n}$ is a smooth solution of the mean curvature flow
of a closed $m$-dimensional smooth manifold $M$. Further assume that $T<\infty$ is finite and that $0\in\real{n}$ is a Type I blow-up point as $t\to T$. Then for any sequence $s_j$  there is a subsequence $s_{j_k}$ such that the
rescaled immersed submanifolds $\tilde M_{s_{j_k}}$ converge smoothly to an immersed nonempty
limiting submanifold $\tilde M_\infty$. Any such limit satisfies the equation
\begin{equation}\label{eq self}
\tilde {\overrightarrow H}+\tilde F^\perp=0\,.
\end{equation}
\end{proposition}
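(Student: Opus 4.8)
The plan is to follow Huisken's strategy for the Type~I blow-up in three stages: first convert the Type~I hypothesis into uniform geometric bounds for the rescaled flow, then extract a smoothly convergent subsequence by Arzel\`a--Ascoli, and finally use the monotonicity formula to identify the limit as a solution of (\ref{eq self}). Throughout I would work with the rescaled immersion $\tilde F(\cdot,s):=(2(T-t))^{-1/2}(F(\cdot,t)-q)$ with $q=0$ and $s=-\tfrac12\log(T-t)$, which satisfies the rescaled flow $\frac{d}{ds}\tilde F=\tilde{\overrightarrow H}+\tilde F$.

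\emph{Uniform bounds.} Since the rescaling acts on the second fundamental tensor by $\tilde A=(2(T-t))^{1/2}A$, the Type~I condition $\max_{M_t}|A|^2\le c/(T-t)$ translates into the time-independent bound $|\tilde A|^2\le 2c$ on $M\times[s_0,\infty)$. Feeding this into the interior-estimate bootstrap already invoked in the proof of Proposition~\ref{theo blowup} --- applied now to the rescaled flow, whose additional lower-order term $\tilde F$ is itself controlled by the same bound --- I would obtain uniform bounds on every covariant derivative $\tilde\nabla^k\tilde A$, uniformly in space and in $s$.

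\emph{Compactness.} Rewriting the monotonicity formula in the rescaled variables with $t_0=T$, the Gaussian-weighted area $\tilde\theta(s):=\int_M\tilde\rho\,d\tilde\mu_s$, with $\tilde\rho=(4\pi)^{-m/2}e^{-|\tilde F|^2/2}$, is monotone non-increasing and satisfies $\frac{d}{ds}\tilde\theta(s)=-\int_M|\tilde{\overrightarrow H}+\tilde F^\perp|^2\tilde\rho\,d\tilde\mu_s$. As $\tilde\theta\ge 0$, it converges as $s\to\infty$ and the total $s$-integral of the right-hand side over $[s_0,\infty)$ is finite. Given a sequence $s_j$, I would consider the shifted flows $\tilde F_j(\cdot,s):=\tilde F(\cdot,s+s_j)$; the uniform $C^\infty$ bounds, together with a lower density bound stemming from (\ref{eq lowest}) that prevents the area from vanishing near the blow-up point, let Arzel\`a--Ascoli produce a subsequence converging in $C^\infty_{loc}$ to a limit flow $\tilde F_\infty$ whose time-slice at $s=0$ is the sought nonempty immersed limit $\tilde M_\infty$.

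\emph{Identifying the limit.} For each fixed $s$ one has $\tilde\theta(s+s_j)\to\lim_{s\to\infty}\tilde\theta(s)$ as $j\to\infty$, so the Gaussian-weighted area of the limit flow $\tilde\theta_\infty(s)$ is constant and its $s$-derivative vanishes. Applying the monotonicity formula to $\tilde F_\infty$ then forces $\int_M|\tilde{\overrightarrow H}_\infty+\tilde F_\infty^\perp|^2\tilde\rho\,d\tilde\mu=0$, hence $\tilde{\overrightarrow H}_\infty+\tilde F_\infty^\perp\equiv 0$, which is exactly (\ref{eq self}) on $\tilde M_\infty$. I expect the main obstacle to be the compactness step: turning the curvature-derivative bounds into genuine smooth subconvergence of immersions in the \emph{non-compact} target $\real{n}$ --- controlling local graph representations, ruling out escape of area to infinity (where $\tilde\rho\to 0$) so that the weighted areas pass to the limit, and securing nonemptiness of $\tilde M_\infty$ through the lower blow-up rate --- rather than the formally straightforward monotonicity computation.
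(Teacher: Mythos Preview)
The paper is a survey and does not supply its own proof of this proposition; it simply states the result with the attribution ``cf.\ Huisken \cite{Huisken90}'' and moves on to remarks about the limit. Your outline is precisely Huisken's original strategy from \cite{Huisken90}: the Type~I bound gives uniform control on $|\tilde A|$ and, via the bootstrap from Proposition~\ref{theo blowup}, on all $|\tilde\nabla^k\tilde A|$; the rescaled monotonicity formula then yields both the compactness (finite weighted area, integrability of $|\tilde{\overrightarrow H}+\tilde F^\perp|^2$) and the identification of the limit as a self-shrinker. Your assessment of where the real work lies---passing to a nonempty smooth limit in the non-compact setting and justifying that the weighted areas converge---is accurate and matches what Huisken carries out in detail.
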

Note that by Proposition \ref{theo isom} it is no restriction to assume that the blow-up point
coincides with the origin.
In general the limiting submanifold $\tilde M_\infty$ need not have the same topology
as $M$, for example compactness might no longer hold. In addition it is unclear, if all solutions of
(\ref{eq self}) occur as blow-up limits of Type I singularities of compact submanifolds.

A solution of (\ref{eq self}) is called a self-similar shrinking solution (or self-shrinker for short)
of the mean curvature flow.
Namely, one easily proves that a solution of (\ref{eq self}) 
shrinks homothetically under the mean curvature flow and that there is a smooth 
positive function $c$ explicitly computable from the initial data and depending on 
the rescaled time $s$  such that 
$$\tilde {\overrightarrow H}_s+c(s)\tilde F^\perp_s=0\,.$$

There exists another interesting class of self-similar solutions of the mean curvature flow. These
are characterized by the elliptic equation
\begin{equation}\label{eq selfexp}
\overrightarrow H-F^\perp=0
\end{equation}
and are called self-expanders.
In \cite{Ecker-Huisken89} Ecker and Huisken proved that entire graphs in $\real{m+1}$
(in codimension $1$) approach asymptotically expanding self-similar solutions if they 
satisfy a certain growth condition at infinity. Later Stavrou \cite{Stavrou98} proved
the same result under the weaker assumption that the graph has bounded gradient and a unique cone at infinity. Furthermore, he gave a characterization of expanding self-similar solutions to mean curvature flow with bounded gradient.

A classification of self-shrinking or self-expanding solutions is far from being complete. 
However there are
some special situations for which one can say something. Self-shrinking curves have
been completely classified by Abresch and Langer in \cite{Abresch-Langer86}. Though their
proof has been carried out for the curve shortening flow in $\real{2}$ the result also
applies to arbitrary codimension since (\ref{eq self}) becomes an ODE for $m=1$ and the
solutions are uniquely determined by their position and velocity vectors so that all $1$-dimensional
solutions of (\ref{eq self}) must be planar. For hypersurfaces there exists a beautiful theorem by 
Huisken in \cite{Huisken93} that describes all self-shrinking hypersurfaces with nonnegative
(scalar) mean curvature. Later this result could be generalized by the author in the following sense 

\begin{proposition}[\cite{Smoczyk05}]
For a closed immersion $M^m\subset\real{n}$, $m\ge 2$
are equivalent:
\begin{enumerate}[a)]
\item $M$ is a self-shrinker of the mean curvature flow with nowhere vanishing mean curvature
vector $\overrightarrow H$ and the principal normal vector $\nu:=\overrightarrow H/|\overrightarrow H|$ is parallel in the normal bundle.
\item $M$ is a minimal immersion in a round sphere.
\end{enumerate}
\end{proposition}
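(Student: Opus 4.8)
The plan is to prove the two implications separately: (b)$\Rightarrow$(a) by a direct computation of the second fundamental form of a composite immersion, and (a)$\Rightarrow$(b) by reducing everything to Huisken's rigidity theorem \cite{Huisken93} for the \emph{scalar} second fundamental form in the direction of the principal normal.

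For (b)$\Rightarrow$(a), assume $M\subset S^{n-1}(r)\subset\real{n}$ is a minimal immersion into the round sphere of radius $r$ centered at the origin. Since the position vector $F$ is normal to the sphere it is normal to $M$, so $F=F^\perp$ and $|F|=r$. The second fundamental forms are related by $A(X,Y)=A^S(X,Y)-\tfrac{1}{r^2}\langle X,Y\rangle F$, where $A^S$ is the second fundamental form of $M$ inside the sphere. Tracing and using minimality ($\operatorname{trace}A^S=0$) gives $\overrightarrow H=-\tfrac{m}{r^2}F$, which is nowhere zero since $|F|=r>0$ and which satisfies the normalized self-shrinker equation $\overrightarrow H+F^\perp=0$ exactly when $r=\sqrt m$. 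Moreover $\nu=\overrightarrow H/|\overrightarrow H|=-F/r$, and $\nabla^\perp_X\nu=-\tfrac1r(\nabla_X F)^\perp=-\tfrac1r X^\perp=0$, so the principal normal is parallel. This is (a).

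For (a)$\Rightarrow$(b) I would set $h:=|\overrightarrow H|>0$, so $\overrightarrow H=h\nu$, and read the self-shrinker equation as $F^\perp=-\overrightarrow H=-h\nu$; in particular the normal part of $F$ points along $\nu$ and $\langle F,\nu\rangle=-h$. Differentiating $\overrightarrow H=-F^\perp$ and using $\nabla_X F=X$ gives the identity $\nabla^\perp_X\overrightarrow H=A(X,F^\top)$. Introducing the scalar second fundamental form $h_{ij}:=\langle A_{ij},\nu\rangle$ (with $\operatorname{trace}h_{ij}=\langle\overrightarrow H,\nu\rangle=h$) and projecting this identity onto the parallel field $\nu$ yields $\nabla_i h=h_{ij}(F^\top)^j$, while projecting onto normals $\mu\perp\nu$ gives $\langle A(X,F^\top),\mu\rangle=0$. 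Because $\nu$ is parallel in the normal bundle, the Codazzi equation in the flat ambient $\real{n}$ shows that $h_{ij}$ is a Codazzi tensor, exactly as for a hypersurface. Combining $\nabla^\perp_X\overrightarrow H=A(X,F^\top)$ with Codazzi produces the self-shrinker identity $\mathcal L^\perp\overrightarrow H=\overrightarrow H-\sum_{i,j}\langle\overrightarrow H,A_{ij}\rangle A^{ij}$ for the drift Laplacian $\mathcal L:=\Delta-\langle F,\nabla\,\cdot\,\rangle$, which on the closed manifold $M$ is self-adjoint with respect to the Gaussian weight $e^{-|F|^2/2}d\mu$. Taking the inner product with $\nu$ collapses this to the purely scalar equation $\mathcal L h=h(1-|h_{ij}|^2)$, formally identical to the one satisfied by the scalar mean curvature of a hypersurface self-shrinker. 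I would then run Huisken's maximum-principle argument for compact self-shrinkers with $H>0$ on the tensor $h_{ij}$ to conclude that the trace-free part $\hat h_{ij}:=h_{ij}-\tfrac hm g_{ij}$ vanishes and that $h\equiv\sqrt m$. Once $h$ is constant, $\nabla h=0$, and the identity $\nabla_i h=h_{ij}(F^\top)^j$ together with the nondegeneracy of the now-umbilic $h_{ij}$ forces $F^\top=0$; hence $|F|^2=h^2=m$ is constant, $M\subset S^{n-1}(\sqrt m)$, and the first-paragraph computation read backwards (with $\overrightarrow H=-F$) gives $\operatorname{trace}A^S=0$, i.e. $M$ is minimal in the sphere.

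The hard part is the higher-codimension rigidity for $h_{ij}$. Unlike the genuine hypersurface case, the Simons-type identity for $h_{ij}$ obtained by commuting covariant derivatives picks up extra terms involving the intrinsic curvature $R^M$, and through the Gauss equation $R^M$ couples to the remaining normal components $A^\mu$, $\mu\perp\nu$, of the second fundamental form. The essential work is to show, using the constraint $\langle A(X,F^\top),\mu\rangle=0$ coming from $\nabla^\perp_X\overrightarrow H=A(X,F^\top)$ together with the Gauss equation, that these additional curvature terms enter with a favorable sign, so that $\mathcal L|\hat h|^2$ has a sign (the dimension hypothesis $m\ge2$ entering here exactly as in \cite{Huisken93}) allowing the maximum principle on the closed manifold to conclude $\hat h\equiv0$. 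This sign analysis, which is precisely where the assumption that the principal normal be parallel is indispensable, is the crux of the argument in \cite{Smoczyk05}.
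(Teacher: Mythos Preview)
The survey you are reading does not actually prove this proposition; it merely quotes the result from \cite{Smoczyk05} and moves on. So there is no ``paper's own proof'' to compare your sketch against in this text.

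That said, your outline is sound where it is explicit. The implication (b)$\Rightarrow$(a) is correct as written (with the usual normalization $r=\sqrt m$). For (a)$\Rightarrow$(b) your derivation of $\nabla_i h=h_{ij}(F^\top)^j$, the Codazzi property of $h_{ij}$, and the scalar equation $\mathcal L h=h(1-|h_{ij}|^2)$ are all correct and are exactly the reductions one needs; the endgame you describe (umbilicity $\Rightarrow$ $h\equiv\sqrt m$ $\Rightarrow$ $F^\top=0$ $\Rightarrow$ $M\subset S^{n-1}(\sqrt m)$ minimal) is also right.

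What remains genuinely unproved in your write-up is precisely the step you flag yourself: the Huisken-type rigidity for the Codazzi tensor $h_{ij}$ in higher codimension. You correctly observe that the Simons identity for $h_{ij}$ picks up the intrinsic curvature $R^M$, which through the Gau\ss\ equation couples to the transverse pieces $\langle A_{ij},\mu\rangle$, $\mu\perp\nu$, and you correctly note that the parallelism of $\nu$ and the constraint $\langle A(\,\cdot\,,F^\top),\mu\rangle=0$ are what one must exploit. But you do not actually carry out this sign analysis; you defer it to \cite{Smoczyk05}. As a proof \emph{proposal} this is honest and points in the right direction, but as a proof it is incomplete: the whole content of the theorem in the nontrivial direction lives in that step. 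If you want a self-contained argument you will need to either reproduce the computation from \cite{Smoczyk05} showing the cross-terms have the right sign, or find an alternative route (for instance, working directly with integral identities for $h$ and $|h_{ij}|^2$ against the Gaussian weight on the closed $M$) that bypasses the full Simons identity.
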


In the same paper one finds a similar description for the non-compact case.

 Type I singularities usually occur when there
exists some kind of pinching of the second fundamental form and such situations occur quite  often
(cf. subsection \ref{sec special}). It is therefore surprising that there are situations, where one can exclude
Type I singularities at all. In \cite[Theorem 2.3.5]{Smoczyk99}  it was shown that there do not exist any
compact Lagrangian solutions of (\ref{eq self}) with trivial Maslov class $m_1=[H/\pi]=0$.
Wang \cite{Wang01} and Chen \& Li \cite{Chen-Li02} observed that finite time Type I singularities of the Lagrangian
mean curvature flow of
closed Lagrangian submanifolds can be excluded,
if the initial Lagrangian is almost calibrated in the sense that $*\operatorname{Re}(dz_{|M})>0$.
The condition to be almost calibrated is equivalent to the assumption that the Maslov class 
is trivial and that the Lagrangian angle $\alpha$ satisfies $\cos\alpha>0$. The difference of
the results of Wang, Chen and Li in \cites{Wang01, Chen-Li02} w.r.t.
the result in \cite{Smoczyk99} is, that the blow-up need not be compact
any more. 
Later Neves \cite{Neves07} extended this result to the case of zero Maslov class, i.e. to the case
where a globally defined Lagrangian angle $\alpha$ exists on $M$, thus removing the almost calibrated condition. In \cite[Theorem 1.9]{GSSZ07} we proved a classification result for 
Lagrangian self-shrinkers and expanders in case they are entire graphs with a growth condition
at infinity. In these cases Lagrangian self-similar solutions must be minimal Lagrangian cones.

Therefore when we study
the Lagrangian mean curvature flow of closed Lagrangian submanifolds with trivial Maslov class
we need to consider singularities of Type II only.

{\bf Type II:}
To study the shape of the submanifold near a singularity of Type II one can define a different
family of rescaled flows.  Following an idea of Hamilton \cite{Hamilton93} one can choose
a sequence $(p_k,t_k)$ as follows: For any integer $k\ge 1$ let $t_k\in[0,T-1/k], p_k\in M$
be such that
$$|A(p_k,t_k)|^2(T-\frac{1}{k}-t_k)=\max_{\tiny\begin{matrix}t\le T-1/k\\ p\in M\end{matrix}}|A(p,t)|^2(T-\frac{1}{k}-t)\,.$$
Furthermore one chooses
$$L_k=|A(p_k,t_k)|\,,\quad\alpha_k=-L_k^2t_k\,,\quad\omega_k=L_k^2(T-t_k-1/k)\,.$$
If the singularity is of Type II then one has
$$t_k\to T\,,\quad L_k\to\infty\,,\quad\alpha_k\to-\infty\,,\quad\omega_k\to\infty\,.$$
Instead of $|A|$ one may use other quantities in the definition of these sequences, if it's known that they blow-up
with a certain rate as $t\to T$.
For example, in \cite{Huisken-Sinestrari99} the mean curvature $H$ was used in
the case of mean convex hypersurfaces in $\real{m+1}$.

Then one can consider the following rescaling:
For any $k\ge 1$, let $M_{k,\tau}$ be the family of submanifolds defined by the
immersions
$$F_k(\cdot,\tau):=L_k(F(\cdot,L_k^{-2}\tau+t_k)-F(p_k,t_k))\,,\quad\tau\in[\alpha_k,\omega_k]\,.$$
The proper choice of the blow-up quantity ($|A|, H$ or similar) in the definition of the rescaling 
will be essential to describe its behavior. Besides this rescaling technique there exist other methods
to rescale singularities and the proper choice of the rescaling procedure depends on the particular
situation in which the flow is considered. A nice reference for some of the scaling techniques is \cite{Ecker04}.

If $M$ is compact and develops a Type II singularity
then a subsequence of the flows $M_{k,\tau}$ converges smoothly to an eternal mean curvature
flow $\tilde M_\tau$ defined for all $\tau\in\real{}$. Then a classification of Type II singularities
depends on the classification of eternal solutions of the mean curvature flow.

In $\real{2}$ the only convex eternal solution (up to scaling)
of the mean curvature flow is given by the ``grim reaper"
$$y=-\log\cos x/\pi\,.$$
The grim reaper is a translating soliton of the mean curvature flow, i.e. it satisfies the geometric
PDE
$$\overrightarrow H=V^\perp\,,$$
for some fixed vector $V\in\real{n}$.
A translating soliton moves with constant speed in direction of  $V$.

In \cite{Angenent-Velazquez97} the authors constructed some particular solutions of the mean curvature flow that develop Type II singularities. In $\real{2}$ examples of curves that
develop a Type II singularity are given by some cardioids \cite{Angenent91}.
Using a Harnack inequality, Hamilton \cite{Hamilton95b} proved that any eternal convex solution of the mean curvature
flow of hypersurfaces in $\real{m+1}$ must be a translating soliton, if it
assumes its maximal curvature at some point in space-time.
In \cite{Chen-Jian-Liu05} the authors study whether such convex translating solutions
are rotationally symmetric, and if every
2-dimensional rotationally symmetric translating soliton is strictly convex.

Various different notions of weak solution have been developed to extend the flow beyond the
singular time $T$, including the geometric measure theoretic solutions of Brakke 
\cite{Brakke} and the level set solutions of Chen, Giga \& Goto \cite{Chen-Giga-Goto91}
and Evans \& Spruck \cite{Evans-Spruck91}, which were subsequently studied
further by Ilmanen \cite{Ilmanen92}. In \cite{Huisken-Sinestrari09}
Huisken and Sinestrari define such a notion based on a surgery procedure.

\section{Special results in higher codimension}\label{sec special}
In this chapter we mention the most important results in mean curvature
flow that depend on more specific geometric situations and we will focus
on results in higher codimension, especially on graphs and results in Lagrangian mean curvature flow.
\subsection{Preserved classes of immersions}
\begin{definition}
Let $\mathscr{I}$ be the class of smooth $m$-dimensional immersions into a Riemannian manifold $(N,g)$
and suppose $\mathscr{F}\subset\mathscr{I}$ is a subclass. We say that $\mathscr{F}$
is a preserved class under the mean curvature flow, if for any solution
$F_t:M\to N$, $t\in[0,T)$ of the mean curvature flow with $(F_0:M\to N)\in\mathscr{F}$ we also have $(F_t:M\to N)\in\mathscr{F}$
for all $t\in[0,T)$.
\end{definition}

Preserved classes of the mean curvature flow are very important since one can often prove special results within these classes. Many classes can be expressed in terms of algebraic properties
of the second fundamental form and in general it is a hard problem to detect those classes.
We give a number of examples
\begin{example}~

\begin{enumerate}[a)]
\item
$\mathscr{F}_1:=\{\text{Convex hypersurfaces in }\real{m+1}\}$
\item
$\mathscr{F}_2:=\{\text{Mean convex hypersurfaces in }\real{m+1}\text{, i.e. $H>0$}\}$
\item
$\mathscr{F}_3:=\{\text{Embedded hypersurfaces in Riemannian manifolds}\}$
\item
$\mathscr{F}_4:=\{\text{Hypersurfaces in $\real{m+1}$ as entire graphs over a flat plane}\}$
\item
$\mathscr{F}_5:=\{\text{Lagrangian immersions in K\"ahler-Einstein manifolds}\}$
\end{enumerate}
\end{example}
To prove that classes are preserved one often uses the parabolic maximum principle (at least in the compact case). Besides the classical maximum principle for scalar quantities there exists an important maximum principle for bilinear forms
due to Richard Hamilton that was originally proven in \cite{Ham82-2}
and improved in \cite{Ham86}.

Another very important property is the pinching property of certain classes of immersions in $\real{n}$.
\begin{definition}
Let $F:M\to\real{n}$ be a (smooth) immersion. We say that the second fundamental form
$A$ of $F$ is $\delta$-pinched, if the inequality
$$|A|^2\le\delta|\overrightarrow H|^2$$
holds everywhere on $M$.
\end{definition} 
From
$$0\le \left|A-\frac{1}{m}\overrightarrow H\otimes F^*g\right|^2=|A|^2-\frac{1}{m}|\overrightarrow H|^2$$
with $m=\dim M$ we immediately obtain that $\delta$ is bounded from below by $1/m$.

For hypersurfaces in $\real{m+1}$ it is known:
\begin{proposition}\label{theo 4.4}
Let $\delta\ge1/m$. The class of closed $\delta$-pinched hypersurfaces in $\real{m+1}$ is a preserved class under the mean curvature flow.
\end{proposition}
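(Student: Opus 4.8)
The plan is to run a parabolic maximum principle for the scalar quotient $f:=|h|^2/H^2$ and show that its spatial maximum is non-increasing, so that the bound $f\le\delta$ -- which is exactly the $\delta$-pinching condition in codimension one -- is preserved. First I would pass to the scalar hypersurface notation of subsection \ref{sec hyper}: writing $A=\nu\otimes h$ gives $|A|^2=|h|^2$ and $|\overrightarrow H|^2=H^2$, so the pinching reads $|h|^2\le\delta H^2$; the pointwise inequality $|h|^2\ge\frac1m H^2$ forces $\delta\ge1/m$, which is why that is the natural range. Specializing the general evolution equations to a flat ambient space $\real{m+1}$ and codimension one, so that every ambient-curvature term drops out, equation (\ref{eq sechyper}) becomes $\dt|h|^2=\Delta|h|^2-2|\nabla h|^2+2|h|^4$, while equation (\ref{evol mean3}) gives $\dt H^2=\Delta H^2-2|\nabla H|^2+2|h|^2H^2$.

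Before differentiating the quotient I must secure $H\neq0$, so that $f$ is a well-defined smooth function. I would first record that mean convexity is preserved: from the scalar evolution $\dt H=\Delta H+|h|^2H$, whose zeroth-order coefficient $|h|^2$ is non-negative, the maximum principle keeps $H>0$ for all $t\in[0,T)$ once $H>0$ holds at $t=0$. On this mean-convex subclass $f=|h|^2/H^2$ stays smooth on $M\times[0,T)$ and the pinching condition is precisely $f\le\delta$.

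The heart of the argument is the evolution of $f$. Using $|h|^2=fH^2$ to express $\Delta|h|^2$ through $\Delta f$, a direct computation gives
\begin{equation*}
\dt f-\Delta f=\frac{2}{H^2}\Bigl(f|\nabla H|^2-|\nabla h|^2\Bigr)+\frac{4}{H}\langle\nabla f,\nabla H\rangle,
\end{equation*}
the decisive point being that the two quartic reaction terms, both equal to $2|h|^4/H^2$, cancel identically; this is exactly why no upper restriction on $\delta$ is needed. At an interior spatial maximum of $f$ one has $\nabla f=0$ and $\Delta f\le0$, so the transport term vanishes and it remains to verify $f|\nabla H|^2\le|\nabla h|^2$ there. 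The first-order condition $\nabla f=0$ reads $\nabla|h|^2=f\,\nabla H^2$, i.e. $h^{ij}\nabla_kh_{ij}=fH\nabla_kH$ for each $k$; the Cauchy--Schwarz inequality $(h^{ij}\nabla_kh_{ij})^2\le|h|^2|\nabla_kh|^2$ together with $|h|^2=fH^2$ then yields $f(\nabla_kH)^2\le|\nabla_kh|^2$, and summing over $k$ gives the required estimate. Hence $\dt f\le0$ at the maximum, and the maximum principle -- legitimate by the first-time-violation principle since $M$ is closed -- shows $\max_Mf(\cdot,t)$ is non-increasing, so $f\le\delta$ persists.

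The step I expect to be the genuine obstacle is not this computation but the degeneracy at points where $H=0$: there $f$ is undefined and the Cauchy--Schwarz estimate collapses, giving no control of $\nabla h$ against $\nabla H$, while the crude bound $|\nabla H|^2\le m|\nabla h|^2$ only suffices at $\delta=1/m$. Making the statement fully precise therefore requires either restricting to the mean-convex subclass or arguing that a closed $\delta$-pinched hypersurface cannot have $H=0$ at all -- note that at such a point the pinching forces $h=0$, i.e. a totally geodesic point, which one would like to exclude for a closed immersion. Once $H>0$ is in force throughout, the argument above is the standard one going back to Huisken \cite{Huisken84}.
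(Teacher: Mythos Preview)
Your approach is exactly the one the paper indicates: apply the parabolic maximum principle to the quotient $f=|A|^2/H^2$ via its evolution equation. You have supplied far more detail than the paper's one-line proof---the cancellation of the reaction terms, the Cauchy--Schwarz estimate at a spatial maximum---and you correctly flag the $H=0$ degeneracy, which the paper leaves implicit.
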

\begin{proof}
This easily follows from the maximum principle and the evolution equation for $f:=|A|^2/H^2$.
\end{proof}
It can be shown that an $m$-dimensional submanifold in $\real{n}$ is $1/m$-pinched, if and only if it is either a part of a round sphere or a flat subspace.
Therefore closed pinched submanifolds are in some sense close to spheres.
In some cases this pinching can improve under the mean curvature flow.
To explain this in more detail, we make the following definition:
Let $\mathscr{F}$ be a nonempty class of smooth $m$-dimensional immersions $F:M\to\real{n}$,
where $M$ is not necessarily fixed, and set
$$\delta_{\mathscr{F}}:=\sup\{\delta\in\real{}:|A_F(p)|^2\ge\delta|\overrightarrow H_F(p)|^2\,,\forall p\in M,\,\forall (F:M\to\real{n})\in\mathscr{F}\}\,,$$
where $A_F$ and $\overrightarrow H_F$ denote the second fundamental form and mean curvature vector of the immersion $F:M\to\real{n}$.
Then $\delta_{\mathscr{F}}\ge\frac{1}{m}$ and $\delta_{\mathscr{F}}$ is finite,
if and only if $\mathscr{F}$ contains an immersion $F:M\to\real{n}$ for which $\overrightarrow H_F$ does not vanish
completely.
\begin{definition}
Let $\mathscr{F}$ be a preserved class of smooth $m$-dimensional immersions with $\delta_{\mathscr{F}}<\infty$ and suppose $\delta$ is some real number with $\delta>\delta_{\mathscr{F}}$. We say that $\mathscr{F}$ is
$\delta$-pinchable, if for any $\epsilon$ with $0\le\epsilon<\delta-\delta_{\mathscr{F}}$ the class
$$\mathscr{F}_\epsilon:=\{(F:M\to\real{n})\in\mathscr{F}:|A_F(p)|^2\le(\delta_{\mathscr{F}}+\epsilon)|\overrightarrow H_F(p)|^2\,,\forall p\in M\}$$
is a preserved class under the mean curvature flow.
\end{definition}
\begin{example}
\begin{enumerate}[a)]
\item
It follows from Theorem \ref{theo 4.4} that the class $\mathscr{F}(m,m+1)$ of smooth $m$-dimensional closed
immersions  into $\real{m+1}$ is $\delta$-pinchable for any $\delta\ge 1/m=\delta_{\mathscr{F}(m,m+1)}$ and that the pinching constant $\delta_{\mathscr{F}(m,m+1)}$ is attained if and only
if the immersion $F:M\to\real{n}$ is a round sphere or a flat plane (or part of).
\item
A beautiful result recently obtained by Andrews and Baker \cite{Andrews-Baker09} shows that the class $\mathscr{F}(m,m+k)$
of smooth $m$-dimensional closed immersions  into $\real{m+k}$
is $\delta$-{pin\-ch\-able} with $\delta=1/({m-1})$, if $m\ge 4$ and with $\delta=4/3m$ for $2\le m\le 4$. Here $\delta_{\mathscr{F}(m,m+k)}=1/m$.
They prove that $\delta$-pinched immersions contract to round points. Thus for such immersions one has $M=S^m$ and
they are smoothly homotopic to hyperspheres.
\end{enumerate}
\end{example}
We will now show that the class $\mathscr{L}(m)$ of  smooth closed Lagrangian immersions  into $\complex{m}$ is not $\delta$-pinchable for any $\delta$.
\begin{theorem}
Let $\mathscr{L}(m)$ be the class of smooth closed Lagrangian immersions  into $\complex{m}$,
$m>1$. Then $\delta_{\mathscr{L}(m)}=3/(m+2)$ and $\mathscr{L}(m)$
is not $\delta$-pinchable for any $\delta$.
\end{theorem}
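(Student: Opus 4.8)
\emph{Reduction to an algebraic inequality.} The plan is to translate the pinching ratio into a statement about the totally symmetric cubic form $h$ of the Lagrangian immersion. Since $J$ restricts to a bundle isometry $DF(TM)\to T^\perp M$ and $\udd A\alpha ij=\ddu hijk\ud\nu\alpha k$, $\overrightarrow H=H^k\nu_k$, I would first record the pointwise identities $|A|^2=|h|^2$ and $|\overrightarrow H|^2=|H|^2$, where $H_i=\uu gkl\ddd hikl$ is the trace of $h$; these follow from $\langle\nu_k,\nu_l\rangle=\dd gkl$. Thus the pinching ratio $|A|^2/|\overrightarrow H|^2$ equals $|h|^2/|H|^2$, and $\delta_{\mathscr L(m)}$ is the infimum of this ratio. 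Next I would prove the sharp pointwise bound $|h|^2\ge\frac{3}{m+2}|H|^2$ from the orthogonal splitting $\operatorname{Sym}^3=\operatorname{Sym}^3_0\oplus\mathcal P$ of symmetric cubic forms into the trace-free part $\operatorname{Sym}^3_0=\ker(\operatorname{tr})$ and the pure-trace part $\mathcal P$, the image of $\omega\mapsto P_{ijk}:=\omega_i\dd gjk+\omega_j\dd gik+\omega_k\dd gij$. A short computation gives $\operatorname{tr}(P)_i=(m+2)\omega_i$ and $|P|^2=3(m+2)|\omega|^2$; hence for fixed trace $H$ the norm of $h$ is minimized by its pure-trace representative $p_{ijk}=\frac{1}{m+2}(H_i\dd gjk+H_j\dd gik+H_k\dd gij)$, with $|p|^2=\frac{3}{m+2}|H|^2$. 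Writing $h=\mathring h+p$ with $\mathring h\in\operatorname{Sym}^3_0$ gives $|h|^2=|\mathring h|^2+|p|^2\ge\frac{3}{m+2}|H|^2$, with equality iff $h$ is pure-trace. Since closed submanifolds of $\complex m=\real{2m}$ are never minimal, $\overrightarrow H\not\equiv0$ always, so $\delta_{\mathscr L(m)}$ is finite and $\ge\frac{3}{m+2}$.

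\emph{The extremal immersion.} For the reverse bound I would invoke the Whitney sphere $W\subset\complex m$, the (essentially unique) Lagrangian immersion of $S^m$ whose cubic form is pure-trace at every point; this is the characterization of Lagrangians with conformal Maslov form due to Ros and Urbano. On $W$ equality holds everywhere and $\overrightarrow H\ne0$, so $\delta_{\mathscr L(m)}\le\frac{3}{m+2}$, and therefore $\delta_{\mathscr L(m)}=\frac{3}{m+2}$.

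\emph{Non-pinchability.} It suffices to show that the class $\mathscr F_0=\{F:|A|^2\le\delta_{\mathscr L(m)}|\overrightarrow H|^2 \text{ everywhere}\}$ (the case $\epsilon=0$) is not preserved: for then, given any $\delta>\delta_{\mathscr L(m)}$, the value $\epsilon=0$ lies in $[0,\delta-\delta_{\mathscr L(m)})$ and already shows $\mathscr L(m)$ is not $\delta$-pinchable. By the universal inequality above, $\mathscr F_0$ coincides with the equality class, i.e. with the pure-trace immersions, which by Ros--Urbano are exactly the Whitney spheres. Assume $\mathscr F_0$ were preserved. Then the mean curvature flow starting at $W$ would consist of Whitney spheres for all $t$. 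As all Whitney spheres are mutually similar (related by translations, homotheties and unitary maps of $\complex m$) and the flow is equivariant under isometries (Proposition \ref{theo isom}) and scales homothetically, the solution would be self-similar; being compact and shrinking in finite time, it would be a self-shrinker, i.e. a solution of (\ref{eq self}). But $W=S^m$ with $m\ge2$ has $H^1(S^m;\real{})=0$, so its mean curvature form is exact and the Maslov class $m_1=[H/\pi]$ vanishes; by \cite{Smoczyk99} there is no compact Lagrangian self-shrinker with trivial Maslov class, a contradiction. Hence $\mathscr F_0$ is not preserved and $\mathscr L(m)$ is not $\delta$-pinchable for any $\delta$.

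\emph{Main obstacle.} The delicate point is the implication ``the flow stays a Whitney sphere $\Rightarrow$ it is a self-shrinker''. Making this rigorous relies on the rigidity of the Whitney sphere (its uniqueness up to the similarity group) together with the standard fact that a compact flow confined to a single similarity orbit of a rigid model must evolve by homotheties and thus solve (\ref{eq self}). Equivalently, since a pure-trace immersion is pinched and hence develops a Type I singularity, one may instead apply the Type I blow-up of Proposition \ref{theo 1blowup}: the rescaling is a homothety and preserves the pure-trace condition, so the limit is again a Whitney-sphere self-shrinker, contradicting \cite{Smoczyk99} in the same manner. Everything else is routine linear algebra together with the cited classifications.
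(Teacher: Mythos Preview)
Your proposal is correct and follows essentially the same route as the paper: the sharp lower bound $|A|^2\ge\frac{3}{m+2}|\overrightarrow H|^2$ via the trace-free/pure-trace splitting of the cubic form, equality on the Whitney sphere, the Ros--Urbano rigidity, and the contradiction with the nonexistence of Lagrangian sphere self-shrinkers from \cite{Smoczyk99}. The only cosmetic difference is that you invoke the more general statement (no compact Lagrangian self-shrinker with trivial Maslov class) and deduce the sphere case from $H^1(S^m;\real{})=0$, whereas the paper quotes the sphere corollary directly; and you flag the step ``stays a Whitney sphere $\Rightarrow$ self-shrinker'' as the delicate point, which the paper passes over in one sentence.
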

\begin{proof}
Given a Lagrangian immersion $F:M\to\complex{m}$ we have
$$0\le\left|\ddd hijk-\frac{1}{m+2}(H_i\dd gjk+H_j\dd gki+H_k\dd gij)\right|^2=|A|^2-\frac{3}{m+2}|\overrightarrow H|^2\,,$$
where $H_idx^i$ is the mean curvature form.
This implies $\delta_{\mathscr{L}(m)}\ge\frac{3}{m+2}$. On the other hand equality is attained
for flat Lagrangian planes and for the Whitney spheres. These are given by restricting the
immersions
$$\tilde F_r:\real{m+1}\to\complex{m}\,,\quad \tilde F_r(x^1,\dots,x^{m+1}):=\frac{r(1+ix^{m+1})}{1+(x^{m+1})^2}(x^1,\dots,x^{m}),\,\quad r>0$$
to $S^m\subset\real{m+1}$, i.e. $F_r:=\tilde F_{r|S^m}:S^m\to\complex{m}$ is a Lagrangian immersion
of the sphere with $|A|^2=\frac{3}{m+2}|\overrightarrow H|^2$. The number $r$ is called the radius of the Whitney sphere. This shows $\delta_{\mathscr{L}(M)}=\frac{3}{m+2}$. It has been shown by Ros and Urbano in \cite{Ros-Urbano98} that Whitney spheres and flat Lagrangian planes
are the only Lagrangian submanifolds in $\complex{m}$, $m>1$, for which $|A|^2=\frac{3}{m+2}|\overrightarrow H|^2$. Now if $\mathscr{L}(M)$ would be $\delta$-pinchable for some $\delta$, then in particular the Lagrangian
mean curvature flow would preserve the identity $|A|^2=\frac{3}{m+2}|\overrightarrow H|^2$. This is certainly
true for the flat planes but for the Whitney sphere this cannot be true. Because the
result of Ros and Urbano implies that under
the assumption of $\delta$-pinchability a Whitney sphere would then stay a Whitney sphere under the Lagrangian mean curvature flow and the radius of the spheres would decrease.
In other words, the Whitney sphere would have to be a self-similar shrinking solution
of the Lagrangian mean curvature flow. This is a contradiction to the well-known result (first shown in \cite[Corollary 2.3.6]{Smoczyk99}), that there are no self-shrinking Lagrangian spheres in $\complex{m}$, if $m>1$.
\end{proof}

\subsection{Lagrangian mean curvature flow}~

In this subsection we will assume that $F:M\to N$ is a closed smooth Lagrangian immersion into a
K\"ahler manifold $(N,g,J)$. It has been shown in \cite{Smoczyk96} that the Lagrangian
condition is preserved, if the ambient K\"ahler manifold is Einstein. This includes the important
case of Calabi-Yau manifolds, i.e. of  Ricci flat K\"ahler manifolds. Recently a generalized
Lagrangian mean curvature flow in almost K\"ahler manifolds with Einstein connections
has been defined by Wang and the author in \cite{Smoczyk-Wang09}. This generalizes an earlier result by Behrndt \cite{Behrndt08}.
The Einstein condition is
relevant in view of the Codazzi equation which implies that the mean curvature form
is closed, a necessary condition to guarantee that the deformation is Lagrangian. To explain
this in more detail, observe that the symplectic form $\omega$ induces an isomorphism between
the space of smooth normal vector fields along $M$, and the
space of smooth $1$-forms on $M$. Namely, given $\theta\in\Omega^1(M)$ there exists
a unique normal vector field $V\in\Gamma(T^\perp M)$ with $\theta=\omega(\cdot, V)$.
If $F:M\times[0,T)\to N$ is a smooth family of Lagrangian immersions evolving in
normal direction driven by some smooth time depending $1$-forms $\theta\in\Omega^1(M)$ we have
$$0=\dt F^*\omega=d(\omega(\dt F,\cdot))=-d\theta$$
and consequently $\theta$ must be closed. Since the mean curvature form is given by $$H=\omega(\cdot,\overrightarrow H)$$
we obtain that the closeness of $H$ is necessary to guarantee that the mean curvature flow preserves the Lagrangian condition, and it is indeed sufficient (\cites{Smoczyk96, Smoczyk99}).
In the non-compact case this is open in general, but in some cases (like graphs over complete
Lagrangian submanifolds with bounded geometry) this can be reduced to the existence
problem of solutions to a parabolic equation of Monge-Amp\`ere type. The Lagrangian condition can be interpreted as an integrability condition. For example, if $M$ is a graph in $\complex{m}
=\real{m}\oplus i\real{m}$ over the real part, i.e. if $M$ is the image of some embedding
$$F:\real{m}\to\complex{m}\,,\quad F(x)=x+iy(x)\,,$$
where $y=y_idx^i$ is a smooth $1$-form on $\real{m}$, then $M$ is Lagrangian
if and only if $y$ is closed. Consequently there exists a smooth function $u$ (called a generating function) such that $y=du$. Assuming that $M$ evolves under the mean curvature flow and 
that all subsequent graphs $M_t$ are still Lagrangian one can integrate the evolution
equation of $y=du$ and obtains a parabolic evolution equation of Monge-Amp\`ere type
for $u$. Conversely, given a solution $u$ of this parabolic Monge-Amp\`ere type equation 
on $\real{m}$ one can generate
Lagrangian graphs $F=(x,du)$ and it can be shown that these graphs move under the mean curvature
flow (cf. \cite{Smoczyk99}). The same principle works in a much more general context,
namely if the initial Lagrangian submanifold lies in some K\"ahler-Einstein manifold and the
Lagrangian has bounded geometry. The boundedness of the geometry is essential for the proof
since this allows to exploit the implicit function theorem to obtain the existence of a Monge-Amp\`ere
type equation similar as above.

This integrability property has one important consequence. In general, given a second order
parabolic equation, one would need uniform $C^{2,\alpha}$-bounds of the solution in space
and uniform $C^{1,\alpha}$-estimates in time to ensure long-time existence, as follows
from Schauder theory.  For the mean curvature flow these estimates are already
induced by a uniform estimate of the second fundamental form 
$A$ (see Corollary \ref{cor long time}), so essentially by $C^2$-estimates.
In the Lagrangian mean curvature  flow $F:M\times[0,T)\to N$  one may instead use the parabolic equation of Monge-Amp\`ere type for the generating function $u$ and consequently one just needs $C^{1,\alpha}$-estimates
in space and $C^{0,\alpha}$ estimates in time for $F$ which itself is of first order in $u$.
In some situations this principle has been used successfully, for example in 
\cites{Smoczyk-Wang02, Smoczyk04}. There it was shown that Lagrangian 
tori $M=T^m$  in flat tori $N=T^{2m}$ converge to flat Lagrangian
tori, if the universal cover possesses a convex generating function $u$. We also mention a recent generalization to the
complete case by Chau, Chen and He \cite{Chau-Chen-He09b}.

The evolution equations for the Lagrangian mean curvature flow have been derived in
\cite{Smoczyk99} (see also \cite{Smoczyk96}) and can also be obtained directly from our general evolution equations stated in subsection \ref{sec evolution}. Besides the evolution equation for the induced metric the equation for the mean curvature form $H=H_idx^i$
is perhaps the most important and is given by
\begin{equation}\label{eq evolmcf}
\nabla_{\dt}H=dd^\dagger H+\frac{S}{2m}H\,,
\end{equation}
where $S$ denotes the scalar curvature of the ambient K\"ahler-Einstein manifold, $m$
is the dimension of the Lagrangian immersion and $d^\dagger H=\nabla^iH_i$.
In particular it follows that the cohomology class $[He^{-\frac{S}{2m}t}]$ is invariant under
the Lagrangian mean curvature flow and in a Calabi-Yau manifold the Lagrangian immersions
with trivial first Maslov class $m_1$ (we have $m_1=\frac{1}{\pi}[H]$) form a preserved class.
This also shows that if the scalar curvature $S$ is nonnegative, then a necessary condition to
have long-time existence and smooth convergence of the Lagrangian mean curvature flow to
a minimal Lagrangian immersion is that the initial mean curvature form is exact. Exactness
of the mean curvature form will then be preserved and a globally defined Lagrangian angle $\alpha$ with $d\alpha=H$
exists for all $t$. This last result also holds for general scalar curvature $S$ and after
choosing a proper gauge for $\alpha$ one can prove \cite[Lemma 2.4]{Smoczyk99a}
that $\alpha$ satisfies the evolution
equation
\begin{equation}\label{evol angle}
\dt\alpha=\Delta\alpha+\frac{S}{2m}\alpha\,.
\end{equation}
It is then a simple consequence of the maximum principle that on compact Lagrangian submanifolds
$M$ with trivial Maslov class in a Calabi-Yau manifold  there exist uniform upper and lower bounds for the
Lagrangian angle given by its initial maximum resp. minimum. In particular, the condition
to be almost calibrated, i.e. $*\operatorname{Re}(dz_{|M})=\cos\alpha>0$ is preserved. Here $dz$ denotes
the complex volume form on the Calabi-Yau manifold and it is well known that the Lagrangian angle
$\alpha$ satisfies 
$$dz_{|M}=e^{i\alpha} d\mu\,,$$
where $d\mu$ is the volume form on $M$.
Almost calibrated Lagrangian submanifolds in Calabi-Yau manifolds have some nice properties
under the mean curvature flow. As was mentioned earlier, from the results in 
\cites{Chen-Li02, Neves07, Smoczyk99, Wang01} we know that singularities of the Lagrangian mean curvature
flow of compact Lagrangian immersions with trivial Maslov class in Calabi-Yau manifolds cannot
be of Type I and therefore a big class of singularities is excluded. So far one cannot say much about
singularities of Type II and in particular, one does not know if they occur at all in the case of compact almost calibrated
Lagrangians (though some authors have some rather heuristic arguments for the existence of such
singularities). It is worth noting that there do not exist any compact almost calibrated Lagrangian
immersions in $\real{2m}$ (but in $\mathbb{T}^{2m}$ they exist).
In \cite[Theorem 1.3]{Smoczyk02} it was shown that there exists a 
uniform (in time) lower bound for the volume
of a compact almost calibrated Lagrangian evolving by its mean curvature  in a Calabi-Yau (and more generally in a K\"ahler-Einstein
manifold of non-positive scalar curvature). 

An interesting class of Lagrangian immersions is given by monotone Lagrangians. A Lagrangian
immersion $F:M\to\real{2m}$ is called monotone, if
\begin{equation}\label{eq monotone}
[H]=\epsilon [F^*\lambda]\,,
\end{equation}
for some positive constant $\epsilon$ (called monotonicity constant). Here $\lambda$ is the
Liouville form on $\real{2m}=T\real{m}$.
In \cite{GSSZ07} we proved several theorems concerning monotone Lagrangian immersions.
From the evolution equations of $H$ and $F^*\lambda$ one derives that monotonicity is preserved
with a time dependent monotonicity constant $\epsilon(t)$.
Gromov \cite{Gromov85} proved that given an embedded Lagrangian submanifold $M$ in $\real{2m}$
there exists a holomorphic disc with boundary on $M$. On the other hand, from the evolution equations of $H$ and $F^*\lambda$ we get that the area of holomorphic discs with boundary
representing some fixed homology class in $M$ is shrinking linearly in time. If the Lagrangian is monotone,
then the shrinking rate for the area of holomorphic discs is the same for all homology classes.

Unfortunately it is unknown, if embeddedness of Lagrangian submanifolds is preserved under
mean curvature flow (in general, embeddedness in higher codimension is not preserved but self-intersection
numbers might be).  Suppose $F:M\times[0,T)\to\real{2m}$
is a Lagrangian mean curvature flow of a compact monotone Lagrangian with initial monotonicity
constant $\epsilon>0$ and suppose $0<T_{e}\le T$
is the embedding time, i.e. the maximal time such that 
$F_t:M\to\real{2m}$ is an embedding for all $0\le t<T_e$. Then we proved \cite[Theorem 1.6 and Theorem 1.11]{GSSZ07} that 
$T_e\le\frac{1}{\epsilon}\,.$
Moreover
$$T=\frac{1}{\epsilon}\,,$$
in case $T_e=T$ and if $M$ develops a Type I singularity as $t\to T$. We note that this result is rather
unique in mean curvature flow. Usually it is not possible to {explicitly} determine the span of life of
a solution and to determine it in terms of its initial data.
In the same paper we also proved the existence of compact embedded monotone Lagrangian
submanifolds (even with some additional symmetry) that develop Type II singularities and
consequently it is not true that monotone embedded Lagrangian submanifolds must
develop Type I singularities, as was conjectured earlier by some people.

Lagrangian submanifolds appear naturally in another context. If 
$$f:M\to K$$ 
is a symplectomorphism between two symplectic manifolds $(M,\omega^M) $, $(K,\omega^K)$
then the graph 
$$F:M\to M\times K\,,\quad F(p)=(p,f(p))$$
is a Lagrangian embedding in $(M\times K,(\omega^M, -\omega^K))$.

If $(M,\omega^M,J^M, g^M)$ and $(K,\omega^K,J^K,g^K)$ are both K\"ahler-Einstein, then
the product manifold is K\"ahler-Einstein as well and one can use the Lagrangian mean curvature
flow to deform a symplectomorphism. In \cite{Smoczyk02} symplectomorphisms between Riemann
surfaces of the same constant curvature $S$ have been studied and it was shown 
(Lemma 10 and Lemma 14) that Lagrangian
graphs that come from symplectomorphisms stay graphs for all time. The same result was obtained
independently by Wang in \cite{Wang01b} (the quantities $r$ in \cite[Lemma 10]{Smoczyk02}
and $\eta$ in \cite[Proposition 2.1]{Wang01b} are the same up to some positive constant). In \cite{Smoczyk02} the graphical condition was then
used in the case of non-positive curvature $S$ and under the angle condition $\cos\alpha>0$ (almost
calibrated) to derive explicit bounds for the second fundamental form and to establish
long-time existence and smooth convergence to a minimal Lagrangian surface. Wang used
the graphical condition in \cite{Wang01b} to obtain long-time existence without a
sign condition on $S$ by methods related to White's regularity theorem and then
proved convergence of subsequences to minimal Lagrangian surfaces. Later he refined his result
and proved smooth convergence in \cite{Wang08}.
In a recent paper by Medos and Wang \cite{Medos-Wang09} it is shown that symplectomorphisms
of $\mathbb{CP}^m$ for which the singular values satisfy some pinching condition
can be smoothly deformed into a biholomorphic isometry of $\mathbb{CP}^m$.

In a joint paper \cite{Smoczyk-Wang02} (see also \cite{Smoczyk04}) Wang and the author
studied Lagrangian graphs in the cotangent bundle of a flat torus and proved that Lagrangian
tori with a convex generating function converge smoothly to a flat Lagrangian torus. 
In this case the convexity of the generating function $u$
implies that the Monge-Amp\`ere type operator that appears in the evolution
equation of $u$ becomes concave and then results of Krylov \cite{Krylov87} imply
uniform 
$C^{2,\alpha}$-estimates in space and $C^{1,\alpha}$-estimates in time and long-time
existence and convergence follows. A similar result holds for  non-compact graphs \cite{Chau-Chen-He09b}.

\subsection{Mean curvature flow of graphs}~

As the results mentioned at the end of the last subsection show, mean curvature flow of graphs
behaves much ``nicer" than in the general case. There are many results for graphs moving
under mean curvature flow. The first result in this direction was the paper
by Ecker and Huisken \cite{Ecker-Huisken89} where long-time existence of entire graphs 
in $\real{m+1}$ (hypersurfaces) was shown. Convergence to flat subspaces follows, if the
growth rate at infinity is linear. Under a different growth rate they prove that the hypersurfaces converge asymptotically to entire self-expanding solutions of the mean curvature flow. The crucial observation in their paper
was that the angle function $v:=\langle\nu,Z\rangle$ (scalar product of the unit normal and
the height vector $Z$) satisfies a very useful evolution equation that can be exploited to bound
the second fundamental form appropriately.

Many results in mean curvature flow of graphs have been obtained by Wang. For
example in \cite{Wang02} he studied the graph induced by a map $f:M\to K$ between to Riemannian manifolds of constant sectional curvatures. Under suitable assumptions on the differential of $f$ and the curvatures of $M$ resp. $K$ 
he obtained long-time existence and convergence to constant maps.
In \cite{Tsui-Wang04} the authors consider a graph in the product
$M\times K$ of two Riemannian manifolds of constant sectional curvatures. 
A map $f:M\to K$  for which the singular values $\lambda_i$ of $f$ satisfy the condition
$\lambda_i\lambda_j<1$ for all $i\neq j$ is called an area decreasing map. 
The main theorem in their paper states long-time existence of the mean curvature flow and convergence to a constant map
under the following assumptions:
\begin{enumerate}[i)]
\item the initial graph of $f$ is area-decreasing;
\item $ \sigma^M\ge|\sigma^K|,\,\sigma^M+\sigma^K>0$ and $\dim M\ge 2$, 
\end{enumerate}
where $\sigma^M,\sigma^K$ denote the sectional curvatures of $M$ resp. $K$.
In particular area decreasing maps from $S^m$ to $S^k$ are homotopically trivial for $m\ge 2$.

In \cite{Li-Li03} graphs in Riemannian products of two space forms have been studied and under certain assumptions on the initial graph 
long-time existence was established.
In \cite{Wang05} two long-time existence and convergence results for the mean curvature flow of graphs induced by maps $f:M\to K$ between two compact Riemannian manifolds of dimension $m=\dim M\geq2$ and $\dim K=2$ are given.
In the first theorem $M$ and $K$ are assumed to be flat, and in the second
theorem, $M=S^m$ is an $m$-sphere of constant curvature $k_1 >0$ and $K$
a compact surface of constant curvature $k_2$ with $|k_2|\leq k_1$. 
The key assumption on the graph is expressed in terms of the Gau\ss\ map, i.e. the map which assigns to a point $p$ its tangent space. The latter is an element of the bundle of $m$-dimensional subspaces of $TN$, $N=M\times K$ and it is shown that there exists a sub-bundle $\germ{G}$ of $TN$ which is preserved along the mean curvature flow. The same author proved a beautiful general theorem for
the Gau\ss\ map under the mean curvature flow (see \cite{Wang03}).

\begin{bibdiv}
\begin{biblist}

\bib{Abresch-Langer86}{article}{
   author={Abresch, U.},
   author={Langer, J.},
   title={The normalized curve shortening flow and homothetic solutions},
   journal={J. Differential Geom.},
   volume={23},
   date={1986},
   number={2},
   pages={175--196},
}

\bib{Altschuler91}{article}{
   author={Altschuler, St. J.},
   title={Singularities of the curve shrinking flow for space curves},
   journal={J. Differential Geom.},
   volume={34},
   date={1991},
   number={2},
   pages={491--514},
}

\bib{Altschuler-Grayson92}{article}{
   author={Altschuler, St. J.},
   author={Grayson, M. A.},
   title={Shortening space curves and flow through singularities},
   journal={J. Differential Geom.},
   volume={35},
   date={1992},
   number={2},
   pages={283--298},
}

\bib{Ambrosio-Soner97}{article}{
   author={Ambrosio, L.},
   author={Soner, H. M.},
   title={A measure-theoretic approach to higher codimension mean curvature
   flows},
   note={Dedicated to Ennio De Giorgi},
   journal={Ann. Scuola Norm. Sup. Pisa Cl. Sci. (4)},
   volume={25},
   date={1997},
   number={1-2},
   pages={27--49 (1998)},
}

\bib{Andrews-Baker09}{article}{
   author={Andrews, B.},
   author={Baker, C.},
   title={Mean curvature flow of pinched submanifolds to spheres},
   journal={J. Differential Geom.},
   volume={85},
   date={2010},
   number={3},
   pages={357--395},
}

\bib{Anciaux}{article}{
   author={Anciaux, H.},
   title={Construction of Lagrangian self-similar solutions to the mean
   curvature flow in $\Bbb C^n$},
   journal={Geom. Dedicata},
   volume={120},
   date={2006},
   pages={37--48},
}

\bib{Angenent91}{article}{
   author={Angenent, S.},
   title={On the formation of singularities in the curve shortening flow},
   journal={J. Differential Geom.},
   volume={33},
   date={1991},
   number={3},
   pages={601--633},
}

\bib{Angenent-Velazquez97}{article}{
   author={Angenent, S. B.},
   author={Vel{\'a}zquez, J. J. L.},
   title={Degenerate neckpinches in mean curvature flow},
   journal={J. Reine Angew. Math.},
   volume={482},
   date={1997},
   pages={15--66},
}

\bib{Behrndt08}{article}{
   author={Behrndt, T.},
   title={Generalized Lagrangian mean curvature flow in K\"ahler manifolds that are almost Einstein},
   journal={arXiv:0812.4256, to appear in Proceedings of CDG 2009, Leibniz Universit\"at Hannover},
   date={2008},
}

\bib{Brakke}{book}{
   author={Brakke, K. A.},
   title={The motion of a surface by its mean curvature},
   series={Mathematical Notes},
   volume={20},
   publisher={Princeton University Press},
   place={Princeton, N.J.},
   date={1978},
   pages={i+252},
   isbn={0-691-08204-9},
}

\bib{Castro-Lerma09}{article}{
   author={Castro, I.},
   author={Lerma, A. M.},
   title={Hamiltonian stationary self-similar solutions for Lagrangian mean
   curvature flow in the complex Euclidean plane},
   journal={Proc. Amer. Math. Soc.},
   volume={138},
   date={2010},
   number={5},
   pages={1821--1832},
}

\bib{Chau-Chen-He09}{article}{
   author={Chau, A.},
   author={Chen, J.},
   author={He, W.},
   title={Entire self-similar solutions to Lagrangian Mean curvature flow},
   journal={arXiv:0905.3869},
   date={2009},
}

\bib{Chau-Chen-He09b}{article}{
   author={Chau, A.},
   author={Chen, J.},
   author={He, W.},
   title={Lagrangian Mean Curvature flow for entire Lipschitz graphs },
   journal={arXiv:0902.3300},
   date={2009},
}

\bib{Chen-Giga-Goto91}{article}{
   author={Chen, Y.},
   author={Giga, Y.},
   author={Goto, S.},
   title={Uniqueness and existence of viscosity solutions of generalized
   mean curvature flow equations},
   journal={J. Differential Geom.},
   volume={33},
   date={1991},
   number={3},
   pages={749--786},
}

\bib{Chen-Jian-Liu05}{article}{
   author={Chen, X.},
   author={Jian, H.},
   author={Liu, Q.},
   title={Convexity and symmetry of translating solitons in mean curvature
   flows},
   journal={Chinese Ann. Math. Ser. B},
   volume={26},
   date={2005},
   number={3},
   pages={413--422},
}

\bib{Chen-Li01}{article}{
   author={Chen, J.},
   author={Li, J.},
   title={Mean curvature flow of surface in $4$-manifolds},
   journal={Adv. Math.},
   volume={163},
   date={2001},
   number={2},
   pages={287--309},
}

\bib{Chen-Li02}{article}{
   author={Chen, J.},
   author={Li, J.},
   title={Singularity of mean curvature flow of Lagrangian submanifolds},
   journal={Invent. Math.},
   volume={156},
   date={2004},
   number={1},
   pages={25--51},
}

\bib{Chen-Li-Tian02}{article}{
   author={Chen, J.},
   author={Li, J.},
   author={Tian, G.},
   title={Two-dimensional graphs moving by mean curvature flow},
   journal={Acta Math. Sin. (Engl. Ser.)},
   volume={18},
   date={2002},
   number={2},
   pages={209--224},
}

\bib{Chen-Ma07}{article}{
   author={Chen, D.},
   author={Ma, L.},
   title={Curve shortening in a Riemannian manifold},
   journal={Ann. Mat. Pura Appl. (4)},
   volume={186},
   date={2007},
   number={4},
   pages={663--684},
}

\bib{Chen-Pang09}{article}{
   author={Chen, J.},
   author={Pang, C.},
   title={Uniqueness of unbounded solutions of the Lagrangian mean curvature
   flow equation for graphs},
   language={English, with English and French summaries},
   journal={C. R. Math. Acad. Sci. Paris},
   volume={347},
   date={2009},
   number={17-18},
   pages={1031--1034},
}

\bib{Chen-Yin07}{article}{
   author={Chen, B.-L.},
   author={Yin, L.},
   title={Uniqueness and pseudolocality theorems of the mean curvature flow},
   journal={Comm. Anal. Geom.},
   volume={15},
   date={2007},
   number={3},
   pages={435--490},
}

\bib{Chou-Zhu01}{book}{
   author={Chou, K.-S.},
   author={Zhu, X.-P.},
   title={The curve shortening problem},
   publisher={Chapman \& Hall/CRC, Boca Raton, FL},
   date={2001},
   pages={x+255},
   isbn={1-58488-213-1},
}

\bib{Clutterbuck-Schnuerer-Schulze07}{article}{
   author={Clutterbuck, J.},
   author={Schn{\"u}rer, O.C.},
   author={Schulze, F.},
   title={Stability of translating solutions to mean curvature flow},
   journal={Calc. Var. Partial Differential Equations},
   volume={29},
   date={2007},
   number={3},
   pages={281--293},
}

\bib{Colding-Minicozzi}{article}{
   author={Colding, T. H.},
   author={Minicozzi, W. P., II},
   title={Sharp estimates for mean curvature flow of graphs},
   journal={J. Reine Angew. Math.},
   volume={574},
   date={2004},
   pages={187--195},
}

\bib{Colding-Minicozzi09}{article}{
   author={Colding, T. H.},
   author={Minicozzi, W. P., II},
   title={Generic mean curvature flow I; generic singularities},
   journal={arXiv:0908.3788},
   date={2009},
}

\bib{Ecker82}{article}{
   author={Ecker, K.},
   title={Estimates for evolutionary surfaces of prescribed mean curvature},
   journal={Math. Z.},
   volume={180},
   date={1982},
   number={2},
   pages={179--192},
}

\bib{Ecker01}{article}{
   author={Ecker, K.},
   title={A local monotonicity formula for mean curvature flow},
   journal={Ann. of Math. (2)},
   volume={154},
   date={2001},
   number={2},
   pages={503--525},
}

\bib{Ecker04}{book}{
   author={Ecker, K.},
   title={Regularity theory for mean curvature flow},
   series={Progress in Nonlinear Differential Equations and their
   Applications, 57},
   publisher={Birkh\"auser Boston Inc.},
   place={Boston, MA},
   date={2004},
   pages={xiv+165},
   isbn={0-8176-3243-3},
}

\bib{Ecker-Huisken89}{article}{
   author={Ecker, K.},
   author={Huisken, G.},
   title={Mean curvature evolution of entire graphs},
   journal={Ann. of Math. (2)},
   volume={130},
   date={1989},
   number={3},
   pages={453--471},
}

\bib{E-H91}{article}{
   author={Ecker, K.},
   author={Huisken, G.},
   title={Interior estimates for hypersurfaces moving by mean curvature},
   journal={Invent. Math.},
   volume={105},
   date={1991},
   number={3},
   pages={547--569},
}

\bib{Ecker-Knopf-Ni-Topping08}{article}{
   author={Ecker, K.},
   author={Knopf, D.},
   author={Ni, L.},
   author={Topping, P.},
   title={Local monotonicity and mean value formulas for evolving Riemannian
   manifolds},
   journal={J. Reine Angew. Math.},
   volume={616},
   date={2008},
   pages={89--130},
}

\bib{Enders-Mueller-Topping}{article}{
   author={Enders, J.},
   author={M\"uller, R.},
   author={Topping, P.},
   title={On Type I Singularities in Ricci flow},
   journal={arXiv:1005.1624},
   date={2010},
}

\bib{Evans-Spruck91}{article}{
   author={Evans, L. C.},
   author={Spruck, J.},
   title={Motion of level sets by mean curvature. I},
   journal={J. Differential Geom.},
   volume={33},
   date={1991},
   number={3},
   pages={635--681},
}

\bib{Gage84}{article}{
   author={Gage, M. E.},
   title={Curve shortening makes convex curves circular},
   journal={Invent. Math.},
   volume={76},
   date={1984},
   number={2},
   pages={357--364},
}

\bib{Gage-Hamilton86}{article}{
   author={Gage, M.},
   author={Hamilton, R. S.},
   title={The heat equation shrinking convex plane curves},
   journal={J. Differential Geom.},
   volume={23},
   date={1986},
   number={1},
   pages={69--96},
}

\bib{Gerhardt}{article}{
   author={Gerhardt, C.},
   title={Evolutionary surfaces of prescribed mean curvature},
   journal={J. Differential Equations},
   volume={36},
   date={1980},
   number={1},
   pages={139--172},
}

\bib{Grayson87}{article}{
   author={Grayson, M. A.},
   title={The heat equation shrinks embedded plane curves to round points},
   journal={J. Differential Geom.},
   volume={26},
   date={1987},
   number={2},
   pages={285--314},
}

\bib{Grayson89}{article}{
   author={Grayson, M. A.},
   title={Shortening embedded curves},
   journal={Ann. of Math. (2)},
   volume={129},
   date={1989},
   number={1},
   pages={71--111},
}

\bib{GSSZ07}{article}{
   author={Groh, K.},
   author={Schwarz, M.},
   author={Smoczyk, K.},
   author={Zehmisch, K.},
   title={Mean curvature flow of monotone Lagrangian submanifolds},
   journal={Math. Z.},
   volume={257},
   date={2007},
   number={2},
   pages={295--327},
}

\bib{Gromov85}{article}{
   author={Gromov, M.},
   title={Pseudoholomorphic curves in symplectic manifolds},
   journal={Invent. Math.},
   volume={82},
   date={1985},
   number={2},
   pages={307--347},
}

\bib{Ham82-1}{article}{
   author={Hamilton, R. S.},
   title={The inverse function theorem of Nash and Moser},
   journal={Bull. Amer. Math. Soc. (N.S.)},
   volume={7},
   date={1982},
   number={1},
   pages={65--222},
}

\bib{Ham82-2}{article}{
   author={Hamilton, R. S.},
   title={Three-manifolds with positive Ricci curvature},
   journal={J. Differential Geom.},
   volume={17},
   date={1982},
   number={2},
   pages={255--306},
}

\bib{Ham86}{article}{
   author={Hamilton, R. S.},
   title={Four-manifolds with positive curvature operator},
   journal={J. Differential Geom.},
   volume={24},
   date={1986},
   number={2},
   pages={153--179},
}

\bib{Hamilton93a}{article}{
   author={Hamilton, R. S.},
   title={Monotonicity formulas for parabolic flows on manifolds},
   journal={Comm. Anal. Geom.},
   volume={1},
   date={1993},
   number={1},
   pages={127--137},
}

\bib{Hamilton93}{article}{
   author={Hamilton, R. S.},
   title={The formation of singularities in the Ricci flow},
   conference={
      title={Surveys in differential geometry, Vol.\ II},
      address={Cambridge, MA},
      date={1993},
   },
   book={
      publisher={Int. Press, Cambridge, MA},
   },
   date={1995},
   pages={7--136},
}

\bib{Hamilton95b}{article}{
   author={Hamilton, R. S.},
   title={Harnack estimate for the mean curvature flow},
   journal={J. Differential Geom.},
   volume={41},
   date={1995},
   number={1},
   pages={215--226},
}

\bib{Han-Li09}{article}{
   author={Han, X.},
   author={Li, J.},
   title={Translating solitons to symplectic and Lagrangian mean curvature
   flows},
   journal={Internat. J. Math.},
   volume={20},
   date={2009},
   number={4},
   pages={443--458},
}

\bib{Huisken84}{article}{
   author={Huisken, G.},
   title={Flow by mean curvature of convex surfaces into spheres},
   journal={J. Differential Geom.},
   volume={20},
   date={1984},
   number={1},
   pages={237--266},
}

\bib{Huisken86}{article}{
   author={Huisken, G.},
   title={Contracting convex hypersurfaces in Riemannian manifolds by their
   mean curvature},
   journal={Invent. Math.},
   volume={84},
   date={1986},
   number={3},
   pages={463--480},
}

\bib{Huisken90}{article}{
   author={Huisken, G.},
   title={Asymptotic behavior for singularities of the mean curvature flow},
   journal={J. Differential Geom.},
   volume={31},
   date={1990},
   number={1},
   pages={285--299},
}

\bib{Huisken93}{article}{
   author={Huisken, G.},
   title={Local and global behaviour of hypersurfaces moving by mean
   curvature},
   conference={
      title={Differential geometry: partial differential equations on
      manifolds (Los Angeles, CA, 1990)},
   },
   book={
      series={Proc. Sympos. Pure Math.},
      volume={54},
      publisher={Amer. Math. Soc.},
      place={Providence, RI},
   },
   date={1993},
   pages={175--191},
}

\bib{Huisken-Sinestrari99}{article}{
   author={Huisken, G.},
   author={Sinestrari, C.},
   title={Mean curvature flow singularities for mean convex surfaces},
   journal={Calc. Var. Partial Differential Equations},
   volume={8},
   date={1999},
   number={1},
   pages={1--14},
}

\bib{Huisken-Sinestrari99b}{article}{
   author={Huisken, G.},
   author={Sinestrari, C.},
   title={Convexity estimates for mean curvature flow and singularities of
   mean convex surfaces},
   journal={Acta Math.},
   volume={183},
   date={1999},
   number={1},
   pages={45--70},
}

\bib{Huisken-Sinestrari09}{article}{
   author={Huisken, G.},
   author={Sinestrari, C.},
   title={Mean curvature flow with surgeries of two-convex hypersurfaces},
   journal={Invent. Math.},
   volume={175},
   date={2009},
   number={1},
   pages={137--221},
}

\bib{Ilmanen92}{article}{
   author={Ilmanen, T.},
   title={Generalized flow of sets by mean curvature on a manifold},
   journal={Indiana Univ. Math. J.},
   volume={41},
   date={1992},
   number={3},
   pages={671--705},
}

\bib{Joyce-Lee-Tsui08}{article}{
   author={Joyce, D.},
   author={Lee, Y.-I.},
   author={Tsui, M.-P.},
   title={Self-similar solutions and translating solitons for Lagrangian
   mean curvature flow},
   journal={J. Differential Geom.},
   volume={84},
   date={2010},
   number={1},
   pages={127--161},
}

\bib{Krylov87}{book}{
   author={Krylov, N. V.},
   title={Nonlinear elliptic and parabolic equations of the second order},
   series={Mathematics and its Applications (Soviet Series)},
   volume={7},
   publisher={D. Reidel Publishing Co.},
   place={Dordrecht},
   date={1987},
   pages={xiv+462},
   isbn={90-277-2289-7},
}

\bib{Le-Sesum10}{article}{
   author={Le, N. Q.},
   author={Sesum, N.},
   title={The mean curvature at the first singular time of the mean
   curvature flow},
   journal={Ann. Inst. H. Poincar\'e Anal. Non Lin\'eaire},
   volume={27},
   date={2010},
   number={6},
   pages={1441--1459},
}

\bib{Le-Sesum10b}{article}{
   author={Le, N. Q.},
   author={Sesum, N.},
   title={Blow-up rate of the mean curvature during the mean curvature flow and a gap theorem for self-shrinkers},
   journal={arXiv:1011.5245v1},
   date={2010},
}

\bib{Li-Li03}{article}{
   author={Li, J.},
   author={Li, Y.},
   title={Mean curvature flow of graphs in $\Sigma_1\times\Sigma_2$},
   journal={J. Partial Differential Equations},
   volume={16},
   date={2003},
   number={3},
   pages={255--265},
}

\bib{Liu-Xu-Ye-Zhao}{article}{
   author={Liu, K.},
   author={Xu, H.},
   author={Ye, F.},
   author={Zhao, E.},
   title={The extension and convergence of mean curvature flow in higher codimension},
   journal={arXiv:1104.0971v1},
   date={2011},
}

\bib{Medos-Wang09}{article}{
   author={Medos, I.},
   author={Wang, M.-T.},
   title={Deforming symplectomorphisms of complex projective spaces by the mean curvature flow},
   journal={Preprint},
   date={2009},
}

\bib{Mullins56}{article}{
   author={Mullins, W. W.},
   title={Two-dimensional motion of idealized grain boundaries},
   journal={J. Appl. Phys.},
   volume={27},
   date={1956},
   pages={900--904},
}

\bib{Neves07}{article}{
   author={Neves, A.},
   title={Singularities of Lagrangian mean curvature flow: zero-Maslov class
   case},
   journal={Invent. Math.},
   volume={168},
   date={2007},
   number={3},
   pages={449--484},
}

\bib{Neves10}{article}{
   author={Neves, A.},
   title={Singularities of Lagrangian mean curvature flow: monotone case},
   journal={Math. Res. Lett.},
   volume={17},
   date={2010},
   number={1},
   pages={109--126},
}

\bib{Perelman02}{article}{
   author={Perelman, G.},
   title={The entropy formula for the Ricci flow and its geometric applications},
   journal={arXiv:math/0211159},
   date={2002},
}

\bib{Perelman03a}{article}{
   author={Perelman, G.},
   title={Ricci flow with surgery on three-manifolds},
   journal={arXiv:math/0303109},
   date={2003},
}

\bib{Perelman03b}{article}{
   author={Perelman, G.},
   title={Finite extinction time for the solutions to the Ricci flow on certain three-manifolds},
   journal={arXiv:math/0307245},
   date={2003},
}

\bib{Ros-Urbano98}{article}{
   author={Ros, A.},
   author={Urbano, F.},
   title={Lagrangian submanifolds of ${\bf C}^n$ with conformal Maslov
   form and the Whitney sphere},
   journal={J. Math. Soc. Japan},
   volume={50},
   date={1998},
   number={1},
   pages={203--226},
}

\bib{Schoen-Wolfson01}{article}{
   author={Schoen, R.},
   author={Wolfson, J.},
   title={Minimizing area among Lagrangian surfaces: the mapping problem},
   journal={J. Differential Geom.},
   volume={58},
   date={2001},
   number={1},
   pages={1--86},
}

\bib{Schoen-Wolfson03}{article}{
   author={Schoen, R.},
   author={Wolfson, J.},
   title={Mean curvature flow and Lagrangian embeddings},
   journal={Preprint},
   date={2003},
}

\bib{Smoczyk96}{article}{
   author={Smoczyk, K.},
   title={A canonical way to deform a Lagrangian submanifold},
   journal={arXiv:dg-ga/9605005},
   date={1996},
}

\bib{Smoczyk99a}{article}{
   author={Smoczyk, K.},
   title={Harnack inequality for the Lagrangian mean curvature flow},
   journal={Calc. Var. Partial Differential Equations},
   volume={8},
   date={1999},
   number={3},
   pages={247--258},
}

\bib{Smoczyk99}{article}{
   author={Smoczyk, K.},
   title={The Lagrangian mean curvature flow (Der Lagrangesche mittlere {Kr\"um\-mungs\-flu\ss}},
   journal={Leipzig: Univ. Leipzig (Habil.), 102 S.},
   date={2000},
}

\bib{Smoczyk02}{article}{
   author={Smoczyk, K.},
   title={Angle theorems for the Lagrangian mean curvature flow},
   journal={Math. Z.},
   volume={240},
   date={2002},
   number={4},
   pages={849--883},
}

\bib{Smoczyk04}{article}{
   author={Smoczyk, K.},
   title={Longtime existence of the Lagrangian mean curvature flow},
   journal={Calc. Var. Partial Differential Equations},
   volume={20},
   date={2004},
   number={1},
   pages={25--46},
}

\bib{Smoczyk05}{article}{
   author={Smoczyk, K.},
   title={Self-shrinkers of the mean curvature flow in arbitrary
   codimension},
   journal={Int. Math. Res. Not.},
   date={2005},
   number={48},
   pages={2983--3004},
}

\bib{Smoczyk-Wang02}{article}{
   author={Smoczyk, K.},
   author={Wang, M.-T.},
   title={Mean curvature flows of Lagrangians submanifolds with convex
   potentials},
   journal={J. Differential Geom.},
   volume={62},
   date={2002},
   number={2},
   pages={243--257},
}

\bib{Smoczyk-Wang09}{article}{
   author={Smoczyk, K.},
   author={Wang, M.-T.},
   title={Generalized Lagrangian mean curvature flows in symplectic manifolds},
   journal={Asian J. Math.},
   volume={15},
   date={2011},
   number={1},
   pages={129--140},
}

\bib{Stavrou98}{article}{
   author={Stavrou, N.},
   title={Selfsimilar solutions to the mean curvature flow},
   journal={J. Reine Angew. Math.},
   volume={499},
   date={1998},
   pages={189--198},
}

\bib{Stone}{article}{
   author={Stone, A.},
   title={A density function and the structure of singularities of the mean
   curvature flow},
   journal={Calc. Var. Partial Differential Equations},
   volume={2},
   date={1994},
   number={4},
   pages={443--480},
}

\bib{Temam}{article}{
   author={Temam, R.},
   title={Applications de l'analyse convexe au calcul des variations},
   language={French},
   conference={
      title={Nonlinear operators and the calculus of variations (Summer
      School, Univ. Libre Bruxelles, Brussels, 1975)},
   },
   book={
      publisher={Springer},
      place={Berlin},
   },
   date={1976},
   pages={208--237. Lecture Notes in Math., Vol. 543},
}

\bib{Thomas-Yau02}{article}{
   author={Thomas, R. P.},
   author={Yau, S.-T.},
   title={Special Lagrangians, stable bundles and mean curvature flow},
   journal={Comm. Anal. Geom.},
   volume={10},
   date={2002},
   number={5},
   pages={1075--1113},
}

\bib{Tsui-Wang04}{article}{
   author={Tsui, M.-P.},
   author={Wang, M.-T.},
   title={Mean curvature flows and isotopy of maps between spheres},
   journal={Comm. Pure Appl. Math.},
   volume={57},
   date={2004},
   number={8},
   pages={1110--1126},
}

\bib{Wang01}{article}{
   author={Wang, M.-T.},
   title={Mean curvature flow of surfaces in Einstein four-manifolds},
   journal={J. Differential Geom.},
   volume={57},
   date={2001},
   number={2},
   pages={301--338},

}

\bib{Wang01b}{article}{
   author={Wang, M.-T.},
   title={Deforming area preserving diffeomorphism of surfaces by mean
   curvature flow},
   journal={Math. Res. Lett.},
   volume={8},
   date={2001},
   number={5-6},
   pages={651--661},
}

\bib{Wang02}{article}{
   author={Wang, M.-T.},
   title={Long time existence and convergence of graphic mean curvature flow
   in arbitrary codimension},
   journal={Invent. Math.},
   volume={148},
   date={2002},
   number={3},
   pages={525--543},
}

\bib{Wang03}{article}{
   author={Wang, M.-T.},
   title={Gauss maps of the mean curvature flow},
   journal={Math. Res. Lett.},
   volume={10},
   date={2003},
   number={2-3},
   pages={287--299},
}

\bib{Wang04-2}{article}{
   author={Wang, M.-T.},
   title={The mean curvature flow smoothes Lipschitz submanifolds},
   journal={Comm. Anal. Geom.},
   volume={12},
   date={2004},
   number={3},
   pages={581--599},
}

\bib{Wang05}{article}{
   author={Wang, M.-T.},
   title={Subsets of Grassmannians preserved by mean curvature flows},
   journal={Comm. Anal. Geom.},
   volume={13},
   date={2005},
   number={5},
   pages={981--998},
}

\bib{Wang08}{article}{
   author={Wang, M.-T.},
   title={A convergence result of the Lagrangian mean curvature flow},
   conference={
      title={Third International Congress of Chinese Mathematicians. Part 1,
      2},
   },
   book={
      series={AMS/IP Stud. Adv. Math., 42, pt. 1},
      volume={2},
      publisher={Amer. Math. Soc.},
      place={Providence, RI},
   },
   date={2008},
   pages={291--295},
}

\bib{Wang08b}{article}{
   author={Wang, M.-T.},
   title={Lectures on mean curvature flows in higher codimensions},
   conference={
      title={Handbook of geometric analysis. No. 1},
   },
   book={
      series={Adv. Lect. Math. (ALM)},
      volume={7},
      publisher={Int. Press, Somerville, MA},
   },
   date={2008},
   pages={525--543},
}

\bib{White05}{article}{
   author={White, B.},
   title={A local regularity theorem for mean curvature flow},
   journal={Ann. of Math. (2)},
   volume={161},
   date={2005},
   number={3},
   pages={1487--1519},
}

\bib{White09}{article}{
   author={White, B.},
   title={Currents and flat chains associated to varifolds, with an
   application to mean curvature flow},
   journal={Duke Math. J.},
   volume={148},
   date={2009},
   number={1},
   pages={41--62},
}

\bib{Xin08}{article}{
   author={Xin, Y.},
   title={Mean curvature flow with convex Gauss image},
   journal={Chin. Ann. Math. Ser. B},
   volume={29},
   date={2008},
   number={2},
   pages={121--134},
}

\end{biblist}
\end{bibdiv}
\end{document}